\documentclass[12pt]{article}

\usepackage{inputenc}
\usepackage{amsmath}
\usepackage{amsfonts}
\usepackage{amssymb}
\usepackage[T1]{fontenc}
\usepackage{graphicx}
\usepackage[english]{babel}
\usepackage{mathrsfs}
\usepackage{amsthm}
\usepackage[all]{xy}
\usepackage{geometry}
\allowdisplaybreaks
\usepackage{color}

\geometry{hmargin=2cm,vmargin=2.25cm}

\title{ Properties of steady states for a class of non-local Fisher-KPP equations in disconnected domains }
\author{Alexis L\'{e}culier\footnote{Institut de Math\'{e}matiques de Toulouse; UMR 5219, Universit\'{e} de Toulouse; CNRS, UPS IMT, F-31062 Toulouse Cedex 9, France; E-mail:  Alexis.Leculier@math.univ-toulouse.fr}  and Jean-Michel Roquejoffre\footnote{Institut de Math\'{e}matiques de Toulouse; UMR 5219, Universit\'{e} de Toulouse; CNRS, UPS IMT, F-31062 Toulouse Cedex 9, France; E-mail:  Jean-Michel.Roquejoffre@math.univ-toulouse.fr} }

\begin{document}                     
\maketitle

\begin{abstract}

The question studied here is the existence and uniqueness of a non-trivial bounded steady state of a Fisher-KPP equation involving a fractional Laplacian $(-\Delta)^\alpha$ in a domain with Dirichlet conditions outside of the domain. More specifically, we investigate such questions in the case of general fragmented unbounded domains. Indeed, we take advantage of the non-local dispersion in order to provide analytic bounds (which depend only on the domain) on the steady states. Such results are relevant in biology. For instance, our results provide criteria on the domain for the subsistence of a species subject to a non-local diffusion in a fragmented area. These criteria primarily involve the sign of the first eigenvalue of the operator $(-\Delta)^\alpha - Id$ in a domain with Dirichlet conditions outside of the domain. To this end, we exhibit a result of continuity of this principal eigenvalue with respect to the distance between two compact patchs in the one dimensional case. The main novelty of this last result is the continuity up to the distance $0$.

\end{abstract}

\tableofcontents

\newtheorem{theorem}{Theorem}
\newtheorem{corollary}{Corollary}
\newtheorem{lemma}{Lemma}
\newtheorem{definition}{Definition}
\newtheorem{proposition}{Proposition}
\newtheorem{definition-proposition}{Definition-Proposition}
\newtheorem*{notation}{Notation}
\newtheorem*{remark}{Remark}
\newtheorem{example}{Example}

\section{Introduction}

\subsection{Model, question, motivation}
We  investigate here  existence and uniqueness of bounded positive solutions for the fractional Fisher-KPP equations of the form
\begin{equation}\label{stationary_state}
\text{ i.e. }\left\lbrace
\begin{aligned}
(-\Delta)^\alpha n_+(x) &= n_+(x)-n_+^2 (x) && \ \text{ for } x \in \Omega, \\
n_+(x) &= 0 											&& \ \text{ for } x \in\Omega^c.
\end{aligned}
\right.
\end{equation}
The domain $\Omega$ is an infinite union of patches, all of them but perhaps one being bounded.  The operator $(-\Delta)^\alpha$ is the fractional Laplacian:
\begin{equation}(-\Delta)^\alpha \phi (x)= C_{\alpha}\ PV \ \int_{\mathbb{R}^d} \frac{\phi(x)- \phi(y)}{|x-y|^{d+2\alpha}}dy \qquad   \text{ with } \  C_\alpha = \frac{4^\alpha \Gamma(\frac{d}{2} +\alpha )}{\pi^{\frac{d}{2}}|\Gamma(-\alpha)|},
\end{equation}
we assume $\alpha<1$ throughout the work. Clearly, existence and uniqueness would be false if $\alpha=1$ (just think of a periodic union of large line segments), but nonlocality implies a solidarity between patches that may make existence and uniqueness become true. We will first see some effects of the nonlocality when dealing with principal eigenvalue problems, we will then try to understand how the solidarity forced by nonlocal diffusion eventually leads to existence and uniqueness. More important in our opinion, we will take advantage of the disconnectedness of the domain to derive precise estimates of possible solutions of \eqref{stationary_state} at infinity, that will eventually imply uniqueness.
 
 \noindent The evolution equation
\begin{equation}\label{equation_with_time}
\left\lbrace
\begin{aligned}
\partial_t n (x,t) + (-\Delta)^\alpha n(x,t) &= n(x,t)-n(x,t)^2 && \ \text{ for } (x,t) \in \Omega \times ]0,+\infty[,\\
n(x,t) &= 0 											&& \ \text{ for } (x,t) \in \Omega^c \times [0, +\infty[,\\
n(x,0) &= n_0(x),
\end{aligned}
\right.
\end{equation}
models biological invasions. The variable $n$ stands for a density of population. The fractional Laplacian  models the fact that a species can jump from one point to another with a high rate. If a bounded solution of \eqref{equation_with_time} converges as $t$ tends to $+\infty$, it is either to $0$, either to a non-trivial stationary state of \eqref{equation_with_time} : a solution of \eqref{stationary_state}. Thus, if \eqref{stationary_state} does not admit a bounded positive solution, we deduce that the species modeled by $n$ will extinct. On the other hand, if there exists a unique bounded positive solution $n_+$ to \eqref{stationary_state} to which the solution $n$ converges to $n_+$ as $t$ tends to $+\infty$, the species will persist.  An equation of type \eqref{equation_with_time} was first introduced in 1937 by Fisher in \cite{Fisher} and by Kolmogorov, Petrovskii and Piscunov in \cite{KPP1937} in the whole domain $\mathbb{R}^d$ and with a standard diffusion. 

\noindent Our model accounts for a situation where reproduction is allowed on some patches (perhaps an infinity of them) while the outside environment is lethal to the species. Moreover, the patches may be, individually, unfavourable to reproduction, and this may be true for ll but one of them. The question is whether the species may survive in these conditions, and if such is the case, in which quantity. Of course this can only be possible because of nonlocal diffusion, survival being clearly impossible in the conditions just described, if the diffusion is the usual one. One of the main result of this work is that there is survival but if all the patches but one are individually unfavourable, the density of individuals will decay like a power of the distance to the favourable patch that we evaluate precisely.

\noindent Before that, we account for specfic effects of the nonlocal diffusion for simple one-dimensional domains.

\subsection{Some effects of nonlocality on the principal eigenvalue}\label{evsection}
We consider a domain made up of two patches of variable distance, and we wonder  how the principal eigenvalue is affected by the distance between the patches. In  particular, we ask whether it is  continuous with respect to this distance. This question is of course irrelevant if the fractional Laplacian is replaced by the usual Laplacian, as both domains have a principal eigenvalue of their own. However, in the nonlocal case, there is a solidarity between the patches and the question becomes relevant. In the one dimensional case, we give a positive answer especially when the distance tends to $0$. 
Our domain is of the form
\begin{equation}\label{Omgega12mu}
    \Omega_{1,2,\mu}=\Omega_1 \cup \Omega_2 \quad  \text{ and } \quad  \mathrm{dist}
(\Omega_1, \Omega_2) = \inf \left\lbrace |x_1- x_2| \text{ with } x_1 \in \Omega_1, \ x_2 \in \Omega_2 \right\rbrace = 2 \mu.
\end{equation}
\begin{notation}
For any smooth bounded set $\mathcal{O}$, let $\xi_\alpha ( \mathcal{O})$ be the principal eigenvalue of the fractional Laplacian with an exponent $\alpha$ with Dirichlet conditions outside the domain $\mathcal{O}$
\begin{equation}
\text{i.e.} \quad \left\lbrace 
\begin{aligned}
& (-\Delta)^\alpha \phi_\alpha = \xi_\alpha(\mathcal{O})\phi_\alpha && \text{ in } \mathcal{O}, \\
& \phi_\alpha  = 0 && \text{ in  }\mathcal{O}^c.
\end{aligned}
\right.
\end{equation}
\end{notation}
\noindent The existence of such eigenvalue is ensured by the Krein-Rutman Theorem. 
\noindent In this part, we will adopt the following notation:
\[\underline{\xi}_{i,\alpha} = \xi_\alpha (\Omega_i). \]
We denote by $\phi_{1,2,\mu, \alpha}$ and $\underline{\phi}_{i,\alpha}$ the eigenfunctions associated respectively to $\xi_{\alpha}(\Omega_{1,2,\mu})$ and $\underline{\xi}_{i,\alpha}$.
\subsubsection{The main continuity result}
As previously said, a two-piece domain has a first principal eigenvalue, and that this eigenvalue is continuous under the mutual distance of the two pieces is intuitively obvious, as soon as they remain far apart. When they are put together, continuity still holds: this result has of course no equivalent in the case of the standard Laplacian. Here is the precise statement.
\begin{theorem}\label{theorem_3}
Under the previous assumptions, the function $(\mu \in ]0, +\infty[  \mapsto \xi_\alpha (\Omega_{1,2,\mu }) )$ is increasing and continuous. Moreover, it is continuous up to $\mu=0$ and 
\[ \xi_{\alpha }( \Omega_{1,2,0} ) = \xi_\alpha ( ]0, |\Omega_1| + |\Omega_2|[ ) .\]
\end{theorem}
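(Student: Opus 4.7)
The plan is to exploit an algebraic decomposition of the Dirichlet form that isolates the $\mu$-dependence in a single cross term, and to combine it with monotone convergence and the Rayleigh characterisation of $\xi_\alpha$. Place the patches as $\Omega_1 = ]0, L_1[$ and $\Omega_2(\mu) = ]L_1 + 2\mu, L_1 + L_2 + 2\mu[$, so that $\Omega_{1,2,0} = ]0, L_1 + L_2[$; write $\mathcal{E}_\mu(\phi)$ for the Gagliardo form of $\phi \in H^\alpha_0(\Omega_{1,2,\mu})$. Splitting $\mathcal{E}_\mu(\phi)$ over $\Omega_1, \Omega_2(\mu), \Omega_{1,2,\mu}^c$ and using the additivity $\int_{\Omega_2(\mu)} |x-y|^{-1-2\alpha}dy + \int_{\Omega_{1,2,\mu}^c} |x-y|^{-1-2\alpha}dy = \int_{\Omega_1^c} |x-y|^{-1-2\alpha}dy =: w_1(x)$ for $x \in \Omega_1$ (and symmetrically $w_2^{(\mu)}$ for $y \in \Omega_2(\mu)$), one obtains
\begin{align*}
\mathcal{E}_\mu(\phi) = {} & \tfrac{C_\alpha}{2} \iint_{\Omega_1^2} \tfrac{|\phi(x) - \phi(y)|^2}{|x-y|^{1+2\alpha}} + \tfrac{C_\alpha}{2} \iint_{\Omega_2(\mu)^2} \tfrac{|\phi(x) - \phi(y)|^2}{|x-y|^{1+2\alpha}} \\
& + C_\alpha \int_{\Omega_1} \phi^2 w_1 + C_\alpha \int_{\Omega_2(\mu)} \phi^2 w_2^{(\mu)} - 2 C_\alpha \iint_{\Omega_1 \times \Omega_2(\mu)} \tfrac{\phi(x)\phi(y)}{|x-y|^{1+2\alpha}}.
\end{align*}
Under a rigid translation of $\Omega_2$, the first four terms are \emph{invariant}, while only the last cross term depends on $\mu$ (through its kernel).

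From this, monotonicity is immediate: for $\mu < \mu'$, translating the normalised eigenfunction $\phi_{\mu'}$ back onto $\Omega_{1,2,\mu}$ preserves the invariant part and makes the cross term strictly more negative, so the Rayleigh quotient strictly drops, giving $\xi_\alpha(\mu) < \xi_\alpha(\mu')$. Continuity at each $\mu_0 > 0$ then follows from dominated convergence on the same cross term, whose kernel is bounded by $(2\mu_0)^{-1-2\alpha}$ on a small neighbourhood of $\mu_0$.

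For continuity at $\mu = 0$, the lower bound $\xi_\alpha(]0, L_1 + L_2[) \leq \xi_\alpha(\mu)$ holds for every $\mu > 0$ from the set inclusion $H^\alpha_0(\Omega_{1,2,\mu}) \subset H^\alpha_0(]0, L_1 + L_2[)$ with unchanged Dirichlet form. For the matching upper bound, take the normalised principal eigenfunction $\phi_0$ of $]0, L_1 + L_2[$ and build a test function $\tilde\phi_\mu \in H^\alpha_0(\Omega_{1,2,\mu})$ by setting $\tilde\phi_\mu = \phi_0$ on $\Omega_1$ and $\tilde\phi_\mu(y) = \phi_0(y - 2\mu)$ on $\Omega_2(\mu)$. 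Applying the identity both to $\mathcal{E}_\mu(\tilde\phi_\mu)$ and to $\mathcal{E}_0(\phi_0)$ (the latter viewed with the artificial split at $L_1$), all four invariant terms match exactly after translation, and the difference of cross terms reduces to
\[
2 C_\alpha \iint_{]0, L_1[ \times ]L_1, L_1 + L_2[} \phi_0(x) \phi_0(z) \Bigl[ (z - x)^{-1-2\alpha} - (z + 2\mu - x)^{-1-2\alpha} \Bigr] dx\, dz,
\]
a non-negative integrand that decreases monotonically to $0$ as $\mu \downarrow 0$. Since the limit integral is a finite piece of the Dirichlet form $\mathcal{E}_0(\phi_0)$, monotone convergence gives $\mathcal{E}_\mu(\tilde\phi_\mu) \to \mathcal{E}_0(\phi_0)$, whence $\xi_\alpha(\mu) \leq \mathcal{E}_\mu(\tilde\phi_\mu) / \|\phi_0\|_{L^2}^2 \to \xi_\alpha(]0, L_1 + L_2[)$, closing the circle.

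The main obstacle is ensuring that every piece of the decomposition is individually finite, so that the termwise monotone convergence step is rigorous. Because $L_1$ is an \emph{interior} point of the merged interval and $\phi_0(L_1) > 0$, standalone singular estimates at the junction are delicate; the resolution is structural, recognising the relevant cross quantity as an intrinsic piece of the finite Dirichlet form $\mathcal{E}_0(\phi_0)$ rather than a standalone integral. Finiteness of the decomposition for $\mu > 0$ is in turn secured by the boundary behaviour $\phi_\mu \sim \mathrm{dist}(\cdot, \partial\Omega_{1,2,\mu})^\alpha$ of each two-patch eigenfunction, which controls the interaction with the Riesz potentials $w_i$ near the endpoints.
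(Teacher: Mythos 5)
Your monotonicity argument and your continuity argument at $\mu_0>0$ are sound and close in spirit to the paper's: the decomposition isolating the translation-invariant part of the Gagliardo form from the single cross term $-2C_\alpha\iint_{\Omega_1\times\Omega_2}\phi(x)\phi(y)|x-y|^{-1-2\alpha}$ is the same mechanism the paper implements with the truncated kernel $k_\varepsilon$ (the truncation being their way of guaranteeing termwise finiteness for arbitrary test functions, where you instead invoke the $\delta^\alpha$ boundary behaviour of the eigenfunction), and your dominated-convergence route to continuity for $\mu>0$ is a legitimate, more elementary alternative to the paper's compactness/uniqueness argument. A small slip: $H^\alpha_0(\Omega_{1,2,\mu})\not\subset H^\alpha_0(]0,L_1+L_2[)$, since the second patch protrudes past $L_1+L_2$; the lower bound $\xi_\alpha(]0,L_1+L_2[)\le\xi_\alpha(\Omega_{1,2,\mu})$ should instead be read off from the monotonicity you have just proved (take $\mu_1=0$ there).

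The genuine gap is the upper bound at $\mu=0$ when $\alpha\ge\frac12$. The test function $\tilde\phi_\mu$, obtained by cutting $\phi_0$ at the interior point $L_1$ and translating the right half, has a jump of height $\phi_0(L_1)>0$ at $L_1$ (and at $L_1+2\mu$), and a function with a jump discontinuity has infinite Gagliardo seminorm once $\alpha\ge\frac12$. Concretely,
\[
[\tilde\phi_\mu]_\alpha^2\ \ge\ c\,\Bigl(\inf_{]L_1-\epsilon,L_1[}\phi_0\Bigr)^2\int_0^{2\mu}\!\!\int_0^{\epsilon}\frac{ds\,dt}{(s+t)^{1+2\alpha}}\ =\ +\infty\qquad\bigl(\alpha\ge\tfrac12\bigr),
\]
this divergent contribution coming from the interaction of $\tilde\phi_\mu|_{\Omega_1}$ with the gap $]L_1,L_1+2\mu[$ where $\tilde\phi_\mu\equiv0$. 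Hence $\tilde\phi_\mu\notin H^\alpha_0(\Omega_{1,2,\mu})$ and the inequality $\xi_\alpha(\mu)\le\mathcal{E}_\mu(\tilde\phi_\mu)/\|\phi_0\|_{2}^2$ is vacuous. Your proposed ``structural'' resolution does not address this: the problem is not an $\infty-\infty$ bookkeeping artefact inside $\mathcal{E}_0(\phi_0)$ that cancels in the difference of cross terms, but a genuinely new divergent term of $\mathcal{E}_\mu(\tilde\phi_\mu)$ against the gap, which has no counterpart in $\mathcal{E}_0(\phi_0)$. Repairing it would require mollifying $\tilde\phi_\mu$ so that it vanishes like $\mathrm{dist}^\alpha$ at the newly created endpoints and then proving that the added energy tends to $0$ as $\mu\downarrow0$; that estimate is precisely the hard content of the theorem, and it is what the paper establishes by showing that the contact point is a removable singularity for the Caffarelli--Silvestre extension. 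For $\alpha<\frac12$ your construction does work (every term you write is finite and the monotone-convergence step is legitimate), consistent with the paper's remark that this case follows from the density in $H^\alpha_0$ of functions vanishing at the contact point.
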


A first ingredient for the proof of Theorems \ref{theorem_3} and the monotonicity of $(\mu \mapsto \lambda_\alpha (\Omega_{1,2,\mu }))$ is  the Rayleigh quotient
\begin{equation}\label{def_R}
R_\alpha (\phi) = \frac{ \int_{\mathbb{R}} (-\Delta)^\alpha \phi(x) \phi(x)  }{\int_\mathbb{R} \phi^2(x)dx } = \frac{ \int_{\mathbb{R}} \frac{1}{2} \int_\mathbb{R} \frac{(\phi(x) - \phi(y))^2}{|x-y|^{1+2\alpha}}dy  }{\int_\mathbb{R} \phi^2(x)dx } := \frac{[\phi]_\alpha }{\|\phi \|^2_2}.
\end{equation}
We will evaluate $R_\alpha$ in the following spaces, for a general set $\mathcal{O}$: 
\begin{equation} \label{def_H_alpha_0}
H^\alpha_0 ( \mathcal{O} ) = \left\lbrace \phi \in L^{2}(\mathcal{O}) \text{ such that }[\phi]_\alpha < +\infty\text{ and } \phi_{|\mathbb{R} \backslash \mathcal{O}} = 0 \right\rbrace.
\end{equation}
The link between the Rayleigh quotient and the principal eigenvalues is the following : 
\begin{equation}
\xi_\alpha (\Omega_{1,2,\mu }) = \underset{\phi \in H^\alpha_0(\Omega_{1,2,\mu})\backslash \left\lbrace 0 \right\rbrace }{\min} R_\alpha(\phi) = R_\alpha (\phi_{1,2,\mu,\alpha})  \   \text{ and }  \   \underline{\xi}_{i,\alpha} = \underset{\phi \in H^\alpha_0(\Omega_i)\backslash \left\lbrace 0 \right\rbrace }{\min} R_\alpha(\phi) = R_\alpha (\underline{\phi}_{i,\alpha}).
\end{equation}
The proof of the continuity of $\xi_\alpha (\Omega_{1,2,\mu })$ with respect to $\mu$ when $\mu>0$ is a consequence of standard uniqueness/compactness arguments. The continuity at $\mu = 0$ is more involved, especially when $\alpha \geq \frac{1}{2}$. In this case, we have to prove that the contact point between the two domains $\Omega_1$ and $\Omega_2$ becomes a removable singularity. To achieve this result, we will use the extension on the upper half plane of the fractional Laplacian introduced by Caffarelli and Silvestre in \cite{Caffarelli:Silvestre:Extension}. For the case $\alpha < \frac{1}{2}$, the case $\mu = 0$ can be treated as the case $\mu >0$ thanks to the density of the set of functions satisfying $\phi = 0$ (see \cite{tartar}). We emphasise that the demonstration holds true up to $\mu = 0$ because we work in a one dimensional space. Indeed, in one dimension, there is only one way to connect two intervals. For $d>2$, the result holds true for distances $\mu>0$, with no real modification.
\subsubsection{ The limit $\alpha\to1$: Consequences of Theorem \ref{theorem_3} }

 It is well known (see \cite{Hitchhiker}) that for a smooth function $n$ and for all $x \in \mathbb{R}$, the function $\alpha\mapsto(-\Delta)^\alpha n(x)$ is continuous, and that
\[(-\Delta)^\alpha n(x) \underset{\alpha \rightarrow 1^-}{\longrightarrow} (-\Delta) n (x).\]
The standard compactness/uniqueness argument yields  the continuity of the function $(\alpha \in ]0,1[ \rightarrow \xi_\alpha (\Omega_{1,2,\mu }))$. The next proposition describes, in our specific case, the dynamics of $ \xi_{\alpha}(\Omega_{1,2,\mu})$ when $\alpha$ tends to $1$.
\begin{proposition}\label{theorem_4}
The function $\alpha \in ]0,1[ \mapsto \xi_\alpha(\Omega_{1,2,\mu})$ is continuous up to $\alpha=1$ with 
\begin{equation}
\xi_1(\Omega_{1,2,\mu}) = \min \ (\underline{\xi}_1, \underline{\xi}_2).
\end{equation}
\end{proposition}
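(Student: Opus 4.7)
The plan is to establish the two matching inequalities $\limsup_{\alpha\to 1^-}\xi_\alpha(\Omega_{1,2,\mu})\le \min(\underline{\xi}_1,\underline{\xi}_2)$ and $\liminf_{\alpha\to 1^-}\xi_\alpha(\Omega_{1,2,\mu})\ge \min(\underline{\xi}_1,\underline{\xi}_2)$ by combining the Rayleigh quotient characterization of $\xi_\alpha$ with the Bourgain--Brezis--Mironescu (BBM) asymptotics for Gagliardo seminorms. Assume without loss of generality that $\underline{\xi}_1 \le \underline{\xi}_2$, and write $\underline{\phi}_1$ for the classical Dirichlet eigenfunction of $-\Delta$ on $\Omega_1$, extended by zero outside.

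\textbf{Upper bound.} Since $\underline{\phi}_1\in H^1_0(\Omega_1)$ and $\Omega_1$ is smooth and bounded, the zero extension lies in $H^\alpha_0(\Omega_{1,2,\mu})$ for every $\alpha<1$, and the variational principle yields $\xi_\alpha(\Omega_{1,2,\mu})\le R_\alpha(\underline{\phi}_1)$. The BBM formula, with the normalizing constant $C_\alpha$ (which vanishes like $1-\alpha$) absorbed into $[\cdot]_\alpha$, gives $R_\alpha(\underline{\phi}_1)\to R_1(\underline{\phi}_1)=\underline{\xi}_1$ as $\alpha\to 1^-$, controlling the limsup.

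\textbf{Lower bound and conclusion.} Let $\phi_\alpha$ denote the normalized principal eigenfunction, $\|\phi_\alpha\|_2=1$ and $R_\alpha(\phi_\alpha)=\xi_\alpha(\Omega_{1,2,\mu})$. The upper bound yields a uniform bound on $C_\alpha[\phi_\alpha]_\alpha$, and all $\phi_\alpha$ are supported in the fixed bounded set $\Omega_{1,2,\mu}$. I would then invoke the BBM-type compactness theorem of Ponce to extract a subsequence $\phi_{\alpha_n}$ converging strongly in $L^2(\mathbb{R})$ to some $\phi_\ast \in H^1_0(\Omega_{1,2,\mu})$ with $\|\phi_\ast\|_2=1$, along with the liminf inequality $\int_\mathbb{R}|\nabla \phi_\ast|^2 \le \liminf_n \xi_{\alpha_n}$. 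Exploiting $\mathrm{dist}(\Omega_1,\Omega_2)=2\mu>0$, one decomposes $\phi_\ast=\phi_{\ast,1}+\phi_{\ast,2}$ with $\phi_{\ast,i}\in H^1_0(\Omega_i)$ having disjoint supports, and the Rayleigh characterization of each $\underline{\xi}_i$ yields
\[ \int_\mathbb{R}|\nabla \phi_\ast|^2 = \sum_{i=1}^2 \int_{\Omega_i}|\nabla \phi_{\ast,i}|^2 \ge \sum_{i=1}^2 \underline{\xi}_i \|\phi_{\ast,i}\|_2^2 \ge \min(\underline{\xi}_1,\underline{\xi}_2)\bigl(\|\phi_{\ast,1}\|_2^2+\|\phi_{\ast,2}\|_2^2\bigr)=\min(\underline{\xi}_1,\underline{\xi}_2), \]
matching the upper bound and identifying the limit.

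The main obstacle is the $L^2$ compactness of $(\phi_\alpha)_{\alpha\to 1^-}$: the bound on $C_\alpha[\phi_\alpha]_\alpha$ does not translate into a uniform bound in any fixed $H^s$, so Rellich--Kondrachov is not directly applicable, and one must rely on a BBM--Ponce compactness result tailored to an exponent sequence $\alpha_n\to 1^-$. Once that is in place, the algebraic decomposition step is elementary thanks to the positive gap between the patches.
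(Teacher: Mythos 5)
Your proof is correct, and it meets the paper's argument at two points --- the upper bound is obtained in both cases by testing the Rayleigh quotient with the Dirichlet--Laplacian eigenfunction of the better patch, and both arguments hinge on $L^2$-compactness of the normalized eigenfunctions --- but the identification of the limit is genuinely different. The paper first proves that $\alpha\mapsto \mathrm{diam}(\Omega_{1,2,\mu})^{2\alpha}\xi_\alpha(\Omega_{1,2,\mu})$ is increasing, which yields the quantitative bound $\xi_\alpha\le \mathrm{diam}(\Omega_{1,2,\mu})^{2-2\alpha}\min(\underline{\xi}_1,\underline{\xi}_2)$ (slightly more than your soft $\limsup$ estimate), and then passes to the limit in the eigenvalue equation itself: the $L^2$-limit $\phi_\infty$ is shown to be a nonnegative weak, hence strong, eigenfunction of $-\Delta$ on $\Omega_{1,2,\mu}$, and the eigenvalue is identified through positivity and uniqueness of the principal eigenfunction. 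You replace this PDE step by the $\Gamma$-liminf inequality together with the decomposition of $\phi_\ast$ over the two components and the variational characterization of each $\underline{\xi}_i$; this is arguably cleaner, since on a disconnected domain the ``positive principal eigenfunction'' argument is delicate (the minimizer vanishes identically on the non-optimal component), and your route sidesteps it entirely. As for the obstacle you flag: invoking Ponce's compactness theorem is legitimate, but it is not needed here. Because $C_\alpha$ is precisely the constant making the Fourier symbol of $(-\Delta)^\alpha$ equal to $|\xi|^{2\alpha}$, one has $C_\alpha[\phi_\alpha]_\alpha=\int_{\mathbb{R}}|\xi|^{2\alpha}|\widehat{\phi_\alpha}(\xi)|^2d\xi$, and $|\xi|^{2\alpha}\ge|\xi|$ for $|\xi|\ge1$ and $\alpha\ge\frac{1}{2}$; hence the uniform bound on $\xi_\alpha=C_\alpha[\phi_\alpha]_\alpha$ together with $\|\phi_\alpha\|_2=1$ gives a uniform $H^{1/2}(\mathbb{R})$ bound, and Rellich--Kondrachov on the fixed bounded support yields the $L^2$-compactness --- this is essentially what the paper does by restricting to $\alpha_k>\frac{1}{2}$ and citing the Sobolev embedding. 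The same Plancherel identity combined with Fatou's lemma then gives your liminf inequality directly.
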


\noindent We now focus on the dynamics of $\xi_{\alpha}(\Omega_{1,2,\mu})$ when $( \mu, \alpha)$ converges both to $(0,1)$. There is a competition between the non-local character which becomes "less important" as $\alpha$ tends to $1$ and the requirement of this non-local character which becomes also "less important" as $\mu$ tends to $0$. Theorem \ref{theorem_3} and Proposition \ref{theorem_4} imply the following statement, which has once again no equivalent in the case of local diffusion:

\begin{theorem}\label{Propo_alpha_mu}
Under the previous hypothesis, for all $\xi_* \in [\xi_{1}(\Omega_{1,2,0}), \min (\underline{\xi}_{1,1}, \underline{\xi}_{2,1})]$, there exists a sequence $( \mu_k, \alpha_{k})_{k \in \mathbb{N}}$ such that 
\begin{equation}\label{lambdastar}
(\mu_k, \alpha_{k}) \underset{k \rightarrow +\infty}{\longrightarrow} (0,1) \ \text{ and } \ \xi_{\alpha_k}(\Omega_{1,2,\mu_k}) \underset{k \rightarrow +\infty}{\longrightarrow} \xi_*.
\end{equation}
\end{theorem}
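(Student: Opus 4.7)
The plan is to interpolate between the two boundary behaviours established so far: Theorem \ref{theorem_3} controls the map $\mu\mapsto\xi_\alpha(\Omega_{1,2,\mu})$ for fixed $\alpha$, with value $\xi_\alpha(\,]0,|\Omega_1|+|\Omega_2|[\,)$ at $\mu=0$, while Proposition \ref{theorem_4} controls $\alpha\mapsto\xi_\alpha(\Omega_{1,2,\mu})$ for fixed $\mu>0$, with value $\min(\underline{\xi}_{1,1},\underline{\xi}_{2,1})$ at $\alpha=1$. The key point is that these two limits at the corner $(\mu,\alpha)=(0,1)$ differ, and one expects every intermediate value to be reached by a suitable diagonal approach. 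Write $g_\alpha(\mu):=\xi_\alpha(\Omega_{1,2,\mu})$ for brevity; by Theorem \ref{theorem_3}, $g_\alpha$ is continuous and strictly increasing on $[0,+\infty[$, and by the standard compactness/uniqueness argument the single-interval eigenvalue $\xi_\alpha(\,]0,|\Omega_1|+|\Omega_2|[\,)$ depends continuously on $\alpha$ up to $\alpha=1$, with limit $\xi_1(\Omega_{1,2,0})$.

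For $\xi_*\in\,]\xi_1(\Omega_{1,2,0}),\min(\underline{\xi}_{1,1},\underline{\xi}_{2,1})[\,$ (the interior case), fix any sequence $\alpha_k\to 1^-$. Then $g_{\alpha_k}(0)\to\xi_1(\Omega_{1,2,0})<\xi_*$, and for any $\eta>0$, Proposition \ref{theorem_4} yields $g_{\alpha_k}(\eta)\to\min(\underline{\xi}_{1,1},\underline{\xi}_{2,1})>\xi_*$. For $k$ large one thus has $g_{\alpha_k}(0)<\xi_*<g_{\alpha_k}(\eta)$, and since $g_{\alpha_k}$ is continuous and strictly increasing, the intermediate value theorem produces a unique $\mu_k\in\,]0,\eta[\,$ with $g_{\alpha_k}(\mu_k)=\xi_*$. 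As $\eta>0$ was arbitrary, the same argument shows that for every $\eta>0$ one has $\mu_k<\eta$ eventually, so $\mu_k\to 0^+$.

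The two endpoints are handled by routine diagonal extractions. For $\xi_*=\xi_1(\Omega_{1,2,0})$, take $\alpha_k\to 1^-$ so that $g_{\alpha_k}(0)\to\xi_*$, then use the continuity of $g_{\alpha_k}$ at $\mu=0$ from Theorem \ref{theorem_3} to pick $\mu_k\in\,]0,1/k[\,$ with $|g_{\alpha_k}(\mu_k)-g_{\alpha_k}(0)|\leq 1/k$. For $\xi_*=\min(\underline{\xi}_{1,1},\underline{\xi}_{2,1})$, set $\mu_k=1/k$ and use Proposition \ref{theorem_4} to pick $\alpha_k$ close enough to $1$ so that $|g_{\alpha_k}(\mu_k)-\xi_*|\leq 1/k$. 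In both cases $(\mu_k,\alpha_k)\to(0,1)$ and $g_{\alpha_k}(\mu_k)\to\xi_*$.

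The only genuinely delicate point is showing $\mu_k\to 0$ in the interior case; everything else is elementary topology once Theorem \ref{theorem_3} and Proposition \ref{theorem_4} are in hand. That step relies decisively on Proposition \ref{theorem_4}: for any positive distance $\eta$, the eigenvalue $g_\alpha(\eta)$ already overshoots the target as $\alpha\to 1^-$, which, combined with strict monotonicity in $\mu$, forces the level set $\{g_\alpha=\xi_*\}$ down to arbitrarily small distances.
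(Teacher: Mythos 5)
Your argument is correct, and in the interior case it is noticeably cleaner than the paper's. The paper's proof of the interior case builds a rather intricate sequence indexed in triples: $(\mu_{3k},\alpha_{3k})$ drives the eigenvalue toward the upper endpoint via Proposition \ref{theorem_4}, $(\mu_{3k+2},\alpha_{3k+2})$ drives it toward the lower endpoint via Theorem \ref{theorem_3}, and the intermediate index $3k+1$ is pinned down by the intermediate value theorem applied to $\mu\mapsto\xi_{\alpha_{3k}}(\Omega_{1,2,\mu})$ on $[\mu_{3k+2},\mu_{3k}]$. You instead fix the sequence $\alpha_k\to 1^-$ once and for all, observe that $g_{\alpha_k}(0)\to\xi_1(\Omega_{1,2,0})<\xi_*$ while $g_{\alpha_k}(\eta)\to\min(\underline{\xi}_{1,1},\underline{\xi}_{2,1})>\xi_*$ for every fixed $\eta>0$, and let monotonicity plus the intermediate value theorem produce a unique $\mu_k$ with $g_{\alpha_k}(\mu_k)=\xi_*$; the decisive observation that $\mu_k\to 0$ follows because the overshoot at any fixed $\eta>0$ forces the level set below $\eta$ eventually. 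This buys transparency: the sequence is defined in one shot rather than by an interlocking recursion, and the reason $\mu_k\to 0$ is laid bare. The two endpoint cases are handled identically to the paper. One small point worth flagging: both you and the paper rely implicitly on the continuity of $\alpha\mapsto\xi_\alpha(\,]0,|\Omega_1|+|\Omega_2|[\,)$ up to $\alpha=1$ for the single interval, which is not literally what Proposition \ref{theorem_4} states (that concerns the two-piece domain), but the same compactness/uniqueness proof applies verbatim to a connected domain, so this is a benign borrowed ingredient rather than a gap.
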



\subsection{Existence and uniqueness of a steady  solution to \eqref{stationary_state}}



\subsubsection{Notations and assumptions on $\Omega$}
Before giving the hypothesis and the results, we introduce some notations which will be used all along the article. Let $\mathcal{O}$ be a general smooth domain of $\mathbb{R}^d$, $x$ be a point of $\mathbb{R}^d$ and $\nu$ be a positive constant. Then we define the sets:
\begin{equation}\label{sets}
\mathcal{O} + x = \left\lbrace y \in \mathbb{R}^d \text{ such that } y-x \in \mathcal{O}\right\rbrace \  \text{ and } \ \mathcal{O}_\nu = \left\lbrace y \in \mathcal{O} \text{ such that } \mathrm{dist}(y, \partial \mathcal{O}) > \nu \right\rbrace
\end{equation}
where $\partial \mathcal{O}$ is the boundary of $ \mathcal{O}$. Since the distance to the boundary of the  domain $\Omega$ under study will play an important role, we will denote it by $\delta$, 
\begin{equation}\label{{def_delta_chap4}}
\text{ i.e. }  \ \delta(x) = \mathrm{dist} (x, \partial \Omega ) 1_{\Omega}(x). 
\end{equation}
When it is defined, the principal eigenvalue of the fractional Dirichlet operator $(-\Delta)^\alpha - Id$ in $\mathcal{O}$ will play also an important role in the following. We will denote it by $\lambda_\alpha(\mathcal{O})$:
\begin{equation} \label{def_lambda}
\text{ i.e. }  \ 
\left\lbrace 
\begin{aligned}
(-\Delta)^\alpha \phi(x) - \phi(x) &= \lambda_\alpha(\mathcal{O}) \phi (x) && \text{ for } x \in \mathcal{O}, \\
\phi(x) &= 0 && \text{ for } x \in \mathbb{R}^d \backslash \mathcal{O}. 
\end{aligned}
\right.
\end{equation}
We underline that $\lambda_\alpha (\mathcal{O}) = \xi_\alpha (\mathcal{O}) - 1$. The principal eigenvalue $\lambda_\alpha(\mathcal{O})$ will be a key ingredient of most of the upcoming results. Note that a such eigenvalue is well defined for instance when the domain $\mathcal{O}$ is smooth and bounded with a finite number of components. It is also well defined if $\mathcal{O}$ is smooth, periodic and such that the number of components in all compact sets is finite. For such domains, there is a dichotomy: either $\lambda_\alpha(\mathcal{O})<0$ and \eqref{stationary_state} admits a positive bounded non-trivial solution, either $\lambda_\alpha (\mathcal{O}) \geq 0$ and the unique positive bounded solution of \eqref{stationary_state} is $0$.

\vspace{0.5cm}

We assume that the domain $\Omega$ may be written as
\begin{equation}\label{H1Chap4}\tag{H1}
\Omega = \underset{k \in \mathbb{N}}{\bigcup} \Omega_k
\end{equation}
where the sets $(\Omega_k)_{k \in \mathbb{N}}$ are smooth, connected, bounded. Moreover, we assume that 
\begin{equation}\label{H2Chap4} \tag{H2}
\Omega \text{ satisfies the uniform interior and exterior ball condition.}
\end{equation}
\begin{definition}[The interior and exterior ball condition]
A set $\mathcal{O}$ satisfies the uniform interior and exterior ball condition if there exists $\varepsilon_0 \in ]0, 1[$ such that 
\begin{equation}
\begin{aligned}
&\forall x \in \mathcal{O}, \ \exists z_x \in \mathcal{O} &&\text{ such that } x \in B(z_x, \varepsilon_0) \subset \mathcal{O}, \\
\text{ and  } \quad &\forall y \in \mathcal{O}^c, \ \exists z_y \in \mathcal{O}^c &&\text{ such that } y \in B(z_y, \varepsilon_0) \subset \mathcal{O}^c.
\end{aligned}
\end{equation}
\end{definition}
We assume that $\Omega$ can be decomposed in the following form : 
\begin{equation}\label{H3Chap4}\tag{H3}
\Omega = \Omega_- \cup   \Omega_+  = \underset{k \in \mathbb{N}}{\bigcup} ( \Omega_{-,k} \cup \Omega_{+,k}) .
\end{equation}
In the following, when we pick $x \in \Omega_\pm$, the integer $k_x$ will denote the only integer such that $x \in \Omega_{\pm,k_x}$. 

We assume that the domain $\Omega_+$ is composed by some uniformly bounded clusters $\mathcal{C}_{+,k}$ which are "far way" each other
\begin{equation}\label{H4Chap4}\tag{H4}
\begin{aligned}
\text{ i.e.} \qquad  \forall k \in \mathbb{N}, \ \exists z_k \in \mathbb{R}^d \text{ such that }& \quad \mathcal{C}_{+,k} \subset B(z_k, r_0), \quad  |z_i - z_j | > R_0+r_0 \ (\text{ for } i \neq j) \\  \text{ and }& \quad \mathrm{dist}(\Omega_+ , \Omega_-) > R_0 . 
\end{aligned}
\end{equation}
In what follows, the constant $R_0$ will be assume suitably large. Moreover, we assume that the eigenvalue of the Dirichlet operator $(-\Delta)^\alpha - Id$ in $\left\lbrace x \in \mathbb{R}^d | \mathrm{dist}(x, \mathcal{C}_{+,k} ) < \varepsilon_1 \right\rbrace$ is uniformly bounded from below for some positive $\varepsilon_1 < \frac{\varepsilon_0}{4}$:
\begin{equation}\label{H5Chap4}\tag{H5}
\text{ i.e. } \ 0 < \lambda_0 < \lambda_\alpha (\left\lbrace x \in \mathbb{R}^d | \mathrm{dist}(x, \mathcal{C}_{+,k} ) < \varepsilon_1 \right\rbrace). 
\end{equation}

For $\Omega_-$, we assume that it is not empty and there exists a finite number of bounded sets $(\underline{\Omega}_{-,n})_{n \in \left\lbrace 1,... , N\right\rbrace}$ made up with a finite number of connected components such that 
\begin{equation} \label{H6Chap4}\tag{H6}
\begin{aligned}
& \forall n \in \left\lbrace 1,..., N \right\rbrace , \   \lambda_\alpha (\underline{\Omega}_{ -,n})<0,\\
\text{ and } &\forall x \in \Omega_{-}, \exists( y, n) \in \mathbb{R}^d \times \left\lbrace 1,..., N \right\rbrace \text{ such that } x \in \left(y + \underline{\Omega}_{-,n} \right) \text{ and } (y + \underline{\Omega}_{-,n} ) \subset \Omega. 
\end{aligned}
\end{equation}

\begin{remark}
The assumptions \eqref{H1Chap4}--  \eqref{H6Chap4} on $\Omega$ cover a large case of sets from the sets with only one bounded connected component to a general unbounded with an infinite number of components. 
\end{remark}

\begin{figure}
    \centering
    \includegraphics[width=8cm,height=6cm]{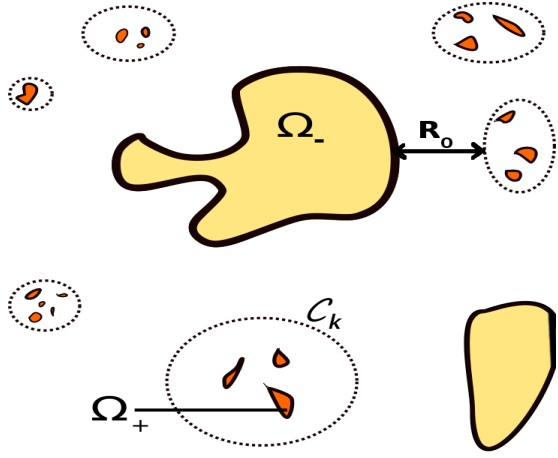}
    \caption{Illustration of a possible domain $\Omega$}
    \label{fig:my_label1chap4}
\end{figure}

At the end of section \ref{section2}, we present some examples of domains which satisfy such assumptions. 

\begin{remark}
The decomposition $\Omega = \Omega_- \cup \Omega_+$ is not unique. For instance if $\Omega_+$ is bounded, one can take $\Omega_-' = \Omega$ and $\Omega_+ = \emptyset$.
\end{remark}

\subsubsection{The steady solution $n_+$: existence, accurate estimates, uniqueness}

We begin by stating that there exists a bounded non-trivial solution $n_+$
\begin{theorem}
Under the assumption \eqref{H6Chap4}, there exists a bounded non-trivial solution of \eqref{stationary_state}.
\end{theorem}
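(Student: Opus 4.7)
The plan is to obtain $n_+$ as the monotone limit of solutions on an exhausting sequence of bounded truncations of $\Omega$. The two key inputs are hypothesis \eqref{H6Chap4}, which furnishes a translate $\omega_0 := y_0 + \underline{\Omega}_{-,n_0} \subset \Omega$ with $\lambda_\alpha(\omega_0) = \lambda_\alpha(\underline{\Omega}_{-,n_0}) < 0$, and the dichotomy recalled after \eqref{def_lambda}, which guarantees the existence of a unique positive bounded solution of \eqref{stationary_state} on any bounded smooth domain whose principal eigenvalue is negative.

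\textbf{Step 1: sub- and super-solutions.} The function $\bar n \equiv \mathbf{1}_{\Omega}$ is a super-solution of \eqref{stationary_state}, since for $x\in\Omega$ one has $(-\Delta)^\alpha \bar n(x) = C_\alpha \int_{\Omega^c}|x-y|^{-d-2\alpha}\,dy \geq 0 = \bar n(x)-\bar n(x)^2$. For the sub-solution, let $\phi_0$ be the principal eigenfunction of $(-\Delta)^\alpha-\mathrm{Id}$ on $\omega_0$, extended by $0$ outside and normalised by $\|\phi_0\|_\infty=1$. For $\varepsilon\in(0,|\lambda_\alpha(\omega_0)|]$, a direct computation shows that $\underline n := \varepsilon\phi_0$ satisfies $(-\Delta)^\alpha \underline n - \underline n + \underline n^2 = \varepsilon(\lambda_\alpha(\omega_0) + \varepsilon\phi_0)\phi_0 \leq 0$ on $\omega_0$, and off $\omega_0$ the fractional Laplacian of $\varepsilon\phi_0$ is non-positive because $\phi_0\geq 0$ vanishes at the evaluation point; hence $\underline n$ is a sub-solution and $\underline n\leq \bar n$.

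\textbf{Step 2: solutions on bounded truncations and monotonicity.} Let $\omega_R := \Omega \cap B(0,R)$ for $R$ large enough so that $\omega_0 \subset \omega_R$ (after a mild smoothing preserving \eqref{H1Chap4}--\eqref{H2Chap4} if necessary). The domain monotonicity of $\lambda_\alpha$ via the Rayleigh quotient \eqref{def_R} gives $\lambda_\alpha(\omega_R)\leq \lambda_\alpha(\omega_0)<0$, so the dichotomy yields a unique positive bounded solution $n_R$ of \eqref{stationary_state} on $\omega_R$; classical monotone iteration between $\underline n$ and $\bar n$ produces $n_R$ with $\underline n \leq n_R \leq 1$. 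For $R'>R$, extending $n_R$ by $0$ on $\omega_{R'}\setminus\omega_R$ yields a sub-solution of \eqref{stationary_state} on $\omega_{R'}$: on $\omega_R$ it still solves the equation, while at points $x\in\omega_{R'}\setminus\omega_R$ we have $n_R(x)=0$ and $(-\Delta)^\alpha n_R(x) = -C_\alpha \int \frac{n_R(y)}{|x-y|^{d+2\alpha}}\,dy \leq 0$. Comparison with the unique positive solution $n_{R'}$ then gives $n_R\leq n_{R'}$. The sequence $(n_R)$ is thus non-decreasing and bounded by $1$, so it converges pointwise to a function $n_+$ with $\underline n \leq n_+ \leq 1$; in particular $n_+\not\equiv 0$.

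\textbf{Step 3: passage to the limit.} To conclude that $n_+$ solves \eqref{stationary_state} on $\Omega$ one needs uniform interior regularity of the $n_R$'s. Using the uniform interior/exterior ball condition \eqref{H2Chap4} and the standard Hölder/Schauder estimates for the Dirichlet fractional Laplacian with right-hand side bounded in $L^\infty$, the $n_R$ are uniformly bounded in $C^{2\alpha+\gamma}_{\mathrm{loc}}(\Omega)$ for some $\gamma>0$. One then extracts a subsequence converging in $C^{2\alpha}_{\mathrm{loc}}$ and passes to the limit in the pointwise expression of $(-\Delta)^\alpha n_R$: the tail $|x-y|>\eta$ is handled by dominated convergence (using $0\leq n_R\leq 1$), while the singular part $|x-y|<\eta$ is controlled uniformly by the Hölder estimate. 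Passing to the limit in the equation itself then shows $(-\Delta)^\alpha n_+ = n_+ - n_+^2$ in $\Omega$, while $n_+\equiv 0$ outside $\Omega$ is automatic from the construction.

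The chief technical obstacle, in my view, is Step 3: one must justify that the nonlocal operator $(-\Delta)^\alpha$ commutes with the monotone limit uniformly up to the boundary of the unbounded domain $\Omega$. The uniform ball condition \eqref{H2Chap4} is what enables one to apply known boundary regularity results for $(-\Delta)^\alpha$ with a scale independent of the patch, thereby avoiding any degeneracy of the estimates as $R\to\infty$.
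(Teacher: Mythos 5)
Your argument is correct, and it reaches the conclusion by a route that differs in two small but genuine ways from the paper's. The paper's proof is a one-paragraph appeal to the classical sub/super-solution iteration performed directly on the unbounded $\Omega$ (with a reference to Smoller): its sub-solution is the positive steady state of \eqref{stationary_state} on the bounded auxiliary domain $\Omega_-\cap B(0,R)$ with $\lambda_\alpha(\Omega_-\cap B(0,R))<0$, and its super-solution is the constant $1$. You instead (i) take as a sub-solution the scaled principal eigenfunction $\varepsilon\phi_0$ on a translate $\omega_0$ of some $\underline{\Omega}_{-,n_0}$, which is more elementary since it does not first require solving a KPP problem on a subdomain; and (ii) build the solution by exhausting $\Omega$ with the bounded truncations $\omega_R$, establishing monotonicity in $R$, and passing to a monotone limit. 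The exhaustion route is a bit longer, but it sidesteps any discussion of well-posedness of the linear iterates on the unbounded, infinitely-fragmented $\Omega$; the paper's direct iteration is shorter but implicitly relies on that. Both sub-solutions and both construction schemes are standard and valid.

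Two small remarks on your write-up. First, your flagged "chief technical obstacle" in Step~3 is overstated: to verify the equation you only need $(-\Delta)^\alpha n_R(x)\to(-\Delta)^\alpha n_+(x)$ at \emph{interior} points $x\in\Omega$, so uniform \emph{interior} Hölder estimates on balls at fixed distance from $\partial\Omega$ (which come from $0\le n_R\le 1$ and elliptic interior regularity, with constants depending only on that distance) suffice together with dominated convergence for the tail; the uniform ball condition \eqref{H2Chap4} and boundary regularity are not really needed here, and the monotonicity of $(n_R)$ means you need not even extract a subsequence. Second, you should note (as you do, parenthetically) that $\omega_R=\Omega\cap B(0,R)$ may have corners where $\partial\Omega$ meets $\partial B(0,R)$, and that by \eqref{H2Chap4} the truncation still has only finitely many components, so the dichotomy applies; both points are easily handled but worth stating.
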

\noindent Thanks to \eqref{H6Chap4}, we can provide a subsolution: the solution of \eqref{stationary_state} where the domain $\Omega$ is switched with $\Omega_- \cap B(0, R)$ (with $R$ large enough such that $\lambda_\alpha (\Omega_- \cap B(0,R))< 0$). Next, a supersolution is the constant function $1$. Finally, we construct a solution by iteration from this sub and super solution (see \cite{Smoller}).

\noindent A much less classical result is an estimate from above and below of any positive non-trivial bounded solutions of \eqref{stationary_state}.  
\begin{theorem}\label{lemma1}
Assume that $\Omega$ fulfils \eqref{H1Chap4}-- \eqref{H6Chap4} and that $R_0>C_{d,\alpha, \varepsilon_0, \varepsilon_1, r_0}$ where $C_{d,\alpha, \varepsilon_0, \varepsilon_1, r_0}$ is a positive constant depending on the parameters $d,\alpha, \varepsilon_0, \varepsilon_1, r_0$ then for any bounded positive solution $n_+$ of  \eqref{stationary_state}, there exists two positive constants $c_1 \in ]0,1[, \ C_1 \in \mathbb{R}_+$ such that for all $x\in \mathbb{R}^d$ we have 
\begin{equation}\label{lemmaeq1}
c_1 \mathcal{G}(x) \leq n_+(x) \leq \mathcal{G}(x)
\end{equation}
with 
\begin{equation}\label{G1}
\mathcal{G}(x) =  \min \left(\min(C_1 \delta(x)^\alpha, \varepsilon^\alpha) \times G(x), 1\right).
\end{equation}
and 
\begin{equation}
G(x) = 1_{\Omega_-}(x) +\left( \int_{\Omega_-} \frac{1}{|x-y|^{d+2\alpha}}dy  \right)  \  1_{\Omega_+}(x) .
\end{equation}
\end{theorem}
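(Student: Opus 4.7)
The plan is to establish the upper and lower bounds separately, in each case distinguishing $x \in \Omega_-$ from $x \in \Omega_+$. As a preliminary step, the maximum principle applied to the logistic reaction immediately yields $0 \le n_+ \le 1$ (the constant $1$ being a supersolution of \eqref{stationary_state}).

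\textbf{Upper bound.} On $\Omega_-$, $G\equiv 1$ so the target inequality reduces to $n_+(x) \le \min(C_1 \delta(x)^\alpha, \varepsilon^\alpha, 1)$, which follows from the fractional Hopf/boundary regularity theory of Ros-Oton--Serra applied to a solution of \eqref{stationary_state} under the uniform interior/exterior ball condition \eqref{H2Chap4} and the pointwise bound $n_+\le 1$. For $x$ in a cluster $\mathcal{C}_{+,k}$, introduce $\widetilde{\mathcal{C}}_{+,k}=\{y\in\mathbb{R}^d:\mathrm{dist}(y,\mathcal{C}_{+,k})<\varepsilon_1\}$; by \eqref{H5Chap4} the operator $(-\Delta)^\alpha-\mathrm{Id}$ has principal eigenvalue $\ge\lambda_0>0$ on $\widetilde{\mathcal{C}}_{+,k}$ and is therefore coercive there. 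Since $((-\Delta)^\alpha-\mathrm{Id})n_+ = -n_+^2 \le 0$ inside $\widetilde{\mathcal{C}}_{+,k}$, comparison with the Poisson representation associated to this coercive operator gives
\[
n_+(x) \le \int_{\widetilde{\mathcal{C}}_{+,k}^c} \widetilde{P}(x,y)\, n_+(y)\, dy,
\]
with a sharp kernel bound $\widetilde{P}(x,y) \le C\,\mathrm{dist}(x,\partial\widetilde{\mathcal{C}}_{+,k})^\alpha|x-y|^{-d-2\alpha}$. Splitting the exterior into $\Omega_-$, the other clusters $\mathcal{C}_{+,j}$ ($j\neq k$), and $\Omega^c$ (where $n_+=0$), the first piece, combined with $n_+\le 1$, produces exactly the factor $G(x)$; the second piece is absorbed by the first thanks to the separation \eqref{H4Chap4} with $R_0$ sufficiently large. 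The local fractional Hopf estimate at $\partial\widetilde{\mathcal{C}}_{+,k}$ then supplies the $\delta(x)^\alpha$ prefactor.

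\textbf{Lower bound.} On $\Omega_-$, assumption \eqref{H6Chap4} furnishes, around each $x\in\Omega_-$, a translate $y+\underline{\Omega}_{-,n}\subset\Omega$ containing $x$ with $\lambda_\alpha(\underline{\Omega}_{-,n})<0$. The equation \eqref{stationary_state} posed on this translated patch admits a positive steady state $v$, which extended by $0$ outside is a (strict) subsolution on all of $\Omega$. Standard fractional Dirichlet lower bounds imply $v(x) \gtrsim \min(\delta(x)^\alpha,\varepsilon^\alpha)$, and comparison with $n_+$ (iterating with the fractional Hopf lemma if necessary) gives $n_+(x) \ge c_1\min(\delta(x)^\alpha,\varepsilon^\alpha)$ throughout $\Omega_-$. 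For $x\in\Omega_+$, I would use the representation associated to $(-\Delta)^\alpha$ itself on $\widetilde{\mathcal{C}}_{+,k}$:
\[
n_+(x) = \int_{\widetilde{\mathcal{C}}_{+,k}^c} P_0(x,y)\, n_+(y)\,dy + \int_{\widetilde{\mathcal{C}}_{+,k}} G_0(x,y)\,(n_+-n_+^2)(y)\,dy,
\]
whose interior term is nonnegative since $n_+\le 1$. Classical two-sided Poisson kernel estimates give $P_0(x,y) \gtrsim \mathrm{dist}(x,\partial\widetilde{\mathcal{C}}_{+,k})^\alpha|x-y|^{-d-2\alpha}$ for $y$ outside a fixed neighborhood of $\widetilde{\mathcal{C}}_{+,k}$; restricting the exterior integral to $\Omega_-$ and plugging in the lower bound just obtained on $\Omega_-$ yields the desired estimate $n_+(x) \ge c_1\,\delta(x)^\alpha G(x)$.

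\textbf{Main obstacle.} The hard step is the upper bound on $\Omega_+$: the Poisson representation collects contributions from every other cluster, and these must be quantitatively absorbed into the single term produced by $\Omega_-$. This absorption works exclusively because the kernel of $(-\Delta)^\alpha-\mathrm{Id}$ decays (thanks to the coercivity ensured by \eqref{H5Chap4}) and because the clusters are well separated through \eqref{H4Chap4} with $R_0$ large; the interplay between these two effects is what produces the threshold $R_0>C_{d,\alpha,\varepsilon_0,\varepsilon_1,r_0}$ appearing in the statement. Once this geometric bookkeeping is in place, everything else reduces to classical boundary regularity for the fractional Laplacian on smooth bounded sets.
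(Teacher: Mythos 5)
Your overall architecture (treat $\Omega_-$ and $\Omega_+$ separately, upper and lower bounds separately, control $\Omega_+$ through the exterior contribution of $\Omega_-$ weighted by the boundary factor $\delta^\alpha$) is the same as the paper's. However, there is a genuine gap in the upper bound on $\Omega_+$, which is also the step you flag as the ``main obstacle.''

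You invoke a Poisson-type representation for the coercive operator $(-\Delta)^\alpha-\mathrm{Id}$ on $\widetilde{\mathcal{C}}_{+,k}$ together with a sharp kernel bound
$\widetilde{P}(x,y)\le C\,\mathrm{dist}(x,\partial\widetilde{\mathcal{C}}_{+,k})^\alpha|x-y|^{-d-2\alpha}$, and you say that the contributions of the other clusters $\mathcal{C}_{+,j}$ ($j\neq k$) are ``absorbed'' into the $\Omega_-$ term because of \eqref{H4Chap4}. Two problems. First, such a two-sided kernel bound is classical for $(-\Delta)^\alpha$ on a ball, but for $(-\Delta)^\alpha-\mathrm{Id}$ on a general smooth bounded set close to the coercivity threshold (note $\lambda_0$ can be small) it is a nontrivial assertion that would need its own proof; the paper avoids it entirely by first establishing the decay estimate \emph{without} the $\delta^\alpha$ factor (Lemma~2), using the explicit eigenfunction $\phi_k$ of the enlarged cluster scaled by the exterior mass $\widetilde{g}_k$, and then grafting the boundary decay on afterward with the Ros-Oton--Serra torus barrier $\psi$ and a cutoff $\widetilde{\chi}_k$. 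Second, and more importantly, when you split the exterior integral into $\Omega_-$, other clusters, and $\Omega^c$, the contribution from $\mathcal{C}_{+,j}$ is not ``$\lesssim G(x)$'' by itself: it involves $n_+$ restricted to $\mathcal{C}_{+,j}$, about which at that stage you only know $n_+\le 1$. Plugging that in gives a term $\sim\sum_j |z_j-z_k|^{-d-2\alpha}$, which has no reason to be controlled by $\int_{\Omega_-}|x-y|^{-d-2\alpha}dy$ (e.g. if $\Omega_-$ is a single bounded patch and the $\mathcal{C}_{+,j}$ are spread out, the cluster sum can dominate). The paper closes this loop by \emph{iterating} the localised estimate: feeding the one-step bound back into the contributions from the other clusters produces a geometric series $\sum_k C_1^k$ that converges provided $R_0>(2^{d+2\alpha}C_0)^{1/(2\alpha)}$, and this iteration is precisely where the threshold on $R_0$ enters. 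Without this self-improvement step the absorption you assert does not actually hold, so your proof of the $\Omega_+$ upper bound is incomplete.

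The lower bounds and the $\Omega_-$ upper bound are essentially correct and match the paper's arguments (translated subsolutions on $\underline{\Omega}_{-,n}$, and the Poisson kernel on an interior ball of radius $\varepsilon_0$, respectively), although where you invoke ``classical two-sided Poisson kernel estimates'' on $\widetilde{\mathcal{C}}_{+,k}$ for the lower bound, the paper more simply uses the exact formula for the ball $B(z_x,\varepsilon_0)$, which suffices and requires no general kernel lemma.
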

\noindent We underline that the function $\mathcal{G}$ depends only on the decomposition \eqref{H3Chap4}. We will devote a special section to its proof, its strategy being presented at the beginning. Thanks to Theorem \ref{lemma1}, we prove uniqueness of the solution of \eqref{stationary_state}. 
\begin{theorem}\label{theorem1Chap4}
Assuming \eqref{H1Chap4}-- \eqref{H6Chap4} and if $R_0$ is large enough then there exists a unique bounded positive non-trivial solution of \eqref{stationary_state}.
\end{theorem}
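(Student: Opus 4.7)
Existence is immediate from the theorem stated just above: by \eqref{H6Chap4} there is some $\underline{\Omega}_{-,n}\subset \Omega$ with $\lambda_\alpha(\underline{\Omega}_{-,n})<0$, and the positive solution of \eqref{stationary_state} posed on this bounded patch, extended by zero, is a subsolution; $\overline{n}\equiv 1$ is a supersolution; a monotone iteration in the style of \cite{Smoller} yields a bounded positive solution $n_+$. It therefore remains to establish uniqueness, which is the substantive content of the theorem.

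The plan is a sliding argument powered by the two-sided $\mathcal{G}$-envelope of Theorem \ref{lemma1}. Let $n_1,n_2$ be two bounded positive solutions of \eqref{stationary_state}. Theorem \ref{lemma1} gives $c_1\,\mathcal{G}\leq n_i\leq \mathcal{G}$ on $\mathbb{R}^d$ for $i=1,2$, so $c_1 n_1\leq n_2$ and $c_1 n_2\leq n_1$ pointwise. Set
\[ t^*=\sup\bigl\{\,t\in(0,1]\ :\ t\,n_1\leq n_2 \text{ on } \mathbb{R}^d\,\bigr\}. \]
Then $t^*\geq c_1>0$, and by swapping the roles of $n_1$ and $n_2$ it is enough to show $t^*=1$. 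Suppose, for contradiction, that $t^*<1$, and set $w:=n_2-t^* n_1\geq 0$, which vanishes on $\Omega^c$. Subtracting $t^*$ times the equation for $n_1$ from the one for $n_2$ and using the identity $n_2^2-t^* n_1^2 = (n_2+t^* n_1)\,w - t^*(1-t^*)\,n_1^2$, one obtains
\[ (-\Delta)^\alpha w + (n_2+t^* n_1-1)\,w = t^*(1-t^*)\, n_1^2 \quad\text{in } \Omega, \]
whose right-hand side is strictly positive in $\Omega$ (since $n_1>0$ there by Theorem \ref{lemma1}). The strong maximum principle then forces $w>0$ in $\Omega$.

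The contradiction will come from producing $\eta>0$ such that $w\geq \eta\, n_1$ on $\mathbb{R}^d$, which violates the maximality of $t^*$. Using $n_1\leq \mathcal{G}$, it suffices to exhibit $c_w>0$ with $w\geq c_w\,\mathcal{G}$. My intention is to replay the lower-bound part of the proof of Theorem \ref{lemma1} for the function $w$: it satisfies a linear fractional equation with a nonnegative source $t^*(1-t^*)\,n_1^2$, which retains positive mass on each favorable component $\underline{\Omega}_{-,n}$ of $\Omega_-$, with zero Dirichlet datum, and with a bounded zero-order coefficient $V:=n_2+t^* n_1-1$ that tends to $-1$ at infinity on $\Omega_+$ (because $n_i\leq \mathcal{G}\to 0$ there). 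This last property keeps the linear operator $(-\Delta)^\alpha + V$ coercive far from the favorable patches and allows a comparison with a suitable multiple of $\mathcal{G}$.

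I expect the main obstacle to be precisely this last step: transferring the $\mathcal{G}$-lower bound from the nonlinear Fisher-KPP setting of Theorem \ref{lemma1} to the present linear perturbed setting, while controlling the possibly sign-changing potential $V$ uniformly at infinity across the disconnected pieces $\Omega_{+,k}$. Once this is achieved, the chain $w\geq c_w\,\mathcal{G}\geq c_w\, n_1$ closes the contradiction and forces $t^*=1$, giving $n_2\geq n_1$ and, by symmetry, $n_1=n_2$.
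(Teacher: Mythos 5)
Your setup is sound and is in fact the same Berestycki-type sliding scheme the paper uses (the paper slides from above, writing $u_+\leq l\,v_+$ with $l=\inf\{L>1: u_+\leq Lv_+\}$, while you slide from below with $t^*\leq 1$; this is only a cosmetic difference, and your algebraic identity for $w=n_2-t^*n_1$ is correct). The existence part and the initial comparison via Theorem \ref{lemma1} are also as in the paper. However, the step you defer --- producing $\eta>0$ with $w\geq \eta\,n_1$, i.e.\ a uniform strictly positive lower bound on $\inf_{\Omega} w/\mathcal{G}$ --- is not a technical afterthought: it is the entire content of the paper's uniqueness proof, and ``replaying the lower-bound part of Theorem \ref{lemma1}'' does not go through as stated. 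The lower bound of Theorem \ref{lemma1} on $\Omega_-$ is obtained by comparing $n_+$ with the positive solution of the \emph{nonlinear} logistic problem on each translate $y+\underline{\Omega}_{-,n}$, whose existence is exactly what \eqref{H6Chap4} guarantees; for the \emph{linear} equation satisfied by $w$ there is no such canonical subsolution, and the potential $V=n_2+t^*n_1-1$ is not signed on $\Omega_-$ (it can be close to $+1$ where the solutions are close to $1$), so a naive barrier comparison fails there. In addition, $\Omega$ is unbounded with infinitely many components, so the infimum of $w/\mathcal{G}$ need not be attained, and near $\partial\Omega$ both $w$ and $\mathcal{G}$ vanish like $\delta^\alpha$, so one needs a boundary Hopf-type estimate that is \emph{uniform} over all components. (Also, your remark that $V\to -1$ at infinity on $\Omega_+$ is not correct in general when $\Omega_-$ is unbounded; what holds is only the uniform bound $V\leq C R_0^{-2\alpha}-1<0$ on $\Omega_+$ for $R_0$ large.)

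The paper closes precisely this gap with a quantitative argument inspired by the fractional Hopf lemma of Greco--Servadei: assuming $\inf_\Omega (lv_+-u_+)/\mathcal{G}=0$, it takes a minimizing sequence $(x_n)$, distinguishes the cases $\inf\delta(x_n)>\delta_0$ and $\delta(x_n)\to 0$ (using the interior ball condition in the second case), and compares with the explicit bubble function $\beta_{z,r}$ of \eqref{beta}. At suitable near-minimum points $\xi_k$ it splits $(-\Delta)^\alpha\bigl((lv_+-u_+)/G(x_{n_k})\bigr)$ into a near part $I_1$, shown to be $\leq \varepsilon/2-c_*$ using the two-sided bounds of Theorem \ref{lemma1} together with the uniform Harnack-type estimates for $G$ (Proposition \ref{propg1}, Corollary \ref{propg3}), and a far part $I_2=O(1/k)$; combined with the differential inequality this yields $-\varepsilon\leq -c_*+\varepsilon$, a contradiction. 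Some version of this localization and uniformization argument is indispensable; without it your proposal is a correct reduction of the problem rather than a proof.
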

\noindent The proof of uniqueness follows a general argument introduced by Berestycki in \cite{Berestycki81}. Thanks to Theorem \ref{lemma1}, we compare two solutions and we conclude thanks to the maximum principle or the fractional Hopf Lemma.

\noindent A consequence is that the steady solution is a global attractor for the Cauchy Problem.
\begin{corollary}\label{propagation}
Under the previous hypothesis, if $n$ is solution of \eqref{equation_with_time} and $n_0$ is positive, non-trivial and compactly supported initial data with the closure of the support included in the closure of $\Omega$ then there holds
\[\forall x \in \mathbb{R}^d, \quad \underset{ t \rightarrow +\infty}{\lim} n(x,t) = n_+(x).\]
\end{corollary}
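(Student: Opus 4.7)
The proof rests on the classical monotone sandwich between a sub- and a super-solution, combined with the uniqueness result Theorem \ref{theorem1Chap4} to identify both stationary limits with $n_+$.

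First, a super-solution. Set $M = \max(1, \|n_0\|_\infty)$ and $\overline{u}_0 = M \cdot 1_\Omega$. On $\Omega$ a direct computation gives $(-\Delta)^\alpha \overline{u}_0(x) = C_\alpha M \int_{\Omega^c} |x-y|^{-d-2\alpha}\,dy > 0$ whereas $\overline{u}_0 - \overline{u}_0^2 = M(1-M) \leq 0$, so $\overline{u}_0$ is a super-solution of \eqref{stationary_state}. For the sub-solution, Assumption \eqref{H6Chap4} supplies a translated copy $D \subset \Omega$ of some $\underline{\Omega}_{-,n}$ with $\lambda_\alpha(D) < 0$; let $\underline{\phi}$ denote its principal eigenfunction (extended by zero outside $D$). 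Picking $\epsilon > 0$ with $\epsilon \|\underline{\phi}\|_\infty < -\lambda_\alpha(D)$, the function $\underline{u}_0 = \epsilon \underline{\phi}$ satisfies $(-\Delta)^\alpha \underline{u}_0 - \underline{u}_0 + \underline{u}_0^2 = \epsilon \underline{\phi}\,[\lambda_\alpha(D) + \epsilon \underline{\phi}] \leq 0$, i.e.\ is a sub-solution of \eqref{stationary_state}.

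Now let $\overline{u}(x,t)$ and $\underline{u}(x,t)$ solve \eqref{equation_with_time} with respective initial data $\overline{u}_0$ and $\underline{u}_0$. The standard argument, namely comparing the time-independent super-solution $v(x,t) = \overline{u}_0(x)$ of \eqref{equation_with_time} with $\overline{u}$, then comparing $\overline{u}(\cdot, t)$ with $\overline{u}(\cdot, t+h)$, shows $t \mapsto \overline{u}(x,t)$ is non-increasing; dually, $t \mapsto \underline{u}(x,t)$ is non-decreasing (and bounded above by $\overline{u}_0$ after further shrinking $\epsilon$). Fractional parabolic regularity then yields pointwise limits $\overline{u}_\infty, \underline{u}_\infty$ that solve \eqref{stationary_state}. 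Since $\underline{u}_\infty \geq \underline{u}_0 \not\equiv 0$ is a non-trivial bounded positive solution, Theorem \ref{theorem1Chap4} forces $\underline{u}_\infty = n_+$. To place the actual $n$ between the two flows, I invoke the strong maximum principle for the fractional Dirichlet heat equation: $n(\cdot, \tau) > 0$ on all of $\Omega$ for every $\tau > 0$, so by continuity on the compact set $\overline{D}$ I can shrink $\epsilon$ once more to guarantee $\epsilon \underline{\phi} \leq n(\cdot, \tau)$, while the a priori bound $\|n(\cdot,t)\|_\infty \leq M$ gives $n(\cdot, \tau) \leq M \cdot 1_\Omega = \overline{u}_0$. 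Parabolic comparison then delivers $\underline{u}(x, t-\tau) \leq n(x,t) \leq \overline{u}(x, t-\tau)$ for $t \geq \tau$. Since $\overline{u}(x,t) \geq \underline{u}(x,t) \to n_+(x) > 0$, the limit $\overline{u}_\infty$ is also non-trivial and hence equals $n_+$ by Theorem \ref{theorem1Chap4}; the sandwich collapses to $n(x,t) \to n_+(x)$.

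The only non-trivial input beyond Theorem \ref{theorem1Chap4} is the strong maximum principle providing the positivity of $n(\cdot, \tau)$ on all of $\Omega$. This is what allows the sub-solution $\epsilon \underline{\phi}$, whose support may lie in a patch of $\Omega_-$ disjoint from $\mathrm{supp}(n_0)$, to be placed under $n(\cdot, \tau)$; it is a well-known consequence of the nonlocal spreading of mass by the fractional Dirichlet heat kernel. Everything else — the monotone limits being stationary solutions and their identification via uniqueness — is then routine.
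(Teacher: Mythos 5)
Your proposal is correct and reaches the conclusion by a genuinely different, somewhat more elementary route than the paper. The paper works hard to position $n(\cdot,1)$ \emph{below a multiple of $n_+$ itself}: it invokes the fractional Dirichlet heat kernel estimates of Theorem 2 in \cite{papier2} to show $n(x,1)\leq C\min(\delta(x)^\alpha,1)/(1+|x|^{d+2\alpha})$, which combined with the lower bound of Theorem \ref{lemma1} gives $n(\cdot,1)\leq C n_+$; it then runs the monotone scheme starting from the super-solution $C n_+$, whose decay matches that of $n_+$ by construction. You instead take the crude super-solution $M\cdot1_\Omega$ (which is a stationary super-solution because $M\geq 1$ forces $M-M^2\leq 0$ while $(-\Delta)^\alpha(M1_\Omega)\geq 0$ in $\Omega$), so the inequality $n_0\leq M1_\Omega$ is immediate and no heat kernel estimate is needed on the upper side. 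On the lower side the paper again calls on \cite{papier2} to show $n(\cdot,1)$ dominates the rescaled eigenfunction; you replace this by the qualitative strong maximum principle for the fractional Dirichlet evolution (instantaneous spreading of positivity), together with the compactness of the support of $\underline{\phi}$, to shrink $\epsilon$ so that $\epsilon\underline{\phi}\leq n(\cdot,\tau)$. Both proofs then feed the same monotone-limit mechanism and close with uniqueness (Theorem \ref{theorem1Chap4}). What your approach buys is economy: you only use the \emph{existence} of $n_+$, its strict positivity, and uniqueness, not the quantitative kernel bounds; what the paper's approach buys is that its super-solution carries the correct decay at infinity from the outset, which is conceptually cleaner and implicitly quantitative. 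One small point of care in your version: $M\cdot 1_\Omega$ is discontinuous at $\partial\Omega$ and $(-\Delta)^\alpha(M1_\Omega)$ blows up there, so one should either argue the super-solution inequality in a weak/viscosity sense or, more simply, derive the bound $n\leq M$ by comparing with the spatially constant super-solution $M$ of the free-space equation (which is enough for the monotonicity $\overline{u}(\cdot,t+h)\leq\overline{u}(\cdot,t)$); this does not affect the argument's validity.
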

\noindent We underline that the proof will use the uniqueness result. From a biological point of view, we talk about a colonisation rather than an  invasion phenomena. Indeed, invasion imply a colonisation phenomenon and a an autonomy from the population in the neighborhood of the colonised area. This last fact does not hold in $\Omega_+$ according to Theorem \ref{lemma1}.

\subsection{Discussion, comparison with existing results}
Theorem \ref{theorem1Chap4} has a link with some recent results dealing with steady solutions of nonlocal Fisher-KPP equations in general environments. Closest to the result is the analysis of Berestycki, Coville and Vo 
\cite{BerCovVo}, where the dispersal is given by a smooth integral kernel, the domain is the whole space, but the reproduction term is inhomogeneous. Under essentially two assumptions, namely that a generalised principal eigenvalue (see  \cite{BerRossi} for the general definitions) is negative, and that the medium outside a ball is unfavourable, existence of uniqueness of a steady solution under a certain subsolution is established. We note a related work by Brasseur 
\cite{Bra} that achieves a passage to the limit of a more and more concentrated dispersal. Our general setting is less general than the situation considered in \cite{BerCovVo}, in particular we have to make assumptions related to the fact that the environment is fragmented. in particular, whether Theorem  \ref{theorem1Chap4}  holds under the sole assumption on the sign of a generalised principal eigenvalue is an important question whose answer is unknown to us at the moment.  Additional technical difficulties are present due to the fragmented environment (barriers at the boundary of the domain are sometimes tricky to devise) as well as the presence of the fractional Laplacian. On the other hand, we note that our uniqueness result holds in the whole class of bounded solutions due to the general estimate provided in Theorem \ref{lemma1}, which we regard as one of the main results of this work.

\noindent An aspect of the problem, that would call for further developments, is the detailed description of the invasion, in other words quantitative estimates on the convergence to the steady solution. This would be especially interesting if infinitely many patches are favourable - a description of the steady states in this situation is, by the way, still to be developped, although not totally out of reach from the arguments of the present work.   We mention the periodic setting treated in \cite{papier2}, where exponential invasion is proved. To understand how things have to be modified outside this setting is still, to our knowledge, open.
  \subsection{Outline of the paper}
\noindent We first provide in Section \ref{sign_lambda} a study on the one-dimensional case: the dependence of the sign of $\xi_\alpha (\Omega_{1,2,\mu})$ on the parameters $|\underline{\Omega}_{i,\alpha}|$, $\mu$ and $\alpha$. Next, in Section \ref{section2} the proof of Theorem \ref{lemma1}. Section \ref{sectionUniqueness} is devoted to the proof of Theorem \ref{theorem1Chap4}. In section \ref{section4}, we establish Theorem \ref{propagation}.  We also provide some numerics in order to illustrate Corollary \ref{theorem_3} and Theorem \ref{theorem_4}. \\
In order to be more readable, we do not write the principal eigenvalue $P.V.$ and the constant $C_\alpha$ in front of the fractional Laplacian. When there is no possible confusion, the constants denoted by $c$ and $C$ may change from one line to another. In all this work, when it is not precised, $\phi$ is usually used to denote a principal eigenfunction. Moreover, it is taken positive and with unit $L^2$ norm.

\section{Dependence of the sign of $\xi_\alpha (\Omega_{1,2,\mu})$ on the parameters}\label{sign_lambda}

In subsection \ref{dependence on Omega_i and mu}, we focus on the dependence of $\xi_\alpha(\Omega_{1,2,\mu})$ on the size of $\Omega_i$ and $\mu$. We provide the proof of Theorem \ref{theorem_3} and Proposition \ref{theorem_1}. Then, in subsection \ref{dependence_alpha}, we provide the proof of Theorem \ref{theorem_4}. Finally, subsection \ref{subsection2-3} is devoted to the investigation of the description of the dynamics of $\xi_{\alpha}(\Omega_{1,2,\mu})$ when $(\mu, \alpha) \rightarrow (0,1)$ at the same time. 

\subsection{Dependence on $|\Omega_i|$ and $\mu$}\label{dependence on Omega_i and mu}

We start the proof of Theorem \ref{theorem_3}  with the following classical  (and useful) bounds on $\xi_\alpha(\Omega_{1,2,\mu})$.
\begin{proposition}\label{theorem_1}
Under the previous hypothesis, there holds for all $\mu \geq 0$
\[\xi_\alpha (\Omega_{1,2,\mu}) < \min \ (\underline{\xi}_{1,\alpha}, \underline{\xi}_{2,\alpha} ).\]
\end{proposition}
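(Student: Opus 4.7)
Without loss of generality assume $\underline{\xi}_{1,\alpha}\leq \underline{\xi}_{2,\alpha}$, so the claim reduces to $\xi_\alpha(\Omega_{1,2,\mu}) < \underline{\xi}_{1,\alpha}$. The strategy is purely variational: construct an explicit competitor in $H^\alpha_0(\Omega_{1,2,\mu})$ whose Rayleigh quotient is strictly below $\underline{\xi}_{1,\alpha}$, and invoke the min-characterisation of $\xi_\alpha(\Omega_{1,2,\mu})$ stated just before the proposition.

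Normalise the individual eigenfunctions $\underline{\phi}_{i,\alpha}$ to be positive and of unit $L^2$ norm, and extend each by $0$ outside $\Omega_i$. For $\varepsilon > 0$ small, consider
\[\psi_\varepsilon := \underline{\phi}_{1,\alpha} + \varepsilon\,\underline{\phi}_{2,\alpha}.\]
Since $\Omega_1$ and $\Omega_2$ have disjoint interiors, the $L^2$ cross term vanishes and $\|\psi_\varepsilon\|_2^2 = 1+\varepsilon^2$. Denote by $\mathcal{E}(\phi,\psi) := \tfrac12 \iint (\phi(x)-\phi(y))(\psi(x)-\psi(y))|x-y|^{-1-2\alpha}\,dx\,dy$ the bilinear form associated to the Gagliardo seminorm $[\cdot]_\alpha$. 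A direct case analysis on the partition of $\mathbb{R}\times\mathbb{R}$ by the blocks $\Omega_1\times\Omega_1$, $\Omega_2\times\Omega_2$, $\Omega_1\times\Omega_2$, $\Omega_2\times\Omega_1$ and the exterior (all blocks other than the two mixed ones contribute $0$, since then one factor of the product of differences is identically $0$) gives
\[\mathcal{E}(\underline{\phi}_{1,\alpha},\underline{\phi}_{2,\alpha}) = -A, \qquad A := \int_{\Omega_1}\!\int_{\Omega_2}\frac{\underline{\phi}_{1,\alpha}(x)\,\underline{\phi}_{2,\alpha}(y)}{|x-y|^{1+2\alpha}}\,dx\,dy > 0.\]
Bilinearity then yields $[\psi_\varepsilon]_\alpha = \underline{\xi}_{1,\alpha} - 2\varepsilon A + \varepsilon^2\,\underline{\xi}_{2,\alpha}$, whence
\[R_\alpha(\psi_\varepsilon) = \frac{\underline{\xi}_{1,\alpha} - 2\varepsilon A + \varepsilon^2\,\underline{\xi}_{2,\alpha}}{1+\varepsilon^2} = \underline{\xi}_{1,\alpha} - 2\varepsilon A + O(\varepsilon^2).\]
As $A > 0$, picking $\varepsilon>0$ small enough produces $R_\alpha(\psi_\varepsilon) < \underline{\xi}_{1,\alpha}$, and the variational characterisation of $\xi_\alpha(\Omega_{1,2,\mu})$ closes the argument.

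The only delicate point is the admissibility of $\psi_\varepsilon$, i.e.\ the finiteness of $A$ (equivalently $[\psi_\varepsilon]_\alpha<+\infty$), which is what places $\psi_\varepsilon$ in $H^\alpha_0(\Omega_{1,2,\mu})$. For every $\mu>0$ this is immediate: the kernel is bounded by $(2\mu)^{-1-2\alpha}$ on $\Omega_1\times\Omega_2$, so $A<+\infty$ trivially. The genuine obstacle is the degenerate case $\mu=0$, in which the two patches share a contact point $p$. Using the classical sharp boundary behaviour $\underline{\phi}_{i,\alpha}(x)\asymp \mathrm{dist}(x,\partial\Omega_i)^\alpha$ and the one-dimensional change of variables $u=s/(s+t)$, $r=s+t$ near $p$, the singular factor $s^\alpha t^\alpha (s+t)^{-1-2\alpha}$ becomes $u^\alpha(1-u)^\alpha/r\cdot r\,du\,dr = u^\alpha(1-u)^\alpha\,du\,dr$, which is integrable; this secures $A<+\infty$ also at $\mu=0$. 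I regard this local integrability check at the contact point as the sole real technical difficulty in the proof; everything else is elementary algebra on the bilinear form $\mathcal{E}$.
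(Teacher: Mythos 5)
Your proof is correct, and it takes a genuinely different (and in fact more complete) route than the paper's. The paper simply observes that $H^\alpha_0(\Omega_i)\subset H^\alpha_0(\Omega_{1,2,\mu})$, plugs $\underline{\phi}_{i,\alpha}$ itself into the Rayleigh quotient, and obtains $\xi_\alpha(\Omega_{1,2,\mu})\leq R_\alpha(\underline{\phi}_{i,\alpha})=\underline{\xi}_{i,\alpha}$ — a non-strict inequality. Strictness then has to be supplied by a tacit appeal to Krein--Rutman: if equality held, $\underline{\phi}_{i,\alpha}$ would be a principal eigenfunction on $\Omega_{1,2,\mu}$, yet it vanishes on the other patch, contradicting positivity of the principal eigenfunction. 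You instead perturb to $\psi_\varepsilon=\underline{\phi}_{1,\alpha}+\varepsilon\underline{\phi}_{2,\alpha}$, expand $\mathcal{E}(\psi_\varepsilon,\psi_\varepsilon)$ by bilinearity, show by the block decomposition of $\mathbb{R}\times\mathbb{R}$ that the cross term is $-A<0$, and obtain $R_\alpha(\psi_\varepsilon)=\underline{\xi}_{1,\alpha}-2\varepsilon A+O(\varepsilon^2)<\underline{\xi}_{1,\alpha}$ for small $\varepsilon$. This establishes strictness directly and quantitatively, without invoking Krein--Rutman. Your extra care over finiteness of $A$ at $\mu=0$ (via the boundary behaviour $\underline{\phi}_{i,\alpha}\asymp\delta^\alpha$ and the change of variables $u=s/(s+t)$, $r=s+t$, which turns $s^\alpha t^\alpha(s+t)^{-1-2\alpha}\,ds\,dt$ into $u^\alpha(1-u)^\alpha\,du\,dr$) is a legitimate technical point that the paper's one-line proof does not address, even though the proposition is stated for all $\mu\geq0$. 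In short: the paper's argument is shorter but leaves the strictness and the $\mu=0$ integrability implicit; yours makes both explicit, at the cost of the cross-term computation.
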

\noindent \begin{proof}
For all $\mu>0$, it is straightforward that $H^\alpha_0(\Omega_i) \subset H^\alpha_0(\Omega)$. Thus, we deduce that the function $\underline{\phi}_{i,\alpha} \in H^\alpha_0(\Omega)$. We conclude thanks to the Rayleigh quotient that we have:
\[ \xi_\alpha (\Omega_{1,2,\mu }) = \underset{\phi \in H^\alpha_0(\Omega) \backslash \left\lbrace 0 \right\rbrace }{\min } R_\alpha (\phi) \leq R_\alpha (\underline{\phi}_{i,\alpha}) = \underline{\xi}_{i,\alpha} .\]
\end{proof}

\begin{proof}[Proof of Theorem \ref{theorem_3}]
We first show the monotonicity. Next, we prove the continuity for $\mu > 0$ finally we demonstrate the continuity up to $\mu = 0$. \\
\textbf{Proof of the monotonicity of $(\mu \mapsto  \xi_\alpha (\Omega_{1,2,\mu }))$.}  Let $\mu_1$ and $\mu_2$ be two positive constants such that $\mu_1 < \mu_2$. We will consider $\Omega_{1,2,\mu_1}$ and $\Omega_{1,2,\mu_2}$, that we write explicitely (possibly up to a translation) in order to fix  ideas:
\[\Omega_{1,2,\mu_1}=]-A_1-\mu_1, -\mu_1[\cup ] \mu_1, A_2+\mu_1[ \ \text{ and } \ \Omega_{1,2,\mu_2}=]-A_1-\mu_2,-\mu_2[\cup ] \mu_2, A_2+\mu_2[.\]
We define $\mu = \mu_2 - \mu_1$. We recall that for all $j \in \left\lbrace 1, 2 \right\rbrace$
\[\xi_\alpha (\Omega_{1,2,\mu_j }) = \underset{\phi \in H^\alpha_0(\Omega_{1,2,\mu_j}) \backslash \left\lbrace 0 \right\rbrace}{\min} R_\alpha (\phi)= R_\alpha (\phi_{1,2,\mu_j,\alpha}).\]
The idea is to translate each component of the support of $ \phi_{1,2,\mu_2,\alpha}$ in $\Omega_{1,2,\mu_1}$. Thus we define 

\begin{equation}\label{defini_mu_k}
\phi^\mu(x):= \left\{
    \begin{array}{ll}
			 \phi(x - \mu_2 + \mu_1) = \phi_{1,2,\mu_2,\alpha}(x-\mu) \										&\mbox{ if } x \in ]-\infty, 0[,\\
	          \phi(x + \mu_2 - \mu_1) = \phi(x+\mu) \ &\mbox{ if }x \in [0, +\infty[
     \end{array}
\right.
\end{equation} 
for any $\phi \in H^\alpha_0(\Omega_{1,2,\mu_2})$. We easily observe that $\phi^{\mu}$ belongs to $H^\alpha_0(\Omega_{1,2,\mu_1})$. Next, we remark that $\| \phi^\mu_{1,2,\mu_2,\alpha}\|_2 = \| \phi_{1,2,\mu_2,\alpha}\|_2 = 1$. The aim is to show that 
\begin{equation}\label{equation:lambda:mu:increasing}
[ \phi^\mu_{1,2,\mu_2,\alpha}]_\alpha \leq [ \phi_{1,2,\mu_2,\alpha} ]_{\alpha}.
\end{equation}
If \eqref{equation:lambda:mu:increasing} holds true, since the $L^2$ norm is conserved, we obtain from \eqref{def_R} that
\[ R_\alpha(\phi^\mu_{1,2,\mu_2,\alpha}) = \frac{[ \phi^\mu_{1,2,\mu_2,\alpha}]_\alpha }{\|\phi^\mu_{1,2,\mu_2,\alpha} \|_2^2}  \leq \frac{ [ \phi_{1,2,\mu_2,\alpha} ]_{\alpha}}{ \|\phi_{1,2,\mu_2,\alpha}\|_2^2} =  R_\alpha(\phi_{1,2,\mu_2,\alpha})  .\]
It allows us to conclude that 
\[\xi_\alpha (\Omega_{1,2,\mu_1 }) = \underset{\phi \in H^\alpha_0(\Omega_{1,2,\mu_1}) \backslash \left\lbrace 0 \right\rbrace}{\min} R_\alpha (\phi) \leq R_\alpha ( \phi^\mu_{1,2,\mu_2,\alpha} ) \leq R (\phi_{1,2,\mu_2,\alpha}) = \xi_\alpha (\Omega_{1,2,\mu_2 }).\]

Thus, we prove \eqref{equation:lambda:mu:increasing}. We will denote 
\[k_\varepsilon (x) = \max (\varepsilon, |x|^{1+2\alpha}) \text{ such that } [ \phi]_\alpha =\underset{\varepsilon \rightarrow 0}{\lim} { \ \frac{1}{2} \int_\mathbb{R} \int_\mathbb{R} \frac{(\phi(x) - \phi(y))^{2}}{k_\varepsilon(x-y)}dydx}.\]
For all $\varepsilon >0$ and for all positive $\phi \in H^\alpha_0(\Omega_{1,2,\mu_2})$ thanks to the Fubini-Tonelli theorem, we have 
\begin{align*}
\frac{1}{2}\int_\mathbb{R} \int_\mathbb{R} \frac{(\phi^\mu(x) - \phi^\mu(y))^{2}}{k_\varepsilon(x-y)}dydx & = \frac{1}{2} \int_\mathbb{R} \int_\mathbb{R} \frac{\phi^\mu(x)^2 + \phi^\mu(y)^{2} - 2 \phi^\mu(x)\phi^\mu(y)}{k_\varepsilon(x-y)}dydx \\
&=   \int_\mathbb{R} \phi^\mu(x)^2  \int_\mathbb{R} \frac{1}{k_\varepsilon(x-y)}dydx -   \int_\mathbb{R} \int_\mathbb{R} \frac{ \phi^\mu(x)\phi^\mu(y)}{k_\varepsilon(x-y)}dydx \\
&=  C_\varepsilon \int_\mathbb{R} \phi^\mu(x)^2 dx -   \int_{\Omega_{1,2,\mu_1}} \int_{\Omega_{1,2,\mu_1}} \frac{ \phi^\mu(x)\phi^\mu(y)}{k_\varepsilon(x-y)}dydx .
\end{align*}
Here, $C_\varepsilon = \int_{\mathbb{R}} \frac{1}{k_\varepsilon(z)}dz$ is a constant independent of the choice of $\phi$. Next we have :
\begin{equation}\label{pf_lambda_mu_croissant_eq1}
\begin{aligned}
\int_\mathbb{R} \int_\mathbb{R} \frac{(\phi^\mu(x) - \phi^\mu(y))^{2}}{k_\varepsilon(x-y)}dydx & =  C_\varepsilon \int_\mathbb{R} \phi^\mu(x)^2 dx  -\int_{\Omega^{\mu_1}_1} \int_{\Omega^{\mu_1}_1} \frac{ \phi^\mu(x)\phi^\mu(y)}{k_\varepsilon(x-y)}dydx \\
&- \int_{\Omega^{\mu_1}_2} \int_{\Omega^{\mu_1}_2} \frac{ \phi^\mu(x)\phi^\mu(y)}{k_\varepsilon(x-y)}dydx  - 2  \int_{\Omega^{\mu_1}_2} \int_{\Omega^{\mu_1}_1} \frac{ \phi^\mu(x)\phi^\mu(y)}{k_\varepsilon(x-y)}dydx  
\end{aligned}
\end{equation}
Thanks to a change of variable, we obtain that for all $i \in \left\lbrace 1,2 \right\rbrace $:
\begin{equation}\label{pf_lambda_mu_croissant_eq2}
\int_{\Omega^{\mu_1}_i} \int_{\Omega^{\mu_1}_i} \frac{ \phi^\mu(x)\phi^\mu(y)}{k_\varepsilon(x-y)}dydx =  \int_{\Omega^{\mu_2}_i} \int_{\Omega^{\mu_2}_i} \frac{ \phi(x)\phi(y)}{k_\varepsilon(x-y)}dydx.
\end{equation}
Since for $(x, y) \in \Omega^{\mu_2}_1 \times \Omega^{\mu_2}_2$ we have $ k_\varepsilon (x-y+2\mu) \leq k_\varepsilon(x-y) $, we obtain 
\begin{equation}\label{pf_lambda_mu_croissant_eq3}
\int_{\Omega^{\mu_2}_2} \int_{\Omega^{\mu_2}_1} \frac{ \phi(x)\phi(y)}{k_\varepsilon(x-y)}dydx \leq \int_{\Omega^{\mu_2}_2} \int_{\Omega^{\mu_2}_1} \frac{ \phi(x)\phi(y)}{k_\varepsilon(x-y+2\mu)}dydx= \int_{\Omega^{\mu_1}_2} \int_{\Omega^{\mu_1}_1} \frac{ \phi^\mu(x)\phi^\mu(y)}{k_\varepsilon(x-y)}dydx.
\end{equation}
Inserting \eqref{pf_lambda_mu_croissant_eq2} and \eqref{pf_lambda_mu_croissant_eq3} in \eqref{pf_lambda_mu_croissant_eq1}, we conclude that for all $\varepsilon>0$ and all positive $\phi \in H^\alpha_0(\Omega_{1,2,\mu_2})$, we have 
\[ \int_\mathbb{R} \int_\mathbb{R} \frac{(\phi^\mu(x) - \phi^\mu(y))^{2}}{k_\varepsilon(x-y)}dydx \leq \int_\mathbb{R} \int_\mathbb{R} \frac{(\phi(x) - \phi(y))^{2}}{k_\varepsilon(x-y)}dydx. \]
Sending  $\varepsilon$ to 0, we conclude that \eqref{equation:lambda:mu:increasing} holds true and the conclusion follows.

\vspace{0.5cm}

\noindent \textbf{Proof of the continuity of $(\mu \mapsto \xi_\alpha (\Omega_{1,2,\mu }))$ for $\mu>0$.} Let $\mu>0$ and $(\mu_k)_{k \in \mathbb{N}} $ with $\mu_k \underset{k \rightarrow +\infty}{\longrightarrow} \mu$ and $\mu_k >0$. According to Proposition \ref{theorem_1} and the Krein-Rutman Theorem, we have $0 \leq \xi_\alpha(\Omega_{1,2,\mu}) < \min (\underline{\xi}_1, \underline{\xi}_2)$. Up to a subsequence, $\xi_\alpha (\Omega_{1,2,\mu_k })$ converges to $\xi_\infty$. We normalise $\phi_{1,2,\mu_k,\alpha}$ such that $\|\phi_{1,2,\mu_k,\alpha}\|_{L^2}=1$. Next, thanks to the Rayleigh quotient, we obtain that 
\[ [\phi_{1,2,\mu_k,\alpha} ]_\alpha \leq \xi_\alpha (\Omega_{1,2,\mu_k }) \leq \min (\underline{\xi}_1, \underline{\xi}_2). \]
Thus, we deduce that $\| \phi_{1,2,\mu_k,\alpha} \|_{H^\alpha} $ is bounded. Up to a new subsequence, $\phi_{1,2,\mu_k,\alpha}$ converges strongly in $L^2(\mathbb{R})$ and weakly in $H^\alpha(\mathbb{R})$ to $\phi_\infty$. It is straightforward to obtain that $\phi_\infty\geq 0$ and $\phi_\infty =0 $ in $(\Omega_{1,2,\mu})^c$. Moreover, since $\Omega_{1,2,\mu_k}$ converges to $\Omega_{1,2,\mu}$ we deduce that for all compact set $K$ of $\Omega_{1,2,\mu}$, there exists $k_0 \in \mathbb{N}$ such that for $k>k_0$, we have 
\[K \subset \Omega_{1,2,\mu_k} \text{ and } 1-\varepsilon \leq  \|\phi_{1,2,\mu_k,\alpha} \|_{L^2(K)} \]
with $\varepsilon$ as small as we want. We deduce that $\| \phi_\infty \|_{L^2(\Omega_{1,2,\mu})} =1$.  With the same idea, we get that in all compact set $K$ of $\Omega_{1,2,\mu}$, $\phi_\infty$ is a weak solution to 
\begin{equation*}
\left\lbrace 
\begin{aligned}
(-\Delta)^\alpha \phi_\infty & = \xi_\infty \phi_\infty &&  \text{ for } x \in K,  \\
\phi_\infty &= 0 && \text{ for } x \in \Omega_{1,2,\mu}^c, \\
\phi_\infty \geq 0,& \ \|\phi_\infty\|_{L^2} = 1.
\end{aligned}
\right.
\end{equation*}
Since it is true in all compact set of $\Omega_{1,2,\mu}$ we conclude that
\begin{equation}\label{phiinfty}
\left\lbrace 
\begin{aligned}
(-\Delta)^\alpha \phi_\infty & = \xi_\infty \phi_\infty &&  \text{ for } x \in \Omega_{1,2,\mu}, \\
\phi_\infty &= 0 && \text{ for } x \in \Omega_{1,2,\mu}^c, \\
\phi_\infty \geq 0,& \ \|\phi_\infty\|_{L^2} = 1.
\end{aligned}
\right.
\end{equation}
Thanks to the fractional elliptic regularity (see \cite{Caffarelli:Silvestre:Regularity}), we obtain that \eqref{phiinfty} is true in the strong sense. By uniqueness of the eigenvalue associated to a strictly positive eigenfunction, we finally conclude that $\xi_\infty = \xi_\alpha (\Omega_{1,2,\mu })$ and $\phi_\infty = \phi_{1,2,\mu, \alpha}$. 

\bigbreak

\noindent \textbf{Proof of the continuity up to $\mu=0$.} Let $(\mu_k)_{k \in \mathbb{N}}$ be a sequence such that $\mu_k \underset{ k \rightarrow +\infty}{\longrightarrow} 0$ and $\mu_k > 0$. Following the same idea than in the previous part, we find that $\xi_\alpha (\Omega_{1,2,\mu_k })$ converges to some $\xi_\infty$ and $\phi_{1,2,\mu_k,\alpha}$ converges to $\phi_\infty$ with $\phi_\infty$ a bounded solution of 
 \begin{equation}\label{phiinftydeux}
\left\lbrace 
\begin{aligned}
(-\Delta)^\alpha \phi_\infty & = \xi_\infty \phi_\infty &&  \text{ for } x \in ]-A,A[ \backslash \left\lbrace 0 \right\rbrace, \\
\phi_\infty &= 0 && \text{ for } x \in ]-A,A[^c, \\
\phi_\infty \geq 0,& \ \|\phi_\infty\|_{L^2} = 1.
\end{aligned}
\right.
\end{equation}
We consider two cases: $\alpha < \frac{1}{2}$ and $\alpha \geq \frac{1}{2}$. \\
\textbf{Case 1 $\alpha < \frac{1}{2}.$ } It is sufficient to remark that $\phi_{1,2,\mu_k,\alpha}$ converges to $\phi_\infty$ in $H^\alpha( ]-A,A[)$ (see the comments after the proof of Lemma 16.1 p. 82 of \cite{tartar}). Indeed, since the set of functions $\left\lbrace \psi \in H^\alpha_0( ]-A,A[) | \psi(0) = 0 \right\rbrace$ is dense in $H^\alpha_0( ]-A,A[)$, we conclude by compactness as in the case $\mu>0$. \\
\textbf{Case 2 $\alpha \geq \frac{1}{2}$.  } The idea is to show that 0 is a removable singularity in the extended problem, as as introduced in \cite{Caffarelli:Silvestre:Extension}. This will allow us to conclude that $\phi_\infty = \phi_{1,2,0,\alpha}$ and $\xi_\infty = \xi_\alpha (\Omega_{1,2,0 })$. The inspiration for the whole proof comes from Serrin \cite{Serrin}. \\
So, let $v$ be the solution of 
\begin{equation*}
\left\lbrace 
\begin{aligned}
-\mathrm{div} (y^a \nabla v ) &= 0 && \text{ for } (x,y) \in \mathbb{R}\times \mathbb{R}^+ \\  
v(x, 0) &= \phi_\infty(x) && \text{ for } x \in \mathbb{R}, 
\end{aligned}
\right.
\qquad \text{ with } a= 1 -2\alpha.
\end{equation*}
From \cite{Caffarelli:Silvestre:Extension} we have that, for all $x \in ]-A,A[ \backslash \left\lbrace 0 \right\rbrace$:
\[\underset{ y \rightarrow 0}{\lim}\  y^a\partial_y  v(x,y) =  - (-\Delta)^\alpha \phi_\infty (x) = - \xi_\infty \phi_\infty(x) = - \xi_\infty v(x,0).\]
We define $w$ as the solution of the following equation 
\begin{equation}\label{equation_petitw}
\left\lbrace 
\begin{aligned}
-\mathrm{div} (y^a \nabla w ) &= 0 &&\text{ for } (x,y) \in ]-L,L[ \times ]0,L[ \\
w(x, y) &= v(x,y)  &&\text{ for } (x,y) \in \partial ( ]-L,L[ \times ]0,L[ ) \cap \left\lbrace y >0 \right\rbrace,\\
\underset{ y \rightarrow 0}{\lim} \ y^a \partial_y w(x,y)&=  -\underset{ y  \rightarrow 0 }{\lim} \  \xi_\infty \  w (x,y) && \text{ for } x \in ]-L, L[,
\end{aligned}
\right.
\end{equation}
where $L \in ]0, A [$. That $w$  exists and is unique is a consequence of the implicit function Theorem   for $L$ small enough. Define $W=v-w$ such that 
\begin{equation}\label{equation_W}
\left\lbrace 
\begin{aligned}
-\mathrm{div} (y^a \nabla W ) &= 0 &&\text{ for } (x,y) \in ]-L,L[ \times ]0,L[\\
W(x, y) &= 0  &&\text{ for } (x,y) \in \partial(]-L,L[ \times ]0,L[)\cap \left\lbrace y >0\right\rbrace,\\
\underset{ y \rightarrow 0}{\lim} \ y^a \partial_y W(x,y)& =  -\underset{ y  \rightarrow 0 }{\lim} \ \xi_\infty \  W (x,y) && \text{ for } x \in ]-L,L[ \backslash \left\lbrace 0 \right\rbrace. 
\end{aligned}
\right.
\end{equation}
We are going to prove that $W=0$. 
For this purpose, we split $W$ into two parts: $W = W_1 + W_2$. The function $W_1$ is solution of the following equation:
\begin{equation}\label{equation_W1}
\left\lbrace 
\begin{aligned}
-\mathrm{div} (y^a \nabla W_1 ) &= 0 &&\text{ for } (x,y) \in B(0, \varepsilon)^+, \\
W_1(x, y) &= W(x,y)  &&\text{ for } (x,y) \in \partial B(0,\varepsilon)^+,\\
\underset{ y \rightarrow 0}{\lim} \ y^a \partial_y W_1(x,y)& =  0  && \text{ for } x \in  ]-\varepsilon,\varepsilon[
\end{aligned}
\right.
\end{equation}
where $\varepsilon \in ]0, \frac{L}{2}[$ will be chosen later on.\\
Next, we focus the study of $W_2$ on the domain $\left( ]-L,L[ \times ]0,L[ \right) \backslash B(0,\varepsilon)^+$. The equation for $W_2$ is 
\begin{equation}\label{equation_W2}
\left\lbrace 
\begin{aligned}
-\mathrm{div} (y^a \nabla W_2 ) &= 0 &&\text{ for } (x,y) \in ( ]-L,L[ \times ]0,L[ )\backslash B(0, \varepsilon)^+ \\
W_2(x, y) &= 0  &&\text{ for } (x,y) \in \partial (]-L,L[ \times ]0,L[ ) \cap \left\lbrace y > 0 \right\rbrace,\\
W_2 (x,y) &= W_1(x,y) && \text{ for } (x,y) \in \partial B(0, \varepsilon)^+\\
\underset{ y \rightarrow 0}{\lim} \ y^a \partial_y W_2(x,y)& =  -\underset{ y  \rightarrow 0 }{\lim} \  \xi_\infty W (x,y)  && \text{ for } x \in ]-L,L[ \backslash ]-\varepsilon, \varepsilon[. 
\end{aligned}
\right.
\end{equation}
We underline that $W_2 \underset{ \varepsilon \rightarrow  0 }{\longrightarrow} W$ weakly. Therefore, since $W_1  = W - W_2$, we deduce that $W_1 \underset{ \varepsilon \rightarrow 0}{\longrightarrow } 0$ weakly also.
%
In the following, we denote by 
\begin{align*}
    &\mathcal{D} = ]-L,L[ \times ]0,L[ \backslash B(0, \varepsilon) , &&\qquad  \Gamma_1 = \partial \left(]-L,L[ \times ]0,L[ \right) \  \backslash \ \left( ]-L,L[ \times \left\lbrace 0 \right\rbrace \right), \\
    &\Gamma_2 = ]-L, -\varepsilon[ \cup ]\varepsilon,L[, \hspace{2cm} \text{ and } &&\qquad  \Gamma_3 = \partial B(0, \varepsilon)^+.
\end{align*}

\begin{figure}[h!]
    \centering
    \includegraphics[width=5cm,height=5cm]{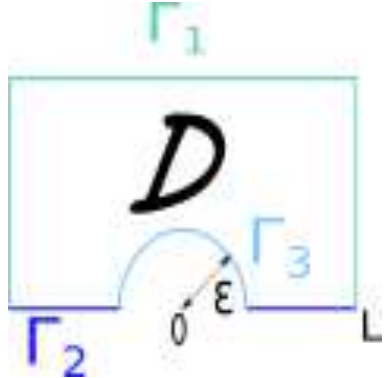}
    \caption{The set under study}
    \label{fig:my_labelchap4}
\end{figure}

The aim of the rest of the proof is to prove that $W_2 \underset{\varepsilon \rightarrow 0}{\longrightarrow } 0 $. If it holds true, the conclusion follows. For this purpose, we use the superposition principle and we split the study of $W_2$ into two parts : $W_2'$ and $W_2''$ which are the respective solutions of

\begin{equation}\label{equation_W2P}
\left\lbrace 
\begin{aligned}
-\mathrm{div} (y^a \nabla W_2' ) &= 0 &&\text{ for } (x,y) \in \mathcal{D} \\
W_2'(x, y) &= 0  &&\text{ for } (x,y) \in \Gamma_1,\\
W_2' (x,y) &= W_1(x,y) && \text{ for } (x,y) \in \Gamma_3\\
\underset{ y \rightarrow 0}{\lim} \ y^a \partial_y W_2'(x,y)& = 0&& \text{ for } x \in \Gamma_2
\end{aligned}
\right.
\end{equation}
and 
\begin{equation}\label{equation_W2PP}
\left\lbrace 
\begin{aligned}
-\mathrm{div} (y^a \nabla W_2'' ) &= 0 &&\text{ for } (x,y) \in \mathcal{D} \\
W_2''(x, y) &= 0  &&\text{ for } (x,y) \in \Gamma_1,\\
W_2'' (x,y) &= 0 && \text{ for } (x,y) \in \Gamma_3,\\
\underset{ y \rightarrow 0}{\lim} \ y^a \partial_y W_2''(x,y)& =  -\underset{ y  \rightarrow 0 }{\lim}  \ \xi_\infty W (x,y)  && \text{ for } x \in \Gamma_2. 
\end{aligned}
\right.
\end{equation}
Since, $W_1$ tends weakly to $0$ as $\varepsilon \rightarrow 0$, it follows that $W_2'$ tends also weakly to $0$. It remains to prove that $W_2''$ vanishes.\\
Let $\phi$ be a test function in $H^1_0(\mathcal{D}, |y|^a )$ 
(see Chapter 1 of \cite{Potential_theory} for the general framework of weighted Sobolev spaces). Noticing that for all $\tau >0$ and $(x,y) \in \mathcal{D} \cap \left\lbrace y > \tau \right\rbrace$, we have 
\begin{align*}
\mathrm{div}(\phi |y|^a\nabla W_2'')(x,y) &= \nabla \phi(x,y)  |y|^a \nabla W_2''(x,y)+ \phi(x,y) \mathrm{div}(|y|^a \nabla W_2'')(x,y)\\
 &=  \nabla \phi(x,y)  |y|^a \nabla W_2''(x,y). 
\end{align*}
We deduce that 
\begin{align*}
\int_\mathcal{D} \nabla \phi |y|^a \nabla W_2'' dxdy &= \int_{\mathcal{D} \cap \left\lbrace y > \tau  \right\rbrace} \nabla \phi |y|^a \nabla W_2'' dxdy + \int_{\mathcal{D} \cap \left\lbrace y \leq  \tau  \right\rbrace} \nabla \phi |y|^a \nabla W_2'' dxdy \\
&= \int_{\mathcal{D} \cap \left\lbrace y > \tau  \right\rbrace}\mathrm{div}(\phi |y|^a\nabla W_2'')(x,y) dxdy + \int_{\mathcal{D} \cap \left\lbrace y \leq  \tau  \right\rbrace} \nabla \phi |y|^a \nabla W_2'' dxdy \\
&= \int_{\partial(\mathcal{D} \cap \left\lbrace y > \tau  \right\rbrace)}\phi |y|^a\nabla W_2''(x,y) \cdot\nu  dx + \int_{\mathcal{D} \cap \left\lbrace y \leq  \tau  \right\rbrace} \nabla \phi |y|^a \nabla W_2'' dxdy \\
&= \int_{\mathcal{D} \cap \left\lbrace y = \tau  \right\rbrace}-\phi(x,\tau) |\tau|^a\partial_y W_2''(x,\tau)  dx + \int_{\mathcal{D} \cap \left\lbrace y \leq  \tau  \right\rbrace} \nabla \phi |y|^a \nabla W_2'' dxdy .
\end{align*}
Next, if we let $\tau$ tends to $0$,   the second term tends to 0 thanks to the Cauchy-Schwarz inequality. Recalling that $W = W_1+ W_2' + W_2''$, we finally have obtained the following variational equation:
\begin{equation}\label{var_form}
  \int_\mathcal{D} \nabla \phi\nabla W_2''  |y|^a dxdy - \int_{\Gamma_2}\phi(x,0) \xi_\infty W_2''(x,0) dx = \int_{\Gamma_2} \xi_\infty (W_1(x,0) + W_2'(x,0))\phi(x,0) dx .
\end{equation}
In order to prove the existence and uniqueness of a solution of \eqref{var_form}, we are going to apply the Lax-Milgram theorem. The linear map is the following : 
\[ \Lambda: \phi \in  H^1_0(\mathcal{D} , |y|^a) \rightarrow \xi_\infty \int_{\Gamma_2}  (W_1(x,0) + W_2'(x,0))\phi(x,0) dx .\]
Since $(W_1 + W_2') $ tends weakly to $0$ as $\varepsilon \rightarrow 0$, we deduce that 
\begin{equation}\label{in_L}
\| \Lambda \|_{H^1_0(\mathcal{D})'} \underset{\varepsilon \rightarrow 0}{\longrightarrow} 0. 
\end{equation}
Next, we show that for $L$ small enough, the bilinear form 
\[b:(u,v) \in H^1_0(\mathcal{D}) \times H^1_0(\mathcal{D}) \rightarrow \int_\mathcal{D} \nabla u \nabla v  |y|^a dxdy - \int_{\Gamma_2}u (x,0) \xi_\infty v(x,0) dx\]
is continuous and coercive.\\
First, the Poincar\'{e} inequality implies that $\phi\mapsto (\int_\mathcal{D} \|\nabla \phi \|^2|y|^adxdy)^{\frac{1}{2}}$ is a norm that is equivalent to the usual norm on $H^1_0(\mathcal{D},|y|^a) $ (see equation (1.5) p. 9 of \cite{Potential_theory}). thus the continuity follows.\\
Secondly, we prove that for $L$ small enough, $b$ is coercive. Indeed, since $a = 1-2\alpha < 0$, we have 
\begin{align*}
 \int_{\Gamma_2} u(x,0)^2 dx &= 2\int_{\Gamma_2} \int_{0}^L u(x,y) \partial_{y}u(x,y) \frac{|y|^a}{|y|^a} dy dx \\
 & \leq \frac{2}{L^a} \int_{\Gamma_2} \left( \int_0^L u(x,y)^2 |y|^a dy \right)^{\frac{1}{2}}  \left(\int_0^L \partial_y u(x,y)^2 |y|^a dy \right)^{\frac{1}{2}} dx \\
 &\leq \frac{2}{L^a} \int_{\Gamma_2} \left( \int_0^L u(x,y)^2 |y|^a dy \right)^{\frac{1}{2}}  \left(\int_0^L \partial_y u(x,y)^2 |y|^a dy \right) dx \\
 & \leq \frac{2}{L^a} \left(\int_{\Gamma_2} \int_0^L u(x,y)^2 |y|^a dy dx \right)^{\frac{1}{2}} \left( \int_{\Gamma_2} \int_0^L \partial_y u(x,y)^2 |y|^a dy dx \right) ^\frac{1}{2} \\
 &\leq 2C \mathrm{diam}(\mathcal{D})L^{2\alpha - 1} \int_\mathcal{D} |\nabla  u|^2 |y|^a dxdy .
 \end{align*}
Therefore, for $L$ small enough, we deduce that there exists $c_b>0$ such that  
\[b(u,u)>c_b \int_\mathcal{D} |\nabla u |^2 |y|^adxdy. \]
We conclude thanks to the Lax-Milgram theorem that there exists a unique solution to \eqref{var_form}. Moreover thanks to the estimates of the norm of the solution in the Lax-Milgram theorem and \eqref{in_L}, we deduce that 
\[\| W_2'' \|_{H^1_0(\mathcal{D})} \leq \frac{\|\Lambda\|_{{H^1_0(\mathcal{D})}'}}{c_b} \underset{ \varepsilon \rightarrow 0 }{\longrightarrow } 0.\]
The conclusion follows.

\end{proof}




\subsection{Dependence on $\alpha$}\label{dependence_alpha}

In this subsection, we provide the proof of Proposition \ref{theorem_4}. 

\begin{proof}[Proof of Proposition \ref{theorem_4}] The continuity for $\alpha < 1$ is based on similar arguments as those presented in the proof of the continuity of $\xi_\alpha (\Omega_{1,2,\mu})$ for $\mu > 0$. Therefor, we focus on the continuity up to $\alpha = 1$.\\
First, we establish a monotonicity result. We claim that the function 
\[(\alpha \in ]0,1] \mapsto \mathrm{diam}(\Omega_{1,2,\mu})^{2\alpha} \xi_\alpha(\Omega_{1,2,\mu})) \text{ is increasing,}\]
(where $\mathrm{diam}(\Omega_{1,2,\mu})$ designates the diameter of $\Omega_{1,2,\mu}$). Indeed, since if $\alpha < \beta$ there holds $H^\beta_0(\Omega_{1,2,\mu}) \subset H^\alpha_0(\Omega_{1,2,\mu})$, we deduce that
\begin{align*}
     \xi_\alpha (\Omega_{1,2,\mu}) &\leq \underset{ \phi \in H^\alpha_0(\Omega_{1,2,\mu})}{\min } \mathcal{R}_\alpha(\phi) \\
     &\leq  \mathcal{R}_\alpha (\phi_{1,2,\mu, \beta}) \\
     &= \frac{1}{2}\int_\mathbb{R} \int_\mathbb{R} \frac{(\phi_{1,2,\mu, \beta}(x) - \phi_{1,2,\mu, \beta}(y))^2 }{|x-y|^{d+2\alpha}}dydx \\
     &= \frac{1}{2}\int_\mathbb{R} \int_{\mathbb{R} , |x-y|< \mathrm{diam}(\Omega_{1,2,\mu})} \frac{(\phi_{1,2,\mu, \beta}(x) - \phi_{1,2,\mu, \beta}(y))^2 }{|x-y|^{d+2\alpha}}dydx  \\
     &+ \frac{1}{2}\int_\mathbb{R} \int_{\mathbb{R} , |x-y|\geq \mathrm{diam}(\Omega_{1,2,\mu})} \frac{(\phi_{1,2,\mu, \beta}(x) - \phi_{1,2,\mu, \beta}(y))^2 }{|x-y|^{d+2\alpha}}dydx \\
    &= \frac{\mathrm{diam}(\Omega_{1,2,\mu})^{2(\beta - \alpha)}}{2}\int_\mathbb{R} \int_{\mathbb{R} , |x-y|< \mathrm{diam}(\Omega_{1,2,\mu})} \frac{(\phi_{1,2,\mu, \beta}(x) - \phi_{1,2,\mu, \beta}(y))^2 }{|x-y|^{d+2\beta}}dydx  \\
     &+ \frac{1}{2}\int_\mathbb{R} \phi_{1,2,\mu, \beta}(x) \int_{\mathbb{R} , |x-y|\geq \mathrm{diam}(\Omega_{1,2,\mu})} \frac{1 }{|x-y|^{d+2\alpha}}dydx \\
     &= \frac{\mathrm{diam}(\Omega_{1,2,\mu})^{2(\beta - \alpha)}}{2}\int_\mathbb{R} \int_{\mathbb{R} , |x-y|< \mathrm{diam}(\Omega_{1,2,\mu})} \frac{(\phi_{1,2,\mu, \beta}(x) - \phi_{1,2,\mu, \beta}(y))^2 }{|x-y|^{d+2\beta}}dydx  \\
     &+ \frac{\mathrm{diam}(\Omega_{1,2,\mu})^{-2\alpha}}{2}\int_\mathbb{R} \phi_{1,2,\mu, \beta}(x) ^2dx \\
     &= \frac{\mathrm{diam}(\Omega_{1,2,\mu})^{2(\beta - \alpha)}}{2}\int_\mathbb{R} \int_{\mathbb{R} } \frac{(\phi_{1,2,\mu, \beta}(x) - \phi_{1,2,\mu, \beta}(y))^2 }{|x-y|^{d+2\beta}}dydx  \\
     &= \mathrm{diam}(\Omega_{1,2,\mu})^{2(\beta - \alpha)} \xi_\beta(\Omega_{1,2,\mu}).
\end{align*}
(Note that this last result holds true for $\beta = 1 $ thanks to the limit (2.8) in \cite{Hitchhiker}). \\
Next, we prove the continuity result for $\alpha = 1$. Let $(\alpha_k)_{k \in \mathbb{N}} \in ]\frac{1}{2}, 1[$ be a sequence such that ${\alpha}_k \underset{ k \rightarrow +\infty}{\longrightarrow} 1$. By replacing $\phi_{1,2,\mu, \beta}$ by the $\phi_{1}$ (with $\phi_{1}$ the principal eigenfunction which corresponds to the principal eigenvalue $\min(\xi_1(\Omega_1), \xi_2(\Omega_2)$), we deduce that 
\begin{equation}\label{xialphak}
0 < \xi_{\alpha_k}(\Omega_{1,2,\mu}) \leq  \mathrm{diam}(\Omega)^{2-2\alpha_k}\min(\xi_{1}(\Omega_1), \xi_2(\Omega_2)).
\end{equation}
We deduce that up to an extraction, $\xi_{\alpha_k}(\Omega_{1,2,\mu})$ converges to $\xi_\infty$. We recall that $\| \phi_{1,2,\mu, \alpha_k} \|_2 = 1$. Moreover, for all $k \in \mathbb{N}$, we have 
\begin{align*}
< (-\Delta)^{\alpha_k}\phi_{\alpha_k,1,2,\mu} , \phi_{\alpha_k,1,2,\mu} > &= \xi_{\alpha_k}(\Omega_{1,2,\mu}) \|\phi_{\alpha_k,1,2,\mu}\|_2^2\\
& \leq \mathrm{diam}(\Omega_{1,2,\mu})^{2(1-\alpha_k)}\min(\xi_1(\Omega_1), \xi_1(\Omega_2)).
\end{align*}
Up to an extraction and thanks to the Sobolev embedding (see \cite{Hitchhiker}), $\phi_{\alpha_k,1,2,\mu}$ converges to $\phi_\infty$ in $L^2(\Omega_{1,2, \mu})$. Moreover, the limit satisfies in a weak sense the following equation 
\begin{equation}\label{Phi_infty}
\left\lbrace 
\begin{aligned}
(-\Delta) \phi_\infty(x) &= \xi_\infty \phi_\infty(x) &&\text{ for } x \in \Omega_{1,2, \mu},  \\
\phi_\infty(x) &= 0 && \text{ for } x \in \partial \Omega_{1,2, \mu}.
\end{aligned}
\right.
\end{equation}
By the standard elliptic regularity, we find that \eqref{Phi_infty} is true in a strong sense. Furthermore, thanks to \eqref{xialphak}, we deduce that $\xi_\infty \leq \min(\xi_1(\Omega_1), \xi_2(\Omega_2))$. Since the $L^2$ norm of $\phi_\infty$ is not trivial and by positiveness of $\phi_\infty$ and uniqueness of $\phi_1$, we conclude that $\phi_\infty = \phi_{1}$. 

\end{proof}


\subsection{Dependence on $\mu$ and $\alpha$}\label{subsection2-3}

For this subsection, we denote by $i \in \left\lbrace 1,2 \right\rbrace$ the index of the larger domain (i.e. $|\Omega_i| \geq |\Omega_{3-i}|$). This subsection is devoted to the 
\begin{proof}[Proof of Proposition \ref{Propo_alpha_mu}]
We split the proof of \eqref{lambdastar} in three parts: first, we assume $\xi_* = \xi_{1}(\Omega_{1,2,\mu})$, then we assume $\xi_* = \underline{\xi}_{i,1}$, and finally  we do the general case.\\
\textbf{Proof when $\xi_* = \xi_{1}(\Omega_{1,2,\mu})$. } Let $(\alpha_k)_{k \in \mathbb{N}}$ be any increasing sequence such that $\alpha_k \rightarrow 1$. According to Theorem \ref{theorem_3}, there exists $\underline{\mu_k}(\alpha_k)$ such that
\[\forall \mu \leq \underline{\mu_k}(\alpha_k), \ |\xi_{\alpha_k}(\Omega_{1,2,\mu }) - \xi_{\alpha_k}(\Omega_{1,2,0 })| \leq \frac{1}{k}.\]
We conclude that $(\alpha_k, \underline{\mu_k}(\alpha_k))$ gives the result. \\
\textbf{Proof when $\xi_* = \underline{\xi}_{i,1}$. } Let $(\nu_k)_{k \in \mathbb{N}}$ be any decreasing sequence such that $\nu_k \rightarrow 0$. According to Theorem \ref{theorem_4}, there exists $\overline{\beta_k}(\nu_k)<1$ such that
\[\forall \beta \geq \overline{\beta_k}(\nu_k), \ |\xi_\beta(\Omega_{1,2,\nu_k}) - \underline{\xi}_{i,1}| \leq \frac{1}{k}.\]
We conclude that $( \nu_k, \overline{\beta_k}(\nu_k))$ gives the result. \\
\textbf{Proof when $\xi_* \in ]\xi_1(\Omega_{1,2,0}), \underline{\xi}_{i,1}[$. } Let $(\mu_k, \alpha_k)$ be defined as follows: 
\begin{equation*}
\left\lbrace
\begin{aligned}
&(\mu_0, \alpha_0)=(\mu_1,\alpha_1)=(\mu_2,\alpha_2)=(\frac{1}{2}, \frac{1}{2}), \\
&(\mu_{3k}, \alpha_{3k})=(\mu_{3k-1}, \max(\overline{\beta_{k}}(\mu_{3k-1}), \alpha_{3k-1}, 1-\frac{1}{k})), \\
&(\mu_{3k+1}, \alpha_{3k+1}) = (\mu_{3k+1}, \alpha_{3k}), \\
&(\mu_{3k+2}, \alpha_{3k+2})=(\min(\underline{\mu_{k}}(\alpha_{3k}), \mu_{3k}, \frac{1}{k}), \alpha_{3k}),
\end{aligned}
\right.
\end{equation*}
where we will fix $\mu_{3k+1} \in [\mu_{3k}, \mu_{3k+2}]$ later on. The constants $\underline{\mu_{k}}(\alpha_{3k})$ and $ \overline{\beta_{k}}(\mu_{3k-1})$ are defined respectively in the two first parts of the proof. With a such choice of $(\mu_k, \alpha_k)$, we have 
\[\xi_{\alpha_{3k}}(\Omega_{1,2,\mu_{3k}}) \underset{ k \rightarrow +\infty}{\longrightarrow}  \underline{\xi}_{i, 1} \quad \text{ and } \quad \xi_{\alpha_{3k+2}}(\Omega_{1,2,\mu_{3k+2}}) \underset{ k \rightarrow +\infty}{\longrightarrow}  \xi_1(\Omega_{1,2,0}).\]
Let $k_0 \in \mathbb{N}$ be such that for all $k > k_0$, 
\begin{equation}\label{eq_int_prop_3}
\xi_1(\Omega_{1,2,0}) + \frac{2}{k} < \xi^* < \underline{\xi}_{i,1} - \frac{2}{k}.
\end{equation}
Inequalities \eqref{eq_int_prop_3} implies 
\[\xi_{\alpha_{3k}}(\Omega_{1,2,\mu_{3k}}) < \xi^* < \xi_{\alpha_{3k+2}} (\Omega_{1,2, \mu_{3k+2} }).\] 
Next, we fix $\mu_{3k+1} = \mu_{3k}$ for $k \in \left\lbrace 0, ..., k_0 \right\rbrace$. Since $\mu \mapsto \xi_{\alpha_k}(\Omega_{1,2,\mu})$ is continuous and increasing, and because $\alpha_{3k}=\alpha_{3k+1}$, we deduce thanks to the intermediate value theorem that there exists $\mu_{3k+1} \in ]\mu_{3k}, \mu_{3k+2}[$ such that $\xi_{\alpha_{3k+1}}(\Omega_{1,2,\mu_{3k+1}}) = \xi^*$. We conclude that the sequence $(\xi_k, \gamma_k) = (\mu_{3k+1}, \alpha_{3k+1})$ gives the result. 
\end{proof}

\section{Estimates for steady solutions}\label{section2}

This section is devoted to the proof of Theorem \ref{lemma1}. First, we provide the general strategy of the proof, next we prove intermediate results and finally, we prove Theorem \ref{lemma1}. We underline here that 
\[0 \leq n_+ \leq 1\] 
is easy to obtain thanks to the comparison principle. Therefore,we prove that there exists two constants $c,C>0$ such that, if the function $G(x)$ is given by \eqref{G1}, then we have
\[c \min(\delta(x)^\alpha, \varepsilon_0^\alpha) G(x) \leq n_+(x) \leq C \min(\delta(x)^\alpha, \varepsilon_0^\alpha) G(x) .\]

\noindent We highlight that the subdomains $\underline{\Omega}_{-,k}$ which satisfy \eqref{H6Chap4}  may not be connected. It is one of the interest to consider non-local diffusion instead of local one. Moreover, it may happen that the principal eigenvalue of $(-\Delta)^\alpha - Id$ defined in the connected components of $\underline{\Omega}_{-,k}$ are all positive however the principal eigenvalue of $(-\Delta)^\alpha - Id$ defined in the whole domain $\underline{\Omega}_{-,k}$ is negative. \\
A consequence of Theorem \ref{theorem_3} is the following: if $d=1$ and $\underline{\Omega}_- = \Omega_{1,2,\mu}$ (where $\underline{\Omega}_-$ refers to \eqref{H6Chap4} and $\Omega_{1,2,\mu}$ to \eqref{Omgega12mu}), the following assertions hold:
\begin{enumerate}
    \item we must have $\lambda_{\alpha}(\Omega_{1,2,\mu} ) < 0$, 
    \item if $\lambda_{\alpha}(\Omega_1)<0$ or $\lambda_{\alpha}(\Omega_2)<0$, then $\underline{\Omega}_- = \Omega_{1,2,\mu}$ holds true for all $\mu>0$,
    \item if $\lambda_{\alpha}(\Omega_1)>0$ and $\lambda_{\alpha}(\Omega_2)>0$, then there exists $\mu_0>0$ such that $\mu < \mu_0$.
\end{enumerate}
The first point is a consequence of the continuity up to $\mu = 0$ and the monotonicity of $(\mu \mapsto \lambda_\alpha (\Omega_{1,2,\mu}))$ established in Theorem \ref{theorem_3}. The second point is a direct consequence of Proposition \ref{theorem_1}. Finally the last point follows thanks to the continuity of the application $ \mu \mapsto \lambda_{\alpha} (\Omega_{1,2,\mu})$ and the following Lemma
\begin{lemma}\label{Lemma2}
Let $\Omega_{1,2,\mu}$ be a set as those introduced in \eqref{Omgega12mu}. If $\lambda_{\alpha}(\Omega_1), \lambda_\alpha (\Omega_2) > 0$, then there exists a distance $\mu_0 \geq 0$ such that $\lambda_\alpha(\Omega_{1,2,\mu_0}) = 0$. 
\end{lemma}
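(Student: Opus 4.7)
The plan is to apply the intermediate value theorem to the function $f(\mu) := \lambda_\alpha(\Omega_{1,2,\mu}) = \xi_\alpha(\Omega_{1,2,\mu}) - 1$ on $[0,+\infty)$. Theorem \ref{theorem_3} already provides continuity and strict monotonicity of $f$ there, so all that remains is to determine the signs of $f(0)$ and $\lim_{\mu\to+\infty} f(\mu)$.

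The key step is to prove
\[ \lim_{\mu \to +\infty} \xi_\alpha(\Omega_{1,2,\mu}) = \min(\underline{\xi}_{1,\alpha}, \underline{\xi}_{2,\alpha}), \]
so that by the hypothesis $\lambda_\alpha(\Omega_i) > 0$ one has $\lim_{\mu \to +\infty} f(\mu) > 0$. The upper bound is Proposition \ref{theorem_1}. For the matching lower bound, I would decompose the $L^2$-normalised principal eigenfunction as $\phi_{1,2,\mu,\alpha} = \phi^{(1)} + \phi^{(2)}$ with $\phi^{(i)} := \phi_{1,2,\mu,\alpha}\cdot 1_{\Omega_i}$ (extended by zero outside $\Omega_i$). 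Using the disjointness of the supports of $\phi^{(1)}$ and $\phi^{(2)}$, expanding $(\phi(x)-\phi(y))^2$ and integrating yields
\[ [\phi_{1,2,\mu,\alpha}]_\alpha = [\phi^{(1)}]_\alpha + [\phi^{(2)}]_\alpha - 2 \int_{\Omega_1}\int_{\Omega_2} \frac{\phi^{(1)}(x)\phi^{(2)}(y)}{|x-y|^{1+2\alpha}}\,dy\,dx. \]
On the cross-integration domain $|x-y| \geq 2\mu$, so the last term is $O(\mu^{-1-2\alpha})$. Since each $\phi^{(i)}$ is admissible for the Rayleigh minimisation on $\Omega_i$, one has $[\phi^{(i)}]_\alpha \geq \underline{\xi}_{i,\alpha}\|\phi^{(i)}\|_{L^2}^2$. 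Summing these inequalities and dividing by $\|\phi^{(1)}\|_{L^2}^2 + \|\phi^{(2)}\|_{L^2}^2 = 1$ gives
\[ \xi_\alpha(\Omega_{1,2,\mu}) \geq \min(\underline{\xi}_{1,\alpha}, \underline{\xi}_{2,\alpha}) - C\mu^{-1-2\alpha}, \]
which together with Proposition \ref{theorem_1} delivers the claimed limit.

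At the other endpoint, Theorem \ref{theorem_3} combined with the scaling $\xi_\alpha(]0,L[) = L^{-2\alpha}\xi_\alpha(]0,1[)$ gives $\xi_\alpha(\Omega_{1,2,0}) = (|\Omega_1|+|\Omega_2|)^{-2\alpha}\,\xi_\alpha(]0,1[)$, which is strictly less than $1$ provided $|\Omega_1|+|\Omega_2|$ is sufficiently large. This is an implicit hypothesis of the statement, since otherwise $f(0) \geq 0$ together with strict monotonicity would preclude any $\mu_0 \geq 0$ with $f(\mu_0) = 0$. Under this condition, $f(0) < 0 < \lim_{\mu \to +\infty} f(\mu)$, and continuity of $f$ together with the intermediate value theorem furnishes the required $\mu_0$.

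The main obstacle is the limit at $+\infty$. The decomposition above is elementary once noted, but care is needed to justify both that the coupling integral truly vanishes with $\mu$—which is precisely what the polynomial kernel decay $|x-y|^{-1-2\alpha} \leq (2\mu)^{-1-2\alpha}$ provides—and that the extensions of $\phi^{(i)}$ by zero outside $\Omega_i$ remain admissible competitors for the Rayleigh quotient defining $\underline{\xi}_{i,\alpha}$, which is automatic from their disjoint supports lying in $\Omega_i$.
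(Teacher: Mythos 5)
Your proof is correct and follows essentially the same route as the paper's appendix: decompose the normalised eigenfunction $\phi_{1,2,\mu,\alpha}$ into its restrictions $\phi^{(1)},\phi^{(2)}$ to $\Omega_1,\Omega_2$, split the Gagliardo seminorm into $[\phi^{(1)}]_\alpha+[\phi^{(2)}]_\alpha$ minus a cross term that the separation $|x-y|\geq 2\mu$ forces to be $O(\mu^{-1-2\alpha})$, minorize each $[\phi^{(i)}]_\alpha$ by $\underline{\xi}_{i,\alpha}\|\phi^{(i)}\|_2^2$, and conclude by the intermediate value theorem. Your identity for the seminorm of a sum of disjointly supported functions packages the paper's lengthier term-by-term expansion of $I_1,I_2$ into one line, but the resulting bound $\lambda_\alpha(\Omega_{1,2,\mu})\geq\min(\underline{\lambda}_{1,\alpha},\underline{\lambda}_{2,\alpha})-C\mu^{-1-2\alpha}$ is the same.

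Your remark about the other endpoint of the intermediate value argument is a genuine point that the paper glosses over. The hypotheses $\lambda_\alpha(\Omega_i)>0$ do not by themselves imply $\lambda_\alpha(\Omega_{1,2,0})\leq 0$: by the scaling $\xi_\alpha(]0,L[)=L^{-2\alpha}\xi_\alpha(]0,1[)$, each hypothesis forces $|\Omega_i|<\xi_\alpha(]0,1[)^{1/2\alpha}$, whereas $\lambda_\alpha(\Omega_{1,2,0})\leq0$ requires $|\Omega_1|+|\Omega_2|\geq\xi_\alpha(]0,1[)^{1/2\alpha}$, and neither inequality entails the other. In the context where the lemma is actually invoked (immediately after point 1, which already guarantees $\lambda_\alpha(\Omega_{1,2,\mu})<0$ at the $\mu$ under consideration, hence $\lambda_\alpha(\Omega_{1,2,0})<0$ by the monotonicity from Theorem \ref{theorem_3}) the gap is harmless, but the lemma as literally stated does need that extra hypothesis, and you are right to flag it.
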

\noindent The proof of this lemma involves  lengthy but standard computations. Therefore, we postpone it at the end of the paper in the Appendix.

\subsection{Strategy of the proof of Theorem \ref{lemma1}}

The lower part of \eqref{lemmaeq1} will be obtained  from the fractional Poisson kernel in a ball (see for instance in \cite{Bogdan2}) : 
\begin{equation}\label{PoissonKernel}
P_r(x,y)=c_\alpha^d \left( \frac{r^2-|x|^2}{|y|^2-r^2} \right)^\alpha \frac{1}{|x-y|^{d}} 1_{\left\lbrace |y|>r \right\rbrace }(x).
\end{equation}
With this kernel, for any smooth function $v$ and $z \in \mathbb{R}^d$ we have that the solution of the equation
\begin{equation*}
\left\lbrace
\begin{aligned}
(-\Delta)^\alpha \underline{n}(x) & = 0 && \quad x \in B( z, r), \\
\underline{n}(x) & = v(x) && \quad x \in B(z,r)^c, 
\end{aligned} 
\right. 
\qquad \text{ is } \qquad \underline{n}(x) = \int_{\mathbb{R}^d} P_r(x-z, y-z) v(y) dy. 
\end{equation*}

The difficult part of the proof will be to obtain the upper bound in $\Omega_+$ of \eqref{lemmaeq1}. First, we establish that:
\begin{lemma}\label{lemma2}
Let $n_+$ be a positive bounded solution of \eqref{stationary_state}. If $R_0>C_{d,\alpha, \varepsilon_0, \varepsilon_1, r_0}$ then there exists a positive constant $C_2$ such that for all $x \in \Omega_{+}$
\begin{equation}\label{lemmaeq2}
n_+(x) \leq C_2 \int_{\Omega_{-}} \frac{1}{|x-y|^{d+2\alpha}}dy.
\end{equation}
\end{lemma}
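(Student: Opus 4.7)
The strategy is to localize the equation to each enlarged cluster $\tilde{\mathcal{C}}_{+,k} := \{x \in \mathbb{R}^d : \mathrm{dist}(x, \mathcal{C}_{+,k}) < \varepsilon_1\}$, exploit the positive principal eigenvalue provided by \eqref{H5Chap4} to apply a maximum principle there, and close a self-consistent inequality using the cluster geometry of \eqref{H4Chap4}.

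Set $\tilde{n}_k(x) := n_+(x) 1_{\tilde{\mathcal{C}}_{+,k}}(x)$. For $R_0$ sufficiently large compared with $r_0$ and $\varepsilon_1$, \eqref{H4Chap4} forces $\tilde{\mathcal{C}}_{+,k} \setminus \mathcal{C}_{+,k}$ to lie in $\Omega^c$, so $\tilde{n}_k$ coincides with $n_+$ on $\mathcal{C}_{+,k}$ and vanishes elsewhere. A direct computation using \eqref{stationary_state} gives, for $x \in \mathcal{C}_{+,k}$,
\[ ((-\Delta)^\alpha - Id)\tilde{n}_k(x) = -n_+^2(x) + C_\alpha \int_{\Omega \setminus \tilde{\mathcal{C}}_{+,k}} \frac{n_+(y)}{|x-y|^{d+2\alpha}} dy \leq f_k(x), \]
with $f_k(x) := C_\alpha \int_{\Omega \setminus \tilde{\mathcal{C}}_{+,k}} n_+(y) |x-y|^{-d-2\alpha}dy$. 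On the annulus $\tilde{\mathcal{C}}_{+,k}\setminus \mathcal{C}_{+,k}$, where $\tilde{n}_k \equiv 0$, the same quantity is non-positive. Hence $\tilde{n}_k$ is a sub-solution of $((-\Delta)^\alpha - Id)u = f_k$ on $\tilde{\mathcal{C}}_{+,k}$ with zero Dirichlet data.

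Splitting $f_k$ into the contribution from $\Omega_-$ (bounded by $C_\alpha I(x)$, with $I(x) := \int_{\Omega_-} |x-y|^{-d-2\alpha}dy$, using $n_+ \leq 1$) and from the other clusters (bounded by $C r_0^d \sum_{j \neq k} M_j |x - z_j|^{-d-2\alpha}$, with $M_j := \sup_{\mathcal{C}_{+,j}} n_+$ and using $\mathcal{C}_{+,j} \subset B(z_j, r_0)$), and comparing $\tilde{n}_k$ with a suitably large multiple of the principal eigenfunction $\phi_k$ of $\tilde{\mathcal{C}}_{+,k}$ (which enjoys a uniform positive lower bound on $\mathcal{C}_{+,k}$ thanks to the fractional Hopf lemma, \eqref{H2Chap4}, and the $\varepsilon_1$-buffer from $\partial \tilde{\mathcal{C}}_{+,k}$), the maximum principle supplied by \eqref{H5Chap4} yields the pointwise estimate
\[ n_+(x) \leq \frac{C}{\lambda_0}\left( I(x) + r_0^d \sum_{j \neq k} \frac{M_j}{|z_k - z_j|^{d+2\alpha}} \right), \qquad x \in \mathcal{C}_{+,k}, \]
uniformly in $k$.

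The geometric inequality $\max(|z_k - z_j|, |z_j - y|) \geq \tfrac{1}{2}|z_k - y|$ applied for each $y \in \Omega_-$, combined with the $R_0$-separation of the centres in \eqref{H4Chap4}, gives $\sum_{j \neq k} |z_k - z_j|^{-d-2\alpha} |z_j - y|^{-d-2\alpha} \leq C R_0^{-2\alpha} |z_k - y|^{-d-2\alpha}$. Integrating over $\Omega_-$ produces $\sum_{j \neq k} I_j |z_k - z_j|^{-d-2\alpha} \leq C R_0^{-2\alpha} I_k$, where $I_k := I(z_k)$. Since $I(x) \asymp I_k$ on $\mathcal{C}_{+,k}$ (because $r_0 \ll R_0$), the previous display reads $M_k/I_k \leq C_0/\lambda_0 + (C_1/\lambda_0) R_0^{-2\alpha} \sup_j (M_j/I_j)$, and hence $Q := \sup_k M_k/I_k \leq 2 C_0/\lambda_0$ for $R_0$ large, which is exactly the claim.

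The main obstacle is that this closing argument requires $Q$ to be finite a priori, while $M_k \leq 1$ is only a uniform bound and $I_k$ may tend to zero as $\mathcal{C}_{+,k}$ recedes to infinity. I would circumvent this by first running the analysis on the truncated configuration retaining only finitely many clusters, for which $Q^{(N)} < \infty$ is automatic, deriving $Q^{(N)} \leq 2C_0/\lambda_0$ uniformly in $N$, and then passing to the limit via monotone convergence of the Picard iterates for the non-negative linear operator $(M_k) \mapsto (C r_0^d \sum_{j \neq k} M_j |z_k - z_j|^{-d-2\alpha})_k$, whose spectral radius in the weighted norm $\sup_k \cdot /I_k$ is strictly less than $1$ for $R_0$ large.
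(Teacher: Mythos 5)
Your strategy mirrors the paper's proof quite closely: localize $n_+$ to a single cluster, observe that the truncated function is a subsolution of $((-\Delta)^\alpha-Id)u=f_k$ with zero exterior data, bound it by a multiple of the principal eigenfunction of the $\varepsilon_1$-fattened cluster (whose eigenvalue is bounded below by $\lambda_0$ thanks to \eqref{H5Chap4}), and then beat down the cross-cluster contributions using the $R_0$-separation. The paper uses a smooth cutoff $\chi_k$ supported in $B(z_k,2r_0)$ rather than your sharp indicator $1_{\tilde{\mathcal{C}}_{+,k}}$, but since $n_+$ already vanishes on $\tilde{\mathcal{C}}_{+,k}\setminus\mathcal{C}_{+,k}\subset\Omega^c$ the two localized functions coincide, so that is not a real difference. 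Your geometric inequality $\max(|z_k-z_j|,|z_j-y|)\geq\tfrac12|z_k-y|$ plays the role of the paper's convexity estimate in the induction step of its Step 2; both deliver the crucial factor $R_0^{-2\alpha}$.

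Where you diverge is the way the self-consistent estimate is closed, and this is where your write-up is vaguer than it needs to be. You correctly identify the pitfall — $Q=\sup_k M_k/I_k$ is not known to be finite a priori — but the suggested remedy of ``running the analysis on the truncated configuration retaining only finitely many clusters'' is potentially misleading: modifying the \emph{domain} changes the solution $n_+$, and the relation between a solution on a truncated domain and the original $n_+$ is not available before uniqueness has been proven. What actually closes the loop (and is what the paper does in Step 2, and what your final sentence about Picard iterates is implicitly groping for) is to iterate the self-consistent inequality on the fixed solution $n_+$ itself, using the global a priori bound $n_+\leq 1$ to control the remainder after $N$ substitutions: the $(N+1)$-fold iterated kernel contributes a factor $\bigl(C_0 R_0^{-2\alpha}\bigr)^{N+1}$, which is summable when $R_0$ is large, so the geometric series converges and the remainder is sent to zero. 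That is precisely the content of the paper's sequence $I_k$ and the inductive bound $I_k\leq C_1^k I_0$ with $C_1<1$. If you rewrite your last paragraph along these lines — iterate the inequality in the fixed solution and terminate with $M_j\leq 1$, rather than truncating the domain — your argument is correct and is essentially the paper's.
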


\noindent To prove Lemma \ref{lemma2}, we localise the function $n_+$ in a cluster $\mathcal{C}_k$ and we use hypothesis \eqref{H4Chap4} which essentially says  that the cluster is at large distance from the others.

\noindent Next, the idea is to compare the solution $n_+$ with a translated and rescaled barrier function $\psi$. This particular barrier function satisfies 
\begin{equation}\label{psi}
\left\lbrace
\begin{aligned}
&(-\Delta)^\alpha \psi (x) > 1  && \text{ for } x \in T(0, 1,4), \\
& \psi (x) = 0 && \text{ for } x \in B(0,1), \\
& \psi(x) \leq c(|x| - 1)^\alpha && \text{ for } x \in T(0,1,4), \\
& 1 \leq \psi \leq C && \text{ for } x \in B(0,4)^c ,
\end{aligned}
\right.
\end{equation}
where $T(z,r,R)$ designates the torus of center $z \in \mathbb{R}^d$ and of inner and outer radius $0<r<R$
\[\text{i.e. } \quad T(z,r,R) = \left\lbrace y \in \mathbb{R}^d, \ r<|y-z|<R \right\rbrace.\]
The construction of a such barrier function can be found in Appendix B of \cite{Ros-Oton-Serra}. Then, we look for "a suitable" constant $C_k$ such that for all $x \in \mathcal{C}_{+,k}$ we have 
\[n_+(x) \leq C_k \psi\left( \frac{x-z_x}{\varepsilon_0} \right)\]
where $z_x$ and $\varepsilon_0$ are introduced in \eqref{H2Chap4}. We claim that there exists a positive constant $C_{d,\alpha, \varepsilon_0, \varepsilon_1, r_0}'$ which depends only on the listed parameters such that 
\begin{equation}\label{ObjCk}
C_k =  \underset{ x \in \mathcal{C}_{+,k}}{\sup} \left( C_{d,\alpha, \varepsilon_0, \varepsilon_1, r_0}' \int_{\Omega_-} \frac{1}{|x-y|^{d+2\alpha}}dy  \right). 
\end{equation}
Thanks to \eqref{ObjCk}, the third property of \eqref{psi} and some Harnack type inequalities the conclusion follows. 

\begin{remark}
In particular cases, some explicit and more tractable versions of $\mathcal{G}$ can be found. For instance, if $\Omega_-$ is bounded, one can prove that $\mathcal{G}(x)=\frac{\min(\delta(x)^\alpha, 1 )}{1+|x|^{d+2\alpha}}$. We provide some examples at the end of this section.
\end{remark}

\subsection{Harnack type properties}\label{appendixB}

First, we introduce the following function, defined for $x\in\Omega_+$
\begin{equation}\label{gchap4}
g(x)= \int_{\Omega_-} \frac{1}{|x-y|^{d+2\alpha}}dy,
\end{equation}
such that 
\[G(x) = 1_{\Omega_-}(x) + g(x) 1_{\Omega_+}(x).\]

\noindent We are going to prove some properties for $g$ and $G$ which are elementary in nature, but will be important for the sequel.

\begin{proposition}\label{propg1}
There exists a positive constant $C>0$ such that for all $k \in \mathbb{N}$, 
\begin{equation}\label{eqpropg1}
\underset{x \in \mathcal{C}_{+,k}}{\sup} g(x) \leq C \underset{x \in \mathcal{C}_{+,k}}{\inf} g(x).
\end{equation}
\end{proposition}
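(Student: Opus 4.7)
The plan is to exploit the geometric assumption \eqref{H4Chap4}, which says that each cluster $\mathcal{C}_{+,k}$ is contained in a ball $B(z_k,r_0)$ of fixed radius, while $\mathrm{dist}(\Omega_+,\Omega_-)>R_0$. Since $r_0$ is fixed and $R_0$ is taken large, points inside a single cluster look "essentially identical" from the point of view of $\Omega_-$: the kernel $|x-y|^{-(d+2\alpha)}$ can only vary by a bounded multiplicative factor when $x$ ranges over $\mathcal{C}_{+,k}$ and $y$ ranges over $\Omega_-$.

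Concretely, I would pick two arbitrary points $x_1,x_2\in\mathcal{C}_{+,k}$. Both lie in $B(z_k,r_0)$, so $|x_1-x_2|\leq 2r_0$. For every $y\in\Omega_-$, assumption \eqref{H4Chap4} gives $|x_1-y|\geq R_0$. The triangle inequality then yields
\begin{equation*}
|x_2-y|\ \leq\ |x_1-y|+|x_1-x_2|\ \leq\ |x_1-y|\Bigl(1+\tfrac{2r_0}{R_0}\Bigr).
\end{equation*}
Raising this to the power $-(d+2\alpha)$ and setting $\kappa:=\bigl(1+\tfrac{2r_0}{R_0}\bigr)^{d+2\alpha}$, I obtain the pointwise comparison
\begin{equation*}
\frac{1}{|x_2-y|^{d+2\alpha}}\ \geq\ \frac{1}{\kappa}\cdot\frac{1}{|x_1-y|^{d+2\alpha}}\qquad\text{for all } y\in\Omega_-.
\end{equation*}
Integrating this inequality in $y$ over $\Omega_-$ gives $g(x_2)\geq \kappa^{-1}g(x_1)$, and taking the supremum in $x_1$ and the infimum in $x_2$ yields \eqref{eqpropg1} with $C=\kappa$.

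There is no real obstacle here: the only issue is to ensure $R_0$ is large enough for $\kappa$ to be a universal constant, but this is already built into the hypothesis that $R_0$ may be chosen large. The constant $C=(1+2r_0/R_0)^{d+2\alpha}$ depends only on $d$, $\alpha$, $r_0$ and $R_0$, and crucially is independent of the index $k$, which is exactly what Proposition \ref{propg1} asserts.
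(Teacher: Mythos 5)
Your proof is correct and follows essentially the same argument as the paper: both use the triangle inequality together with the cluster diameter bound $2r_0$ from \eqref{H4Chap4} and the separation $\mathrm{dist}(\Omega_+,\Omega_-)>R_0$ to show that the kernel $|x-y|^{-(d+2\alpha)}$ varies by at most a $k$-independent multiplicative factor as $x$ ranges over $\mathcal{C}_{+,k}$, then integrate in $y$ over $\Omega_-$. Your version compares two arbitrary points $x_1,x_2$ directly, while the paper compares a generic $x$ to the maximizer $x_k$; the two formulations are equivalent and yield the same explicit constant $C=(1+2r_0/R_0)^{d+2\alpha}$.
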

\begin{proof}
Let $k \in \mathbb{N}$ and $x_k \in \overline{\mathcal{C}_{+,k}}$ be such that
\[ \underset{x \in \mathcal{C}_{+,k}}{\sup} \int_{\Omega_-} \frac{1}{|x-y|^{d+2\alpha}}dy = \int_{\Omega_-} \frac{1}{|x_k-y|^{d+2\alpha}}dy.\]
According to \eqref{H4Chap4}, we have for all $(x,y)  \in \mathcal{C}_{+,k} \times \Omega_-$, 
\[ |x-y| \leq |x-x_k| + |x_k- y| \leq 2r_0 + |x_k-y|.\]
It follows that 
\[\frac{1}{(2r_0 + |x_k-y|)^{d+2\alpha}} \leq \frac{1}{|x-y|^{d+2\alpha}}.\]
Since $y \in \Omega_-$, we deduce that $R_0 \leq |x_k-y|$ and we conclude to the existence of a constant $C$ (independent of $x_k$ and $y$) such that 
\[\frac{1}{|x_k-y|} \leq \frac{C}{2r_0 + |x_k-y|}.\]
The conclusion follows. 
\end{proof}

\begin{corollary}\label{propg3}
There exists $\varepsilon_2>0$ such that for all $\varepsilon < \varepsilon_2$ we have 
\begin{equation}
\underset{|x-y|<\varepsilon_2}{\underset{x,y \in \Omega}{\inf}} \frac{G(x)}{G(y)}>0.
\end{equation}
\end{corollary}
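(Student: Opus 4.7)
The plan is to choose $\varepsilon_2$ small enough that the separation assumption \eqref{H4Chap4} forces any two points $x,y\in\Omega$ with $|x-y|<\varepsilon_2$ to lie in the same ``component'' of the decomposition, and then to reduce the claim to Proposition \ref{propg1} on each such component.

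First I would fix $\varepsilon_2$ with $0<\varepsilon_2<R_0-r_0$ (and, say, $\varepsilon_2<R_0$ to keep the distance from $\Omega_+$ to $\Omega_-$ clear). By \eqref{H4Chap4} we have $\mathrm{dist}(\Omega_+,\Omega_-)>R_0$, and for two distinct clusters $\mathcal{C}_{+,i}\subset B(z_i,r_0)$ and $\mathcal{C}_{+,j}\subset B(z_j,r_0)$ with $|z_i-z_j|>R_0+r_0$, the distance between them is at least $R_0-r_0$. Thus any pair $x,y\in\Omega$ with $|x-y|<\varepsilon_2$ falls into exactly one of two cases: either $x,y\in\Omega_-$, or $x,y\in\mathcal{C}_{+,k}$ for a common index $k$.

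In the first case $G(x)=G(y)=1$, so $G(x)/G(y)=1$. In the second case $G(x)=g(x)$ and $G(y)=g(y)$, and Proposition \ref{propg1} yields
\[
\frac{G(x)}{G(y)}=\frac{g(x)}{g(y)}\geq \frac{\inf_{\mathcal{C}_{+,k}} g}{\sup_{\mathcal{C}_{+,k}} g}\geq \frac{1}{C},
\]
where $C>0$ is the constant from Proposition \ref{propg1}, uniform in $k$. Taking the minimum of these two lower bounds gives a uniform positive lower bound for $G(x)/G(y)$ on $\{(x,y)\in\Omega\times\Omega:|x-y|<\varepsilon_2\}$, which is the claim (up to what appears to be a small typo between $\varepsilon$ and $\varepsilon_2$ in the statement).

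I do not expect any serious obstacle here: the two geometric observations (that nearby points stay in $\Omega_-$ or in a single cluster) are immediate consequences of \eqref{H4Chap4}, and the nontrivial estimate was already done in Proposition \ref{propg1}. The only point that deserves a careful check is that $\varepsilon_2$ is chosen small enough compared to $R_0-r_0$ and to $\mathrm{dist}(\Omega_+,\Omega_-)$, which is why the smallness of $\varepsilon_2$ is the only parameter that needs to be tuned.
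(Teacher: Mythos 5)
Your proof is correct and takes the approach the paper intends: the corollary is stated immediately after Proposition~\ref{propg1} precisely because the separation hypotheses \eqref{H4Chap4} make nearby points of $\Omega$ lie either both in $\Omega_-$ (where $G\equiv 1$) or both in a single cluster $\mathcal{C}_{+,k}$ (where Proposition~\ref{propg1} controls the oscillation of $g$), once $\varepsilon_2<R_0-r_0$. You also correctly flag the $\varepsilon$/$\varepsilon_2$ typo in the statement; your argument proves the intended claim.
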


With similar arguments, we prove 
\begin{lemma}\label{Hanack}
Let $\widetilde{g}: x \in \Omega_+ \mapsto  \int_{\Omega \cap B(z_x,R_0)^c} \frac{1}{|x-y|^{d+2\alpha}}dy$. There exists $C_{r_0,d,\alpha}$ depending on $r_0,  d, \alpha$ such that 
\[ \forall x \in \mathcal{C}_{+,k}, \quad \underset{y \in \mathcal{C}_{+,k}}{\max} \widetilde{g}(y) \leq C_{r_0,d,\alpha} \widetilde{g}(x).\]
\end{lemma}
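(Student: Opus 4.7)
The plan is to imitate the template used in the proof of Proposition \ref{propg1}: two points $x,y$ lying in the same cluster $\mathcal{C}_{+,k}$ are at distance at most $2r_0$ apart, whereas the variable $w$ in the definition of $\widetilde{g}$ ranges over $\Omega\cap B(z_k,R_0)^c$, so $w$ stays at distance at least $R_0-r_0$ from both $x$ and $y$. When $R_0$ is sufficiently large relative to $r_0$ (which is part of the standing assumption on $R_0$), the denominators $|x-w|^{d+2\alpha}$ and $|y-w|^{d+2\alpha}$ are therefore comparable uniformly in $w$, which is exactly what a pointwise Harnack-type comparison needs.

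Concretely, I first fix $x,y\in\mathcal{C}_{+,k}$. With the natural reading $z_x=z_{k_x}=z_k$, the two integration domains appearing in $\widetilde{g}(x)$ and $\widetilde{g}(y)$ coincide, both being $\Omega\cap B(z_k,R_0)^c$. For any $w$ in this set, the triangle inequality together with $x,y\in B(z_k,r_0)$ yields
\[
|x-w|\le r_0+|z_k-w|,\qquad |y-w|\ge |z_k-w|-r_0,
\]
so that
\[
\frac{|x-w|}{|y-w|}\le \frac{|z_k-w|+r_0}{|z_k-w|-r_0}.
\]
Since $|z_k-w|\ge R_0$ on the integration domain and $R_0$ may be assumed at least $2r_0$, the right-hand side is bounded above by a constant $K$ that depends only on the ratio $r_0/R_0$ (and so ultimately only on $r_0,d,\alpha$ once the threshold on $R_0$ is fixed).

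Raising to the power $d+2\alpha$ gives the pointwise inequality $|y-w|^{-(d+2\alpha)}\le K^{d+2\alpha}|x-w|^{-(d+2\alpha)}$; integrating over $w\in\Omega\cap B(z_k,R_0)^c$ then produces $\widetilde{g}(y)\le K^{d+2\alpha}\widetilde{g}(x)$. Taking the supremum over $y\in\mathcal{C}_{+,k}$ yields the statement with $C_{r_0,d,\alpha}=K^{d+2\alpha}$. There is no real obstacle here beyond bookkeeping: the only subtle point is to observe that the constant really is independent of the cluster index $k$ and of $R_0$ (once $R_0\ge 2r_0$), which is what allows a single constant $C_{r_0,d,\alpha}$ to work simultaneously for every cluster and justifies the Harnack-type character of the estimate.
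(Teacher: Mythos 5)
Your proof is correct and follows essentially the same route the paper intends: the paper only says the lemma is proved ``with similar arguments'' to Proposition \ref{propg1}, and your argument is precisely the adaptation of that triangle-inequality comparison (using $z_k$ as reference point rather than the maximizer $x_k$, which is immaterial). The one point worth making explicit, which you do, is that $z_x=z_k$ for all $x$ in a given cluster, so the integration domains in $\widetilde{g}(x)$ and $\widetilde{g}(y)$ coincide; after that the pointwise bound $|x-w|/|y-w|\le(|z_k-w|+r_0)/(|z_k-w|-r_0)\le 3$ for $|z_k-w|\ge R_0\ge 2r_0$ yields the claim with $C_{r_0,d,\alpha}=3^{d+2\alpha}$, independent of $k$ and of $R_0$ once the standing largeness assumption on $R_0$ is in force.
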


\begin{proposition}\label{propg2}
The map $G$ is uniformly continuous in $\Omega_+$, that is
\[ \forall \varepsilon>0, \exists \mu_\varepsilon \text{ such that } \forall (x,y) \in \Omega_+, \ |x-y|<\mu_\varepsilon, \text{ then } |G(x) - G(y) | < \varepsilon. \]
\end{proposition}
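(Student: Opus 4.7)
The statement only concerns points of $\Omega_+$, where $G(x) = g(x)$, so uniform continuity of $G$ on $\Omega_+$ reduces to uniform continuity of the function $g$ defined in \eqref{gchap4}. I plan to prove the stronger assertion that $g$ is in fact \emph{Lipschitz} on $\Omega_+$. The decisive ingredient is hypothesis \eqref{H4Chap4}, which provides the separation $|x-y| \geq R_0$ for every $(x,y) \in \Omega_+ \times \Omega_-$; consequently the kernel $|x-y|^{-(d+2\alpha)}$ never approaches its singularity as $x$ ranges over $\Omega_+$.

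I would introduce the open set $\mathcal{U} := \{x \in \mathbb{R}^d : \mathrm{dist}(x, \Omega_-) > R_0/2\}$, which contains $\Omega_+$ by \eqref{H4Chap4}. For any $x \in \mathcal{U}$ and $y \in \Omega_-$ we have $|x-y| > R_0/2$, so the $x$-gradient of the integrand is dominated by the integrable function $(d+2\alpha)|x-y|^{-(d+2\alpha+1)}$ with a bound independent of $x$. Differentiation under the integral therefore shows $g \in C^1(\mathcal{U})$ together with
\[ |\nabla g(x)| \;\leq\; (d+2\alpha) \int_{B(x, R_0/2)^c} \frac{dy}{|x-y|^{d+2\alpha+1}} \;=:\; L, \]
where $L$ depends only on $d$, $\alpha$ and $R_0$.

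The last step addresses the fact that $\Omega_+$ is typically neither convex nor path connected. Fix $x_1, x_2 \in \Omega_+$ with $|x_1-x_2| < R_0/4$. Since $\mathrm{dist}(\cdot, \Omega_-)$ is $1$-Lipschitz and $\mathrm{dist}(x_i, \Omega_-) \geq R_0$, the whole segment $[x_1,x_2]$ stays in $\mathcal{U}$; integrating $\nabla g$ along it yields $|g(x_1) - g(x_2)| \leq L\,|x_1-x_2|$. Taking $\mu_\varepsilon = \min(\varepsilon/L,\, R_0/4)$ then delivers the claim. I do not expect any serious obstacle: the proposition is essentially a direct consequence of the separation hypothesis \eqref{H4Chap4} combined with differentiation under the integral sign, the only mildly delicate point being to argue the Lipschitz estimate along a straight segment in the non-convex set $\Omega_+$, handled by the $R_0$-cushion provided by $\mathcal{U}$.
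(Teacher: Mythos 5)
Your proof is correct and establishes a strictly stronger conclusion (Lipschitz continuity on $\Omega_+$) than the uniform continuity asserted, but it takes a genuinely different route from the paper. The paper's argument proceeds by a cut-off in the $y$-variable: it first fixes $R$ large enough that the tail $\int_{|y-x|>R}|x-y|^{-(d+2\alpha)}\,dy$ is below $\varepsilon/4$, then uses the uniform continuity of $t\mapsto t^{-(d+2\alpha)}$ on $(R_0,\infty)$ together with the measure of a fixed ball to control the contribution from $\Omega_-\cap B(x_1,R+2\mu_\varepsilon)$. Your argument differentiates under the integral, using the same separation $\mathrm{dist}(\Omega_+,\Omega_-)>R_0$ from \eqref{H4Chap4} to dominate $\nabla_x |x-y|^{-(d+2\alpha)}$ by the $x$-independent integrable function $(d+2\alpha)\,1_{\{|z|>R_0/2\}}|z|^{-(d+2\alpha+1)}$, and then handles the non-convexity of $\Omega_+$ by noting that the $R_0$-cushion keeps short segments inside $\mathcal{U}$. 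What this buys you is a clean quantitative modulus, $\mu_\varepsilon = \min(\varepsilon/L, R_0/4)$, whereas the paper's cut-off argument yields only qualitative uniform continuity; on the other hand, the paper's version avoids any regularity discussion (no differentiation under the integral to justify) and would transfer more readily to kernels that are merely continuous but not differentiable. Both approaches hinge entirely on \eqref{H4Chap4}.
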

\begin{proof} 
Let $\varepsilon > 0$ and $R> 0$ be such that 
\begin{equation}\label{proof21}
\int_{|y|>R} \frac{1}{|y|^{d+2\alpha}}dy = \frac{1}{R^{2\alpha}} \leq \frac{\varepsilon}{4}.
\end{equation}
Next, remarking that for all $(x,y) \in \Omega_+ \times \Omega_-$ we have $R_0 \leq |x-y|$, by the uniform continuity of the application $(t \in ]R_0, +\infty[ \mapsto t^{-(d+2\alpha)})$ we deduce the existence of a distance $\mu_\varepsilon>0$ such that for all $x_1,x_2 \in \Omega_+$ with $|x_1-x_2| \leq \mu_\varepsilon$ we have 
\begin{equation}\label{proof22}
 \forall y \in  \Omega_-, \ |\frac{1}{|x_1-y|^{d+2\alpha}} -  \frac{1}{|x_2 -y |^{d+2\alpha}} | \leq \frac{\varepsilon}{2 m(B(0, R + 2\mu_\varepsilon))}.
\end{equation}
Next, we conclude thanks to \eqref{proof21} and \eqref{proof22} that 
\begin{align*}
&|\int_{\Omega_-} \frac{1}{|x_1-y|^{d+2\alpha}}dy - \int_{\Omega_-} \frac{1}{|x_2-y|^{d+2\alpha}}dy | \\
&\leq |\int_{\Omega_- \cap B(x_1, R+2\mu_\varepsilon)} \frac{1}{|x_1-y|^{d+2\alpha}} - \frac{1}{|x_2-y|^{d+2\alpha}}dy | \\
& \quad  + |\int_{\Omega_- \cap B(x_1, R+2\mu_\varepsilon)^c} \frac{1}{|x_1-y|^{d+2\alpha}} dy|+| \int_{\Omega_- \cap B(x_1, R+\mu_\varepsilon)^c} \frac{1}{|x_2-y|^{d+2\alpha}}dy | \\
&\leq  \varepsilon. 
\end{align*}
\end{proof}

\noindent Finally, we recall the following strong maximum principle for the fractional Laplacian:

\begin{lemma}\label{maxprinc2}
 For any smooth bounded domain $\mathcal{U} \subset \mathbb{R}^d$, and any smooth non-trivial function $v \in H^\alpha(\mathcal{U})$ such that 
 \begin{enumerate}
     \item $v \geq 0$, 
     \item $((-\Delta)^\alpha - Id) v > 0$,
 \end{enumerate}
it follows $v > 0$ in the interior of the domain $\mathcal{U}$. 
\end{lemma}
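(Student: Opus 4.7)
The plan is to argue by contradiction, exploiting the pointwise integral representation of the fractional Laplacian. Suppose there exists an interior point $x_0 \in \mathcal{U}$ at which $v(x_0) = 0$. Since $v \geq 0$ on all of $\mathbb{R}^d$, the point $x_0$ is a global minimum of $v$, and therefore
\[
(-\Delta)^\alpha v(x_0) \;=\; C_\alpha \, PV \!\!\int_{\mathbb{R}^d} \frac{v(x_0) - v(y)}{|x_0 - y|^{d+2\alpha}}\, dy \;=\; -\,C_\alpha \int_{\mathbb{R}^d} \frac{v(y)}{|x_0 - y|^{d+2\alpha}}\, dy \;\leq\; 0.
\]
Since $v$ is smooth and non-trivial, there exists a ball $B \subset \mathbb{R}^d$ on which $v > 0$, so the integral on the right is strictly positive, which upgrades the previous inequality to $(-\Delta)^\alpha v(x_0) < 0$.

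The hypothesis $((-\Delta)^\alpha - \mathrm{Id})v > 0$ evaluated at $x_0$ gives $(-\Delta)^\alpha v(x_0) > v(x_0) = 0$, contradicting the strict negativity just obtained. Hence $v$ cannot vanish at any interior point of $\mathcal{U}$, which is exactly the claim. The only minor point requiring care is that the principal value in the definition of $(-\Delta)^\alpha v(x_0)$ coincides with the ordinary integral in this situation; this follows because the smoothness of $v$ makes $v(y) - v(x_0) = O(|y-x_0|^2)$ near $x_0$ (absorbing the singularity of the kernel), while the global sign $v \geq 0$ makes the tail automatically integrable and removes any need for cancellation. I do not expect genuine obstacles here: the argument is the standard nonlocal analogue of the fact that a non-negative function attaining its minimum in the interior must have $-\Delta v \leq 0$ there, made strict by the non-local, global nature of $(-\Delta)^\alpha$.
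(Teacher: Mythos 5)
The paper does not prove this lemma; it is recalled as a standard strong maximum principle for the fractional Laplacian, so there is no internal proof to compare against. Your contradiction argument is the standard one and it is correct: an interior zero $x_0$ of $v$ is a global minimum (as $v\geq 0$ on all of $\mathbb{R}^d$ and $v(x_0)=0$), so $(-\Delta)^\alpha v(x_0)=-C_\alpha\int_{\mathbb{R}^d} v(y)\,|x_0-y|^{-d-2\alpha}\,dy\leq 0$, while hypothesis~2 evaluated at $x_0$ gives $(-\Delta)^\alpha v(x_0)>v(x_0)=0$. Two remarks. The strict negativity you derive from non-triviality ($v>0$ on some ball, hence $(-\Delta)^\alpha v(x_0)<0$) is superfluous for the lemma as stated, since hypothesis~2 is already strict; but it is precisely what rescues the argument under the weaker hypothesis $((-\Delta)^\alpha-\mathrm{Id})v\geq 0$, which is in fact how the paper invokes the lemma in Step~1 of the proof of Lemma~\ref{lemma2}, so keeping that step is the right call. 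Also, the $O(|y-x_0|^2)$ cancellation that removes the principal value rests on $\nabla v(x_0)=0$ at the interior minimum rather than on smoothness alone (smoothness by itself gives only $O(|y-x_0|)$, insufficient when $\alpha\geq\tfrac12$), and the finiteness of the tail requires the standard growth condition $v\in L^1\big(\mathbb{R}^d,(1+|y|)^{-d-2\alpha}\,dy\big)$; both are harmless in the paper's applications, where $v$ is bounded, but deserve explicit mention.
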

%
%

\subsection{The proof of Lemma \ref{lemma2}}

\textbf{Step 1: Localisation argument. } Let $\chi_k$ be a function such that $\chi_k \in C^\infty_c (\mathbb{R}^d, [0,1])$ and 
\begin{equation}\label{chiR}
\chi_k(x,z) = 
\left\lbrace
\begin{aligned}
&1 && \text{ if } x \in B(z_k,r_0),\\
&0 && \text{ if } x \in B(z_k, 2r_0)^c
\end{aligned}
\right.
\end{equation}
where $z_k$ and $r_0$ are provided by \eqref{H4Chap4}. We set $n_k = n_+ \times \chi_k$. This function belongs to $H^\alpha_0(\mathcal{C}_{+,k})$ 
\begin{align*}
((-\Delta)^\alpha - Id) n_k(x) &= \int_{\mathbb{R}^d} \frac{n_k(x) - n_k (y)}{|x-y|^{d+2\alpha}}dy - n_k(x) \\
& = \int_{\mathbb{R}^d} \frac{n_+(x) - n_+ (y)}{|x-y|^{d+2\alpha}}dy + \int_{\mathbb{R}^d} \frac{ n_+ (y)(1-\chi_k(y))}{|x-y|^{d+2\alpha}}dy  - n_+(x) \\
&=\int_{\mathbb{R}^d} \frac{ n_+ (y)(1-\chi_k(y))}{|x-y|^{d+2\alpha}}dy - n_+(x)^2 \\
&\leq \int_{B(z_k, R_0)^c  \cap \Omega} \frac{n_+(y)}{|x-y|^{d+2\alpha}}dy. 
\end{align*}
In the following, we denote by 
\begin{equation}\label{vk}
\widetilde{g}_k = \underset{x \in \mathcal{C}_{+,k} }{\max} \int_{B(z_k, R_0)^c  \cap \Omega} \frac{n_+(y)}{|x-y|^{d+2\alpha}}dy
\end{equation}
Let $\phi_k$ be the principal eigenfunction of the Dirichlet operator $(-\Delta)^\alpha - Id $ in the set  
$$\left\lbrace x \in \mathbb{R}^d | \mathrm{dist}(x, \mathcal{C}_{+,k}) < \varepsilon_1 \right\rbrace.
$$
 Next, we define 
\begin{equation}\label{psik}
\psi_k(x) = \frac{\widetilde{g}_k}{c \lambda_0 \min(\varepsilon_1^{2\alpha}, 1)} \phi_k(x)
\end{equation}
where $c$ is a positive constant such that $\underset{y \in \mathcal{C}_{+,k}}{\min} \phi_k(y)>c\min(\varepsilon_1^{2\alpha}, 1)$. Indeed, since $\mathcal{C}_{+,k}$ is uniformly bounded with respect to $k$, we deduce thanks to \cite{Ros-Oton-Serra} (Proposition 3.5) and thanks to the interior ball condition that $c$ exists and is positive. Moreover, we have obviously
\[\widetilde{g}_k \leq ((-\Delta)^\alpha - Id) \psi_k(x) . \]
Since $n_+ \leq 1$, we deduce the existence of $C>1$ such that 
\[\forall x \in \mathcal{C}_{+,k}, \quad n_+(x) \leq C \psi_k(x).\]
Let $C^* =\inf  \left\lbrace C > 1 \ | \  n_+ \leq C \psi_k \right\rbrace$. We claim that $C^* = 1$. Indeed, if we assume by contradiction that $C^* \neq 1$, it follows that there exists $x_0 \in \overline{\mathcal{C}_{+,k}}$ such that $C^* \psi_k (x_0)- n_k(x_0) = 0  $. Recalling that ${n_{k}}_{| \partial \mathcal{C}_{+,k}} = 0$ and ${\psi_{k}}_{| \partial \mathcal{C}_{+,k}} \neq 0$, it follows that $x_0 \in \mathcal{C}_{+,k}$. Remarking that 
\[((-\Delta)^\alpha - Id )( C^* \psi_k - n_k)(x) \geq C^* \widetilde{g}_k - \widetilde{g} (x)\geq  0 ,\]
the existence of $x_0 \in \mathcal{C}_{+,k}$ is in contradiction with Lemma \ref{maxprinc2}. Therefore, we deduce that $C^* = 1$. We conclude thanks to Lemma \ref{Hanack} that for all $x\in \mathcal{C}_{+,k}$ 
\begin{equation}\label{step1Lemma1}
n_+(x) \leq n_k(x) \leq \psi_k(x) = \underset{ := C_0}{\underbrace{\frac{ C_{r_0,d,\alpha}}{c \lambda_0 \min(1, \varepsilon_1^{2\alpha} ) } } }  \ \widetilde{g}(x).
\end{equation}

\textbf{Step 2: Concentration in $\Omega_-$. }
Since $n_+\leq 1$, $R_0>C_0^{\frac{1}{2\alpha}}$ it follows by an immediate iteration of \eqref{step1Lemma1} that for all $x \in \Omega_+$: 
\[ n_+(x) \leq C_0 \overset{+\infty}{\underset{k=0}{\sum}}   I_k(x) \]
with 
\[I_k(x) =C_0 \int_{\Omega_+ \cap B(x,R_0)^c} \frac{I_{k-1}(y)}{|x-y|^{d+2\alpha}}dy, \qquad I_0 =  \int_{\Omega_-} \frac{1}{|x-y|^{d+2\alpha}}dy.\]
Let us prove by iteration that
\begin{equation}\label{objLemma1}
    I_k(x) \leq C_1^k \int_{\Omega_-} \frac{1}{|x-y|^{d+2\alpha}}dy \quad \text{ with } C_1 < 1.
\end{equation}
The conclusion that we may only use the components $\Omega_-$ will follows. It is clear that \eqref{objLemma1} holds true for $k=0$. Next, we prove it for $k=1$:
\begin{align*}
I_1(x) &= C_0 \int_{\Omega_{+} \cap B(z_x ,R_0)^c} \frac{1}{|x-y_0|^{d+2\alpha}} \ \int_{\Omega_-} \frac{1}{|y_0-y_1|^{d+2\alpha}} dy_1 dy_0 \\
&=C_0 \int_{(x-y'_0) \in \Omega_{+}, y'_0 \in B(0 ,R_0)^c} \frac{1}{|y'_0|^{d+2\alpha}} \ \int_{\Omega_-} \frac{1}{|x-y'_0-y_1|^{d+2\alpha}} dy_1 dy'_0.
\end{align*}
Next, by the convexity of the function $(t \mapsto |t|^{d+2\alpha} ) $ it follows 
\[ |x-y_1| \leq |x-y_0'-y_1|+|y_0'| \Rightarrow \frac{1}{|x-y'_0-y_1|^{d+2\alpha}} \leq \frac{2^{d-1+2\alpha}}{|x-y_1|^{d+2\alpha}} \left( 1 + \frac{|y_0'|^{d+2\alpha}}{|x-y'_0-y_1|^{d+2\alpha}} \right).\]
Fubini's Theorem leads to
\begin{align*}
I_1 (x) &\leq C_0 \int_{\Omega_-} \frac{1}{|x-y_1|^{d+2\alpha}} \int_{(x-y'_0) \in \Omega_{+}, y'_0 \in B(0 ,R_0)^c} \frac{2^{d-1+2\alpha}}{|y'_0|^{d+2\alpha}} \left(1 + \frac{|y_0'|^{d+2\alpha}}{|x-y'_0-y_1|^{d+2\alpha}} \right) dy_0' dy_1 \\
&\leq C_0 \int_{\Omega_-} \frac{ 2^{d-1+2\alpha}}{|x-y_1|^{d+2\alpha}} \left( \int_{ y'_0 \in B(0 ,R_0)^c} \frac{1}{|y_0'|^{d+2\alpha}}dy_0' + \int_{(x-y_0') \in \Omega_+}  \frac{1}{|x-y'_0-y_1|^{d+2\alpha}} dy_0' \right)  dy_1\\
&\leq  \frac{2^{d+2\alpha}C_0}{R_0^{2\alpha}}  \times  \int_{\Omega_-} \frac{1}{|x-y_1|^{d+2\alpha}}dy_1.
\end{align*} 
Assuming $R_0 > (2^{d+2\alpha} C_0)^{\frac{1}{2\alpha}}$ leads to the conclusion that \eqref{objLemma1} holds true for $k=1$.\\
If we assume that it is true for  $k\in \mathbb{N}$, we have by the recursive hypothesis on $I_k$ that
\[ I_{k+1}(x) \leq C_0  \int_{\Omega_+ \cap B(x,R_0)^c}  \frac{ C_1^k}{|x-y_0|^{d+2\alpha}} \int_{\Omega_-} \frac{1}{|y_0-y_1|^{d+2\alpha}} dy_1 dy_0.\]
The conclusions follows from  computations that are similar to those of the case $k=1$. 

\subsection{Proof of Theorem \ref{lemma1}}

\noindent Before providing the proof of Theorem \ref{lemma1} we introduce an intermediate technical result:

\begin{lemma}\label{lemma3}
There exist two constant $C, \sigma>0$ such that for all $x \in \Omega_+$, 
\begin{equation*}\label{lemmaeq3}
\begin{aligned}
&\int_{\Omega_-} \frac{1}{|x-y|^{d+2\alpha}}dy \leq C \int_{(\Omega_{-})_ \sigma} \frac{1}{|x-y|^{d+2\alpha}}dy\\
\text{and} \qquad  &   \int_{\mathcal{C}_{+,k_x}} \frac{1}{|x-y|^{d+2\alpha}}dy \leq C \int_{(\mathcal{C}_{+,k_x})_\sigma} \frac{1}{|x-y|^{d+2\alpha}}dy,
\end{aligned}
\end{equation*}
where $\mathcal{O}_\sigma$ is defined in \eqref{sets} for any set $\mathcal{O}$.
\end{lemma}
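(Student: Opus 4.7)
The plan is to exploit the uniform interior ball condition \eqref{H2Chap4} in order to replace each point $z$ of the integration domain by a fixed-radius ball entirely contained in the ``$\sigma$-thick interior'', and then to transfer the integral by Fubini together with a uniform multiplicity count. A good choice of parameters will be $\sigma\in(0,\varepsilon_0/4)$.

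Fix such $\sigma$. For every $z\in\Omega_-$, \eqref{H2Chap4} supplies $z_\nu\in\Omega$ with $z\in\overline{B(z_\nu,\varepsilon_0)}\subset\Omega$; connectedness forces this interior ball to lie in the component of $z$, hence in $\Omega_-$. Set $A_z:=B(z_\nu,\varepsilon_0/2)$. Then $A_z\subset(\Omega_-)_{\varepsilon_0/2}\subset(\Omega_-)_\sigma$, $|A_z|=\omega_d(\varepsilon_0/2)^d$ does not depend on $z$, and $|z-y|\leq 3\varepsilon_0/2$ for every $y\in A_z$. The assignment $z\mapsto z_\nu$ can be realised as a Borel map (e.g.\ by a lexicographic minimisation among admissible centres), so that all integrands below are measurable in $(z,y)$.

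The first inequality then reduces to a kernel comparison. Since $x\in\Omega_+$ and $z\in\Omega_-$, \eqref{H4Chap4} gives $|x-z|\geq R_0$; assuming $R_0\geq 3\varepsilon_0$ we deduce $\tfrac{1}{2}|x-z|\leq|x-y|\leq\tfrac{3}{2}|x-z|$ for every $y\in A_z$. Averaging the reciprocal of $|x-y|^{d+2\alpha}$ over $A_z$, integrating in $z$, and applying Fubini yields
\[
\int_{\Omega_-}\frac{dz}{|x-z|^{d+2\alpha}}
\leq\frac{2^{d+2\alpha}}{|A_z|}
\int_{(\Omega_-)_\sigma}\frac{dy}{|x-y|^{d+2\alpha}}\,\bigl|\{z\in\Omega_-:y\in A_z\}\bigr|.
\]
The multiplicity $\bigl|\{z\in\Omega_-:y\in A_z\}\bigr|$ is at most $\omega_d(3\varepsilon_0/2)^d$, because $y\in A_z$ forces $|y-z|\leq 3\varepsilon_0/2$. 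This gives the first estimate with a constant $C$ depending only on $d,\alpha,\varepsilon_0$.

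The second inequality is carried out within a single bounded cluster $\mathcal{C}_{+,k_x}$, which inherits the interior ball condition with the same $\varepsilon_0$ since it is a union of components of $\Omega$; the construction $z\mapsto A_z\subset(\mathcal{C}_{+,k_x})_\sigma$ and the Fubini–multiplicity step are identical. The main obstacle here—and the step I expect to be the hardest—is that $x$ and $z$ now live in the same cluster, so the clean kernel comparison of the previous paragraph fails when $|x-z|$ is of order $\varepsilon_0$ or smaller. I would handle this near-diagonal regime by a separate symmetric argument: the uniform interior ball condition at $x$ itself produces a ball $B(x_\nu,\varepsilon_0/2)\subset(\mathcal{C}_{+,k_x})_\sigma$ with $|x-x_\nu|\leq\varepsilon_0$, so the singular contribution on the left around $y=x$ is already controlled, up to multiplicative constants depending only on $d,\alpha,\varepsilon_0$, by the contribution to the right-hand integral coming from this same ball. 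Splitting $\mathcal{C}_{+,k_x}$ into the far part $\{z:|x-z|\geq 3\varepsilon_0\}$ (treated as above) and the near part $\{z:|x-z|<3\varepsilon_0\}$ (absorbed into $B(x_\nu,\varepsilon_0/2)$) then completes the second bound, with constants uniform in $k_x$ thanks to the uniform radii in \eqref{H2Chap4} and \eqref{H4Chap4}.
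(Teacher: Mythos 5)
Your argument for the first inequality is correct and follows a genuinely different route from the paper. The paper asserts (without detailing the aggregation) that it suffices to prove the bound for a single interior ball, $\int_{B(z,\varepsilon_0)}\frac{dy}{|x-y|^{d+2\alpha}}\leq C\int_{B(z,\varepsilon_0)_\sigma}\frac{dy}{|x-y|^{d+2\alpha}}$, and then compares the two ball integrals through the elementary kernel inequalities $\frac{c_1}{t^{d+2\alpha}}\leq\frac{1}{(t+\varepsilon_0)^{d+2\alpha}}$, $\frac{1}{(t-\varepsilon_0)^{d+2\alpha}}\leq\frac{c_2}{t^{d+2\alpha}}$ for $t>R_0$. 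You instead make the aggregation explicit: each $z\in\Omega_-$ gets a fixed-volume ball $A_z\subset(\Omega_-)_\sigma$, the pointwise kernel value at $z$ is replaced by its average over $A_z$ via the two-sided comparability of $|x-y|$ with $|x-z|$ (which holds because $|x-z|\geq R_0\geq 3\varepsilon_0$ while $|y-z|\leq\frac{3}{2}\varepsilon_0$), and Fubini together with a uniform multiplicity bound $\omega_d(3\varepsilon_0/2)^d$ delivers the estimate. This buys you a rigorous treatment of the reduction step that the paper leaves implicit; the constant in your kernel comparison should be $(3/2)^{d+2\alpha}$ rather than $2^{d+2\alpha}$ (from $|x-y|\leq\frac{3}{2}|x-z|$ one gets $|x-z|^{-(d+2\alpha)}\leq(3/2)^{d+2\alpha}|x-y|^{-(d+2\alpha)}$, the other inequality $\frac{1}{2}|x-z|\leq|x-y|$ goes the wrong way), but this is a harmless slip.

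For the second inequality, you correctly single out the near-diagonal regime as the hard case, but the fix you sketch does not close it, and in fact cannot. Take $x\in\Omega_+$ with $0<\delta(x)<\sigma$ (such $x$ are unavoidable: $\delta(x)>0$ always by \eqref{H2Chap4}, and $\delta(x)$ can be arbitrarily small near $\partial\Omega$). Then $B(x,\delta(x))\subset\mathcal{C}_{+,k_x}$, so the left-hand side contains $\int_{B(x,\delta(x))}\frac{dy}{|x-y|^{d+2\alpha}}$, which is infinite since $d+2\alpha>d$. On the other hand $\delta$ is $1$-Lipschitz, so every $y\in(\mathcal{C}_{+,k_x})_\sigma$ satisfies $|x-y|\geq\delta(y)-\delta(x)>\sigma-\delta(x)>0$; as $(\mathcal{C}_{+,k_x})_\sigma$ is bounded, the right-hand side is finite. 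Absorbing the near part of the left side into $\int_{B(x_\nu,\varepsilon_0/2)}\frac{dy}{|x-y|^{d+2\alpha}}$ cannot help: since $|x-x_\nu|\geq\varepsilon_0-\delta(x)>\varepsilon_0/2$, this integral is likewise finite, whereas the near part on the left is infinite. So the comparison fails outright in this regime. This is a real obstruction, not a missing computation — note that the paper disposes of the second inequality with a one-line "similar computations" remark, never uses it elsewhere, and the computations that work for the first inequality rely crucially on $\mathrm{dist}(x,\Omega_-)\geq R_0$, which removes the singularity; no analogue is available when $x$ sits inside the cluster being integrated over.
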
 

\begin{proof}
We prove only the first inequality. The second one can be proved following similar computations. Thanks to the interior ball condition \eqref{H2Chap4}, it is sufficient to prove that  there exists $\sigma,C>0$ such that there holds 
\begin{equation}\label{goalLemma3} 
\int_{B(z, \varepsilon_0)} \frac{1}{ |x-y|^{d+2\alpha}} dy \leq C \int_{B(z, \varepsilon_0)_{\sigma}} \frac{1}{ |x-y|^{d+2\alpha}} dy 
\end{equation}
for any $x\in \Omega_+$ and $B(z, \varepsilon) \subset \Omega_-$. First, we remark that there exists two constants $c_1,c_2>0$ such that for all $t \in ]R_0, +\infty[$, 
\[ \frac{c_1}{t^{d+2\alpha}} \leq \frac{1}{(t + \varepsilon_0 )^{d+2\alpha}} \quad \text{ and }  \frac{1}{(t - \varepsilon_0 )^{d+2\alpha}} \leq \frac{c_2}{t^{d+2\alpha}}.\]
Next, denoting by $\mathrm{m}(E)$ the Lebesgue measure of the set $E$, we define 
\[\sigma = \frac{\varepsilon_0}{4} \quad  \text{ and } \quad C = \frac{2c_2\mathrm{m}(B(0, \varepsilon_0))}{c_1 \mathrm{m}(B(0, \frac{\varepsilon_0}{2})) }.\]
With a such choice of constants, it follows that 
\begin{align*}
C \int_{B(z, \varepsilon_0)_{\sigma}} \frac{1}{ |x-y|^{d+2\alpha}} dy  - \int_{B(z, \varepsilon_0)} \frac{1}{ |x-y|^{d+2\alpha}} dy &\geq \frac{C \mathrm{m}(B(0, \frac{\varepsilon_0}{2})) }{(|x-z| + \varepsilon_0)^{d+2\alpha}} - \frac{\mathrm{m}(B(0, \varepsilon_0))}{(|x-z|-\varepsilon_0)^{d+2\alpha}} \\
& \geq\frac{C c_1 \mathrm{m}(B(0, \frac{\varepsilon_0}{2})) }{|x-z| ^{d+2\alpha}} - \frac{c_2\mathrm{m}(B(0, \varepsilon_0))}{|x-z|^{d+2\alpha}} \\
& \geq \frac{ c_2 \mathrm{m}(B(0, \varepsilon_0))}{|x-z|^{d+2\alpha}} \geq 0.
\end{align*}

\end{proof}

\begin{proof}[Proof of Theorem \ref{lemma1}]
 We split the study into two parts: the study in $\Omega_+$ and the study in $\Omega_-$. Each part is split into two sub-parts :the lower and the upper bounds. 

\textbf{Part 1 : The study in $\Omega_-$. } \textbf{Subpart A : The lower bound. } Let $x \in \Omega_-$ and $z_x \in \Omega_-$, $k \in \left\lbrace 1,...,N \right\rbrace$ such that \eqref{H6Chap4} holds true. Since $\lambda_\alpha (\underline{\Omega}_k)<0$, we deduce the existence of $\underline{n}_{k,z}$ the solution of 
\begin{equation*}
    \left\lbrace 
    \begin{aligned}
    & (-\Delta)^\alpha \underline{n}_{k,z} = \underline{n}_{k,z_x} - \underline{n}_{k,z_x}^2 && \text{ in } \underline{\Omega}_k + z_x, \\
    &\underline{n}_{k,z_x} = 0 && \text{ in } (\underline{\Omega}_k + z_x)^c.
    \end{aligned}
    \right.
\end{equation*}
The maximum principle implies that 
\[ \underline{n}_{k,z_x}(x) \leq n_+(x).\]
Moreover, since $\underline{\Omega}_k$ is bounded and regular, we deduce the existence of $c_k$ such that 
\[c_k \min (\delta(x)^\alpha, 1) \leq \underline{n}_{k,z_x}(x).\]
If we fix $c=\underset{k \in \left\lbrace 1,...,N \right\rbrace}{\min} c_k$ then the conclusion follows. 
\bigbreak
\noindent \textbf{Subpart B : The upper bound. } From the maximum principle, it is clear that $n_+(x) \leq 1$. Therefore, we focus on what happens at the boundary: let $x \in \Omega_-$ such that $\delta(x) < \varepsilon_0$ and $\widetilde{z}_x \in \Omega^c$ provided by the exterior ball condition such that 
\[B(\widetilde{z}_x, \varepsilon_0) \cap \Omega = \emptyset, \quad \overline{B}(\widetilde{z}_x, \varepsilon_0) \cap \overline{\Omega} \neq \emptyset \quad \text{ and } \quad x \in B(\widetilde{z}_x,2 \varepsilon_0) .\]
Then, from the maximum principle applied to $n_+$ and $\psi(\frac{\cdot-\widetilde{z}_x}{\varepsilon_0})$ where $\psi$ is defined by \eqref{psi}, it follows 
\[n_+(x) \leq \psi(\frac{x-\widetilde{z}_x}{\varepsilon_0}) \leq C\delta(x)^\alpha.\]
The conclusion follows.

\textbf{Part 2: The study in $\Omega_+$. } \textbf{Subpart A : The lower bound. } We prove that for all $x \in \Omega_+$
\begin{equation}\label{lowerbound1}
c\min( \delta(x)^\alpha , \varepsilon_0^\alpha) \int_{\Omega_-}\frac{1}{|x-y|^{d+2\alpha}}dy \leq n_+(x).
\end{equation}
Let $x \in \Omega_+$ and $z_x \in \Omega_+$ provided by \eqref{H2Chap4} (remark that $z_x = x$ if $\delta(x) \geq \varepsilon_0$). We define $\underline{n}$ as the solution of 
\begin{equation*}
\left\lbrace
\begin{aligned}
(-\Delta)^\alpha \underline{n} (y) &= 0 && \text{ for } y \in B(z_x, \varepsilon_0), \\
\underline{n}(y) &= n_+ && \text{ for } y \in \Omega_-, \\
\underline{n}(y) &= 0 && \text{ for } y \in \mathbb{R}^d \backslash(B(z_x, \varepsilon_0) \cup \Omega_-).
\end{aligned}
\right.
\end{equation*}
The comparison principle gives that
\[\underline{n}(x) \leq n_+(x).\]
We recall that thanks to \eqref{H6Chap4} for all positive $\sigma$ small enough, there exists $c_\sigma>0$ such that $c_\sigma < n_+(y)$ for all $y \in (\Omega_-)_\sigma$. Formula \eqref{PoissonKernel} gives 
\begin{align*}
\underline{n}(x) &= \int_{\Omega_-} c_\alpha^d \left( \frac{\varepsilon_0^2-|x-z_x|^2}{|y-z_x|^2-\varepsilon_0^2} \right)^\alpha \frac{1}{|x-y|^{d}} n_+(y) dy \\ 
& \geq c c_\alpha^d (\varepsilon_0^2-|x-z_x|^2)^\alpha \int_{\Omega_{-, \sigma}}\frac{c_\sigma}{|y-z_x|^{d+2\alpha}}dy.
\end{align*}
If $ \varepsilon_0 \leq \delta(x)$, we deduce that $z_x =x$ and \eqref{lowerbound1} holds true thanks to Lemma \ref{lemma3}. Otherwise, we have
\[(\varepsilon_0 \delta(x) )^\alpha \leq (\varepsilon_0+ |x-z_x|)^\alpha \delta(x)^\alpha =  (\varepsilon_0^2-|x-z_x|^2)^\alpha  .\]
By uniform continuity and compactness of $B(z_x, \varepsilon_0)$, we deduce that for all $y \in \Omega_-$ there holds $\frac{c}{|x-y|^{d+2\alpha}} \leq \frac{1}{|z_x -y|^{d+2\alpha}}$, thus \eqref{lowerbound1} follows from Lemma \ref{lemma3}.\\
\bigbreak
\noindent \textbf{Subpart B: The upper bound.  } Thanks to Lemma \ref{lemma2}, we only have to consider $x \in \left\lbrace y \in \Omega_+ \text{ such that } \delta(y)< \varepsilon_0 \right\rbrace$. Let $k \in  \mathbb{N}$ and $x \in \mathcal{C}_{+,k}$ be such that $\delta(x) < \varepsilon_0$. Assumption \eqref{H2Chap4} ensures the existence of $\widetilde{z}_x \in \mathbb{R}^d$ such that 
\begin{equation}\label{zxchap4}
 B(\widetilde{z_x}, \varepsilon_0) \cap \Omega = \emptyset, \ \partial B(\widetilde{z_x}, \varepsilon_0) \cap \partial\Omega \neq \emptyset \text{ and } x\in  B(\widetilde{z_x}, 2\varepsilon_0) \cap \Omega . 
\end{equation}
As mentioned in section 2.1, the aim of the proof is to prove that the constant $C_k$ defined by \eqref{ObjCk} verifies 
\begin{equation}\label{aimlemma1up}
n_+(x) \leq C_k \psi \left(\frac{x - \widetilde{z}_x}{\varepsilon_0} \right)
\end{equation}
where $\psi_k$ is defined by \eqref{psi}. Let $\widetilde{\chi}_k$ be a function of $C^\infty_c (\mathbb{R}^d, [0,1])$ such that 
\begin{equation}\label{chik}
\widetilde{\chi}_k(x) = 
\left\lbrace
\begin{aligned}
&1 && \text{ for } x \text{ such that } \mathrm{dist}(x, \mathcal{C}_{+,k}) < \frac{\varepsilon_0}{4}\\
&0 && \text{ for } x \text{ such that } \mathrm{dist}(x, \mathcal{C}_{+,k}) > \frac{\varepsilon_0}{2}. 
\end{aligned}
\right.
\end{equation}
In order to prove \eqref{aimlemma1up}, we prove first that for all $y \in T(\widetilde{z}_x, \varepsilon_0, 4\varepsilon_0) \cap \mathcal{C}_{+,k}$ 
\begin{equation}\label{upboundgoal1}
(-\Delta)^\alpha \left(n_+\widetilde{\chi}_k(y) - C_k\psi(\frac{y-\widetilde{z}_x}{\varepsilon_0}) \right) < 0.
\end{equation}
Next, we prove that for all $y \in \left(T(\widetilde{z}_x, \varepsilon_0, 4\varepsilon_0) \cap \mathcal{C}_{+,k} \right)^c$, we have 
\begin{equation}\label{upboundgoal2}
n_+\widetilde{\chi}_k(y) - C_k\psi(\frac{y-\widetilde{z}_x}{\varepsilon_0}) \leq 0.
\end{equation}
The conclusion follows thanks to the maximum principle. 

\textbf{Proof that \eqref{upboundgoal1} holds true. } Let $y\in T(\widetilde{z}_x, \varepsilon_0, 4\varepsilon_0) \cap \mathcal{C}_{+,k}$, then thanks to the properties of $\psi$ (see \eqref{psi}), and Lemma \ref{lemma2}, we obtain
\begin{align*}
&(-\Delta)^\alpha \left(n_+\widetilde{\chi}_k(y) - C_k\psi(\frac{y-\widetilde{z}_x}{\varepsilon_0}) \right)\\
\leq& (-\Delta)^\alpha n_+(y) + \int_{\mathcal{C}_{+,k}^c} \frac{n_+(y_0)(1-\widetilde{\chi}_k(y_0))}{|y-y_0|^{d+2\alpha}}dy_0 - \frac{C_k}{\varepsilon_0^{2\alpha}}\\
\leq& n_+(y)   + \int_{\Omega_-} \frac{1}{|y_0-y|^{d+2\alpha}}dy_0+ \int_{\mathcal{C}_{+,k}^c \cap \Omega_+} \frac{n_+(y_0)}{|y-y_0|^{d+2\alpha}}dy_0 -C_k\\
 \leq& (C_2+1) \int_{\Omega_-} \frac{1}{|y-y_0|^{d+2\alpha}}dy_0 + C_2 \int_{\mathcal{C}_{+,k}^c \cap \Omega_+} \frac{1}{|y-y_0|^{d+2\alpha}} \int_{\Omega_-}\frac{1}{|y_0-y_1|^{d+2\alpha}}dy_1 dy_0 - C_k \\
\leq& \left( C_2 (1+\frac{C_{d, \alpha, r_0}}{R_0^{2\alpha}} ) + 1 \right) \int_{\Omega_-}\frac{1}{|y-y_0|^{d+2\alpha}}dy_0 - C_k.
\end{align*}
We conclude thanks to \eqref{ObjCk} that \eqref{upboundgoal1} holds true. 

\textbf{Proof that \eqref{upboundgoal2} holds true. } According to \eqref{zxchap4}, it is straightforward that $(n_+ \widetilde{\chi}_k(y) - C_k\psi(\frac{y-\widetilde{z}_x}{\varepsilon}))\leq 0$ for all $y \in B(\widetilde{z}_x, \varepsilon_0) \cup \left(B(\widetilde{z}_x , 4\varepsilon_0)^c \cap \mathcal{C}_{+,k}^c \right)$. Therefore, we focus on $y \in B(\widetilde{z}_x , 4\varepsilon_0)^c \cap \mathcal{C}_{+,k}$. Thanks to Lemma \ref{lemma2} and the properties of $\psi$ (introduced in \eqref{psi}), we obtain
\begin{equation*}
n_+(y) \chi_k (y) - C_k \psi(\frac{y - \widetilde{z}_x}{\varepsilon_0}) \leq n_+(y) - C_k \leq C_2 \int_{\Omega_-} \frac{1}{|y-y_0|^{d+2\alpha}}dy_0 - C_k < 0.
\end{equation*}
We deduce that \eqref{upboundgoal2} holds true. \\

The conclusion follows thanks to Proposition \ref{propg1}.
\end{proof}

\subsection{Some examples where $\mathcal{G}$ is explicit}

In this subsection, we detail some examples where the function $\mathcal{G}$ is more explicit. It highlights how the shape of the steady solution is strongly connected to the domain $\Omega$. 

\begin{enumerate}
    \item \textbf{$\Omega = \mathbb{R}^d$. } It is well known that in this case 
    \[\mathcal{G}(x) = 1.\]
    \item \textbf{$\Omega$ is bounded. } In this case, we have \[\mathcal{G}(x) = \delta(x)^\alpha.\]
    We recover here a consequence of \cite{Ros-Oton-Serra}.
    \item \textbf{$\Omega$ is unbounded and $\Omega_+ = \emptyset$. } Following Theorem \ref{lemma1}, there holds
    \[\mathcal{G}(x) = \min(1, \delta(x)^\alpha).\]
    Remark that this case contains the periodic patchy domain (see \cite{papier2} for more details):
    \[\Omega  = \underset{k \in \mathbb{Z}^d}{\bigcup} (\Omega_0 + a_k)\]
    where $\Omega_0 +a_k = \left\lbrace x \in \mathbb{R}^d \ | \ x-a_k \in \Omega_0 \right\rbrace$, $\Omega_0$ bounded and such that $\lambda_\alpha (\Omega_0) <0$ and $a_k \in \mathbb{R}^d$ such that 
    \[a_{k+e_i} - a_k = a_{e_i} \quad \text{ where } e_i \text{ is the } i^{th} \text{ element of the canonical basis of } \mathbb{Z}^d.\]
    \item \textbf{$\Omega$ is unbounded with $\Omega_-$ bounded. } In this case, up to a translation, we can assume that $B(0, \varepsilon_0) \subset \Omega_- \subset B(0, R_1)$ (with $R_1>0$ large enough). Then it is easy to observe that there exists two constants $c,C>0$ such that 
    \[\frac{c \min(\delta(x)^\alpha , 1) }{1+|x|^{d+2\alpha}} \leq \mathcal{G} (x) \leq  \frac{C \min(\delta(x)^\alpha , 1) }{1+|x|^{d+2\alpha}}.\]
    \item \textbf{The dimension $d=1$ and $\Omega_- = \mathbb{R}^-$. } In this case, a straightforward computation leads to \[c\min(\delta(x)^\alpha , 1) \left( 1_{\Omega_-}(x) + \frac{ 1_{\Omega_+}(x) }{1+|x|^{2\alpha}}   \right) \leq \mathcal{G} (x) \leq C \min(\delta(x)^\alpha , 1) \left( 1_{\Omega_-}(x) + \frac{ 1_{\Omega_+}(x) }{1+|x|^{2\alpha}}   \right).\]
\end{enumerate}

Of course, the hypothesis on $\Omega$ allows more general domains. However, with these five examples, we can already observe how the behavior of the solution $n_+$ may change from one domain to another.

\section{Uniqueness and attractivity of the steady state}\label{sectionUniqueness}

\subsection{Uniqueness}

As mentioned before, the idea to prove uniqueness is to compare two solutions $u_+$ and $v_+$ thanks to Theorem \ref{lemma1}. Before providing the details of the proof, we make an easy but important remark:
\begin{equation}\label{rqG}
\forall x \in \mathbb{R}^d, \quad    \mathcal{G}(x) \leq 1.
\end{equation}
In the case where $\Omega_+ = \emptyset$, the proof would be very easy. Because of the possibly degeneracy of the solutions in $\Omega_+$, the proof is not just an adaptation of the case $\Omega_+ = \emptyset$. It is inspired by the strategy of the proof of the fractional Hopf Lemma provided by Grecco and Servadei in \cite{Hopf_lemma_Greco}.

\begin{proof}[Proof of Theorem \ref{theorem1Chap4}]

Let $u_+$ and $v_+$ be two bounded non trivial solutions of \eqref{stationary_state}. Thanks to Theorem \ref{lemma1}, there exists a constant $L>0$ such that 
\[u_+ \leq L v_+.\]
We define $l= \inf \left\lbrace L > 1 \text{ such that } u_+ \leq L v_+ \right\rbrace$. We are going to prove by contradiction that $l=1$ because if $l=1$ then $u_+ \leq v_+$ and with the same argument, $v_+ \leq u_+$ and then the conclusion follows. Thus, we assume $l>1$ by contradiction. Next we define 
\[w = \underset{ x \in \Omega}{\inf}  \ \frac{(lv_+ - u_+ )(x)}{\mathcal{G}(x)}.\]
Thanks to the definition of $l$, we deduce that $w=0$ (otherwise we can construct $l^*<1$ such that $u_+< l^*v_+$ holds true, see the proof of Theorem 1 in \cite{papier2} for more details). Let $(x_n)_{n \in \mathbb{N}} \in \Omega$ be a minimizing sequence 
\[ \text{ i.e. }\qquad  \frac{(lv_+ - u_+ )(x_n)}{\mathcal{G}(x_n)} \underset{ n \rightarrow +\infty}{\longrightarrow} 0.\]
In order to localise where we use the maximum principle or the fractional Hopf Lemma, we will use the help from the bubble function $\beta_{z,r}$:
\begin{equation}\label{beta}
\beta_{z,r}(x) = C_\alpha (r^2 - |x-z|^2)^\alpha 1_{B(z,r)}(x) \qquad  \text{ with } C_\alpha = \frac{\Gamma (\frac{d}{2})}{4^\alpha \Gamma(1+\alpha) \Gamma(\frac{d}{2}+\alpha)}.
\end{equation}
Indeed, $\beta_{z,r}$ is solution of the equation 
\begin{equation}
\left\lbrace 
\begin{aligned}
(-\Delta)^\alpha \beta_{z,r} (x) &= 1 && \text{ for } x \in B(z,r), \\
\beta_{z,r}(x) &= 0 && \text{ for } x \in B(z, r)^c.
\end{aligned}
\right.
\end{equation}
We distinguish 2 cases: (up to a subsequence) $\inf \delta(x_n)>\delta_0>0$ and $\delta(x_n) \underset{n \rightarrow +\infty}{\longrightarrow} 0$. \\
In the first case, we define $z_n = x_n$ and $r = \delta_0$, and in the second one, we define $r= \varepsilon_0$ and we use the interior ball condition (hypothesis \eqref{H2Chap4}) to deduce the existence of $z_n\in \Omega$ such that 
\[\delta(x_n) = \varepsilon_0 - |z_n -x_n|.\]
\begin{figure}[h!]
\begin{center}
\includegraphics[width=12cm,height=4cm]{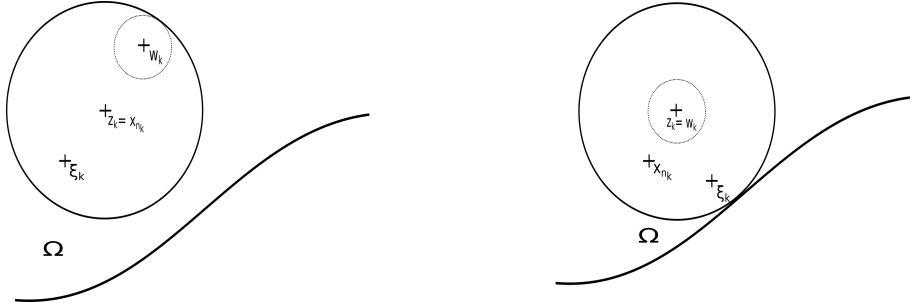}    
\caption{The left picture illustrates the case where $\inf \ \delta(\xi_k) > \delta_0$ (in this case, $z_k = x_k$). The right picture illustrates the case $\inf\delta(\xi_k) = 0$. In this case, up to a subsequence, we can assume that $\delta(\xi_k) < \frac{\varepsilon_0}{4}$ and $w_k = z_k$. }
\end{center}
\end{figure}

We prove by contradiction that there exists $\rho>0$ such that 
\begin{equation}\label{Obj5}
\forall x \in \mathbb{R}^d, \quad \rho \beta_{z_n, r}(x) \leq \frac{(lv_+ - u_+)(x_n)}{G(x_n)}.
\end{equation}
Indeed, if \eqref{Obj5} holds true for all $n \in \mathbb{N}$, then we deduce the following contradiction 
\begin{align*}
\rho C_\alpha r^\alpha = \underset{x \in B(0, \varepsilon_0)}{\inf} \frac{\rho \beta_{0, r}(x) }{(r-|x|)^\alpha} &= \underset{x \in B(z_n, r)}{\inf} \frac{\rho \beta_{z_n, r}(x) }{(r^\alpha-|x-z_n|)^\alpha} \\
&\leq \frac{(lv_+ - u_+)(x_n)}{\min(r_0,\delta(x_n))^\alpha  G(x_{n})} =  \frac{(lv_+ - u_+)(x_n)}{\mathcal{G}(x_{n})} \underset{n \rightarrow +\infty}{\longrightarrow} 0.
\end{align*}

Assume by contradiction that \eqref{Obj5} is false. Then since $\mathrm{supp}(\beta_{z_n, r}) = B(z_n, r)$, we deduce that 
\[\forall k \in \mathbb{N},\  \exists n_k>n_{k-1} \text{ and } y_k \in B(z_{n_k}, r) \text{ such that } \frac{(lv_+-u_+)(y_k)}{G(x_{n_k})} < \frac{\beta_{z_{n_k}, r} (y_k)}{k}.\]
Since for all $x \in \mathbb{R}^d \backslash B(z_{n_k}, r)$ we have 
\[ \frac{\beta_{z_{n_k}, r} (x)}{k} - \frac{(lv_+-u_+)(x)}{G(x_{n_k})} = -\frac{(lv_+-u_+)(x)}{G(x_{n_k})}  < 0\]
we deduce that $\frac{\beta_{z_{n_k},r}}{k} - \frac{(lv_+-u_+)}{G(x_{n_k})}$ takes its maximum at some $\xi_k \in B(z_{n_k}  , r)$. Remark that since $(\beta_{z_{n_k}, r})_{k \in \mathbb{N}}$ is uniformly bounded, we have that 
\begin{equation}\label{CV0}
0 < \frac{(lv_+-u_+)(\xi_k)}{G(x_{n_k})} < \frac{\sup \beta_{z_{n_k}, r}}{k} \underset{ k \rightarrow +\infty}{\longrightarrow} 0.
\end{equation}
Next, if we compute $(-\Delta)^\alpha (lv_+ -  u_+)$, we find that 
\begin{align*}
(-\Delta)^\alpha (lv_+ - u_+) &= lv_+ -lv_+^2-u_+ + u_+^2 \\
&= (lv_+ - u_+) - (lv_+ - u_+)(v_+ + u_+) + u_+v_+(l-1) \\
& \geq (lv_+ - u_+) (1-(u_+ + v_+)).
\end{align*}
Recalling that $u_+ \leq 1$ and $v_+ \leq 1$, we obtain 
\begin{equation}\label{Hopf1}
 -\frac{(lv_+-u_+)}{G(x_{n_k})} \leq  (-\Delta)^\alpha \frac{(lv_+ - u_+)}{G(x_{n_k})} .
\end{equation}
On one hand, if we evaluate \eqref{Hopf1} at $\xi_k$ we find thanks to \eqref{CV0} and Proposition \eqref{propg1} that for $k$ large enough 
\begin{equation}\label{HopfI0}
-\varepsilon \leq -\frac{C_1 G(\xi_k) \delta(\xi_k)^\alpha}{G(x_{n_k})} \leq -\frac{(lv_+-u_+)(\xi_k)}{G(x_{n_k})} \leq   (-\Delta)^\alpha \frac{(lv_+ - u_+)(\xi_k)}{G(x_{n_k})} 
\end{equation}
with $\varepsilon$ as small as we want. \\
On the other hand, we claim that there exists $c_*>0$ such that $   (-\Delta)^\alpha \frac{(lv_+ - u_+)(\xi_k)}{G(x_{n_k})} \leq -c_* + \varepsilon$ with $\varepsilon$ as small as we want. Indeed, we first introduce $w_k \in B(z_{n_k}, r)$ such that 
\begin{equation}\label{wk}
B(w_k, \frac{r}{4}) \subset B(z_{n_k}, r), \quad \delta(w_k) > \frac{r}{2} \quad \text{ and } \quad \forall y \in B(w_k, \frac{r}{4}), \ |y- \xi_k| > \frac{r}{4}
\end{equation}
(remark that one can take $w_k = z_{n_k}$ if $\delta( x_{n_k}) \underset{k \rightarrow +\infty}{\longrightarrow} 0$). Next, we split $(-\Delta)^\alpha \frac{(lv_+ - u_+)(\xi_k)}{G(x_{n_k})}$ in the following way
\begin{align*}
(-\Delta)^\alpha \frac{(lv_+ - u_+)(\xi_k)}{G(x_{n_k})} =& \frac{1}{G(x_{n_k})}\int_{B(w_k, \frac{r}{4})} \frac{(lv_+ - u_+)(\xi_k) - (lv_+ - u_+)(y)}{|\xi_k - y|^{d+2\alpha}}dy \\
& + \frac{1}{G(x_{n_k})} \int_{B(w_k, \frac{r}{4})^c} \frac{(lv_+ - u_+)(\xi_k) - (lv_+ - u_+)(y)}{|\xi_k - y|^{d+2\alpha}}dy \\
=& I_1 + I_2.
\end{align*}
For $I_1$, according to \eqref{CV0}, \eqref{wk}, Theorem \ref{lemma1} and Corollary \ref{propg3} we deduce that for $k$ large enough we have
\begin{equation}\label{HopfI1}
\begin{aligned}
I_1 &= \frac{m(B(0,  \frac{r}{4}))(lv_+ - u_+)(\xi_k)}{G(x_{n_k})} - \int_{B(w_k, \frac{r}{4})}  - \frac{(lv_+ - u_+)(y)}{G(x_{n_k})|\xi_k - y|^{d+2\alpha}}dy \\
& \leq \frac{m(B(0,  \frac{r}{4}))(lv_+ - u_+)(\xi_k)}{G(x_{n_k})} - \int_{B(w_k, \frac{r}{4})}   \frac{(c_1 + C_1) G(y) \min(\delta(y)^\alpha , 1) }{G(x_{n_k})|\xi_k - y|^{d+2\alpha}}dy \\
& \leq \frac{\varepsilon}{2} - r^\alpha (c_1 + C_1) \  \underset{y \in B(w_k, \frac{r}{4})}{\inf} \  \frac{G(y)}{G(x_{n_k})} \   \underset{ \xi \in T(0,\frac{r}{4}, \frac{3r}{4}) }{\inf} \  \int_{B(0, \frac{r}{4})} \frac{1}{|\xi-y|^{d+2\alpha}}dy\\
&=\frac{\varepsilon}{2} -c_*.
\end{aligned}
\end{equation}
Note that $c_*$ depends only on $r$ and not on $k$. For $I_2$, we recall that 
\[\underset{ x \in \mathbb{R}^d}{\max}\ \left(  \frac{\beta_{z_{n_k}, r} (x)}{k} - \frac{(lv_+-u_+)(x)}{G(x_{n_k})} \right) = \frac{\beta_{z_{n_k}, r}(\xi_k) }{k} - \frac{(lv_+-u_+) (\xi_k)}{G(x_{n_k})}. \]
Then we deduce that for $k$ large enough we have
\begin{equation}\label{HopfI2}
\begin{aligned}
I_2 &\leq \frac{1}{k} \int_{B(w_k, \frac{r}{4})^c} \frac{\beta_{z_{n_k}, \varepsilon_0} (\xi_k) - \beta_{z_{n_k}, \varepsilon_0} (y)}{|\xi_k - y  |^{d+2\alpha}}dy \\
&= \frac{1}{k} \left((-\Delta)^\alpha \beta_{z_{n_k}, r}(\xi_k) -  \int_{B(w_k, \frac{r}{4})} \frac{\beta_{z_{n_k}, r} (\xi_k) - \beta_{z_{n_k}, r} (y)}{|\xi_k - y  |^{d+2\alpha}}dy \right)\\
& \leq  \frac{1}{k} \left(1 + \underset{\xi \in T(0, \frac{r}{4}, \frac{3r}{4})}{\max} \int_{B(0, \frac{\varepsilon_0}{2})} \frac{\beta_{0, r} (y)}{|\xi - y  |^{d+2\alpha}}dy \right)\\
& \leq \frac{\varepsilon}{2}.
\end{aligned}
\end{equation}
Finally, combining \eqref{HopfI0}, \eqref{HopfI1} and \eqref{HopfI2}, we deduce that for $k$ large enough we have 
\[-\varepsilon \leq   (-\Delta)^\alpha \frac{(lv_+ - u_+)(\xi_k)}{G(x_{n_k})} \leq -c_* + \varepsilon\]
which is a contradiction for $\varepsilon = \frac{c_*}{4}$.

\end{proof}

\subsection{Convergence to the steady state}\label{section4}

\noindent The idea is quite classical: it consists in  enclosing the solution of \eqref{equation_with_time} between an increasing (with respect to the time) sub-solution and a decreasing (with respect to the time) super-solution. The specific ingredient is, once again,  that the behaviour of the solution of the Cauchy Problem at infinity has to match that of the steady solution. This is achieved through estimates of the heat kernel developed in \cite{papier2} (Theorem 2 p.3) for general domains which satisfy the uniform interior and exterior ball condition (here assumed in the assumption \eqref{H2Chap4}). The details being otherwise standard, we just present an overview of the proof. 

\bigbreak

\noindent \textbf{The super-solution. } Up to a translation, there is no loss of generality to assume that
\[ B(0, \varepsilon_0) \subset \Omega_-.\]
On one hand thanks to Lemma \ref{Lemma2}, there exists some constants $c>0$ (that may change from line to line) such that  
\begin{equation}\label{timesuper1}
\begin{aligned}
    n_+(x) &\geq c \min (\delta(x)^\alpha , 1) \times \left(1_{\Omega_-}(x) + \int_{\Omega_-} \frac{1_{\Omega_+}(x)}{|x-y|^{d+2\alpha}}dy \right) \\ 
    &\geq c \min (\delta(x)^\alpha , 1) \times \left(1_{\Omega_-}(x) + \int_{B(0,\varepsilon_0)} \frac{1_{\Omega_+}(x)}{|x-y|^{d+2\alpha}}dy \right) \\ 
    &\geq \frac{c \min (\delta(x)^\alpha , 1)}{1+|x|^{d+2\alpha}}. 
\end{aligned}
\end{equation}
On the other hand, thanks to Theorem 2 in \cite{papier2}, there exists a constant $C>0$ such that 
\begin{equation}\label{timesuper2}
     n(x,t=1) \leq  \frac{C \min(\delta(x)^{\alpha},1)}{1+|x|^{d+2\alpha}} . 
\end{equation}
From \eqref{timesuper1} and \eqref{timesuper2}, we deduce that there exists a constant $C>1$ such that 
\[n(x,t=1) \leq C n_+(x).\]
Next, we define 
\begin{equation*}
    \left\lbrace 
    \begin{aligned}
        &\partial_t \overline{n} + (-\Delta)^\alpha \overline{n} = \overline{n} - \overline{n}^2&& \quad \text{ for } (x,t) \in \Omega \times ]0, +\infty[, \\
        &\overline{n}(x,t)= 0&& \quad \text{ for } (x,t) \in \Omega^c \times [0, +\infty[, \\
        & \overline{ n} (x,t=0) = C n_+(x)&& \quad \text{ for } x \in \Omega.
    \end{aligned}
    \right.
\end{equation*}
Since, in the distributional sense, we have
\[\underset{ t \rightarrow 0^+}{\lim } \partial_t \overline{n}(x,t) =  \underset{ t \rightarrow 0^+}{\lim } (C\overline{n}(x,t) - C\overline{n}(x,t)^2 - (-\Delta)^\alpha C\overline{n}(x,t)) = C n_+(x) (1-C)\leq 0.\]
We deduce that $\overline{n}$ is decreasing. Since it is bounded from below by $n_+$, it converges to a non-trivial stationary state of \eqref{stationary_state} and by uniqueness of $n_+$ we deduce that $\overline{n}(x,t) \underset{t \rightarrow +\infty}{\longrightarrow} n_+(x)$. 

\bigbreak

\noindent \textbf{The sub-solution. } As previously, we assume that $ B(0, \varepsilon_0) \subset \underline{\Omega}_1 \subset \Omega_-$ where $\underline{\Omega}_1$ is defined by \eqref{H6Chap4}. Thanks to Theorem 2 in \cite{papier2}, there exists a constant $c>0$ such that 
\[\frac{c \min( \delta(x)^\alpha , 1)}{1+|x|^{d+2\alpha}} \leq n(x,t=1).\]
It follows that there exists $\sigma>0$ such that 
\[\sigma < \underset{x \in \underline{\Omega}_1}{\inf} n(x,t=1) .\]
Next, if we denote by $\phi_1$ the principal eigenfunction of $(-\Delta)^\alpha - Id $ in $\underline{\Omega}_1$, it follows that the solution $\underline{n}$ of 
\begin{equation*}
    \left\lbrace 
    \begin{aligned}
        &\partial_t \underline{n} + (-\Delta)^\alpha \underline{n} = \underline{n} - \underline{n}^2&& \quad \text{ for } (x,t) \in \Omega \times ]0, +\infty[, \\
        &\underline{n}(x,t)= 0&& \quad \text{ for } (x,t) \in \Omega^c \times [0, +\infty[, \\
        & \underline{ n} (x,t=0) = \frac{\min(\sigma, |\lambda_1|) \phi_1(x)}{\| \phi_1\|_\infty }&& \quad \text{ for } x \in \Omega.
    \end{aligned}
    \right.
\end{equation*}
is increasing with respect to the time. Indeed, it is sufficient to verify it at time $t=0$. For $x \in \underline{\Omega}_1$, there holds (in a distributional sense)
\begin{align*}
\underset{ t \rightarrow 0^+}{\lim} \  \partial_t \underline{n}(x,t) &= \underset{ t \rightarrow 0^+}{\lim} \   ( \underline{n}(x,t) -  \underline{n}(x,t)^2 -(-\Delta)^\alpha  \underline{n}(x,t) )\\
&= \frac{\min(\sigma, |\lambda_1|) \phi_1(x) }{\|\phi_1\|_\infty} ( |\lambda_1| - \frac{\min(\sigma, |\lambda_1|) \phi_1(x)}{\|\phi_1\|_\infty} )\\
&\geq 0.
\end{align*}
For $x \in \Omega \backslash \underline{\Omega}_1$, there holds (still in a distributional sense)
\[\underset{ t \rightarrow 0^+}{\lim} \ \partial_t \underline{n}(x,t) = \underset{ t \rightarrow 0^+}{\lim} \   ( \underline{n}(x,t) -  \underline{n}(x,t)^2 -(-\Delta)^\alpha  \underline{n}(x,t) ) = \int_{\mathbb{R}^d} \frac{ \frac{\min(\sigma, |\lambda_1|) \phi_1(y)}{\| \phi_1\|_\infty }}{|x-y|^{d+2\alpha}}dy > 0.\]
Finally, $\underline{n}$ is increasing and bounded therefore point-wise converging. By fractional elliptic regularity, the limit is a solution of \eqref{stationary_state} and by uniqueness of the non trivial stationary state $n_+$ we conclude that $\underline{n}(x,t) \underset{t \rightarrow +\infty}{\longrightarrow} n_+(x)$.

\bigbreak 

\textbf{Conclusion. } Since the initial datum are right ordered 
\[ \text{i.e. } \quad \underline{n}(x,t=0) \leq n(x,t=1) \leq \overline{n}(x,t=0),\]
we conclude thanks to the comparison principle and the conclusions of the two lasts parts that 
\[\underset{t \rightarrow +\infty}{\lim} \underline{n}(x,t) = n_+(x) \leq \underset{t \rightarrow +\infty}{\lim } n(x,t) \leq \underset{t \rightarrow +\infty}{\lim} \overline{n}(x,t) = n_+(x).\]
This ends the proof of Corollary \ref{propagation}.

\section{Numerical illustrations, perspectives}\label{section_numeric}

We provide some numerical illustrations of the results developed in this section. More precisely, we investigate the large time of simulations of equation \eqref{equation_with_time} with two 1-dimensional disconnected components modeled by a finite difference method. We vary the distance $\mu$ between the two components and the exponent of the diffusivity $\alpha$. We recover that if the distance $\mu$ is to high or the constant $\alpha$ is to closed to $1$ then the solution $n$ of \eqref{equation_with_time} tends to $0$ which means that $\lambda_{\alpha}(\Omega_{1,2,\mu})\geq 0$. Whereas if the distance $\mu$ is not to high and the constant $\alpha$ is not closed to $1$ then the solution $n$ tends to a non-trivial positive stationary state which means that $\lambda_{\alpha}(\Omega_{1,2,\mu}) < 0$.

The first simulation (Figure \ref{Fig1Chap4}) shows the numerical solution $n$ at time $T=2000$ and $T=4000$. We can not distinguish the difference between the two drawings, we deduce that we have reached the stationary state. 
\begin{figure}[h]
\caption{$\alpha = 0.5, \ \mu=0.5, \ \Omega_1=\Omega_2 = ]0,2[$}
\centering
\includegraphics[width=12cm,height=6cm]{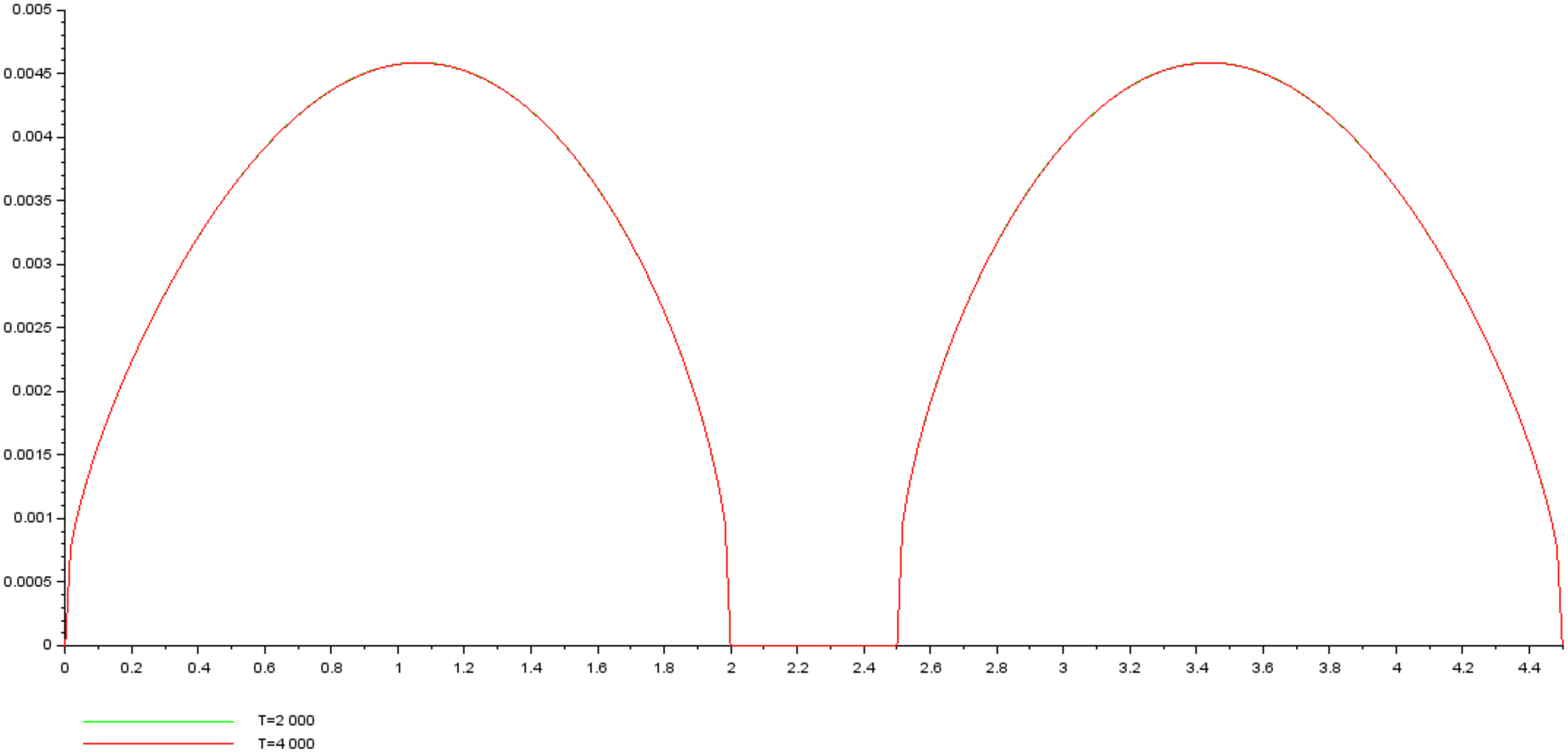} \label{Fig1Chap4}
\end{figure}

Next, in Figure \ref{Fig2} we increase $\alpha$. We put $\alpha = 0.8$, and we find that the solution is almost $0$. 
\begin{figure}[h]
\caption{$\alpha = 0.8, \ \mu=0.5, \ \Omega_1=\Omega_2 = ]0,2[$}
\centering
\includegraphics[width=12cm,height=6cm]{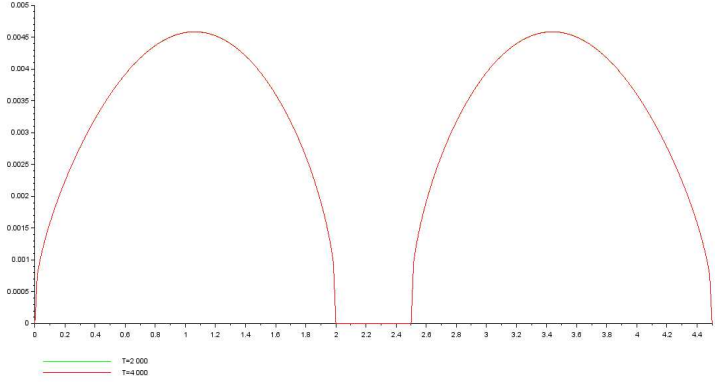} \label{Fig2}
\end{figure}
\newpage
Finally, Figure \ref{Fig3} illustrate Theorem \ref{theorem_4} by the following simulation :
\begin{figure}[h]
\caption{$\alpha = 0.5, \ \mu=0.0001, \ \Omega_1=\Omega_2 = ]0,2[$}
\centering
\includegraphics[width=12cm,height=6cm]{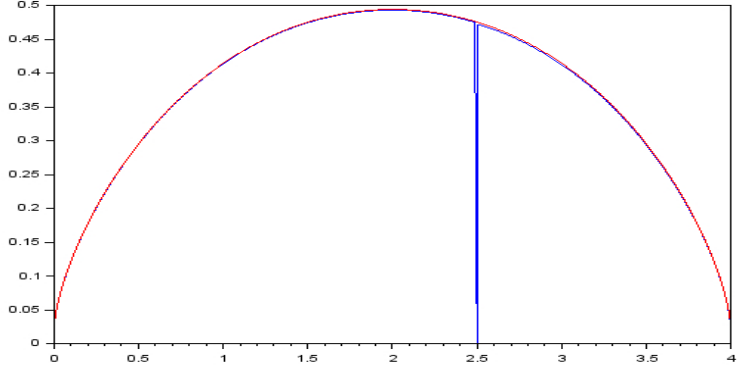} \label{Fig3}
\end{figure}

As a conclusion to this work, let us recall that, after establishing conditions on a domain $\Omega$ which ensure the existence and the uniqueness of the stationary state of the fractional Fisher-KPP equation, we focus on the principal eigenvalue of $(-\Delta)^\alpha - Id$ in one dimension of domain composed by two bounded connected components. This study is strongly related to the issue of existence and uniqueness of the stationary state of the fractional Fisher-KPP equation.

The perspective are the followings. We would like to relax the hypothesis on the domain $\Omega$. Indeed, rather than a minimal distance between two patches, we would like to assume that 
\[\exists \lambda_0>0 \text{ such that } \forall R>0,\quad  \lambda_\alpha (\Omega_+ \cap B(0, R) ) > \lambda_0.\]
We also expect to prove the continuity result on the principal eigenvalue in the multi-dimensional case. Finally, we would like to have a better understanding of the dynamic of $\lambda_{\alpha}(]-a-\mu, -\mu [\cup]\mu, a+\mu[)$ when $(\alpha ,\mu) \rightarrow (1,0)$.




\section*{Appendix - Proof of Lemma \ref{Lemma2}}\label{AppendixB1}
\addcontentsline{toc}{section}{Appendix - Proof of Lemma \ref{Lemma2}}

%
%

We provide here the proof of Lemma \ref{Lemma2}. Before giving the proof,  we introduce a new notation: 
\begin{notation}
For all $\phi \in H^\alpha_0 (\Omega_{1,2,\mu})$ and $i \in \left\lbrace 1,2 \right\rbrace$, we will denote by $\phi^i$ the function $\phi$ restricted to the set $\Omega_i$ and extended by $0$ outside $\Omega_i$
\[\text{i.e.  } \ \phi^i (x) = \phi(x) 1_{\Omega_i}(x).\]
We also denote by $\underline{\lambda}_i$ the principal eigenvalue of $(-\Delta)^\alpha - Id$ in $\Omega_i$ with $0$ exterior Dirichlet conditions. 
\end{notation}
\begin{remark}
For all $\phi \in H^\alpha_0(\Omega_{1,2,\mu})$, we have
\[\phi(x) = \phi^1(x) + \phi^2 (x) \ \text{ and } \ \phi^i \in H^\alpha_0(\Omega_i) \  \text{ for all } \ i \in \left\lbrace 1,2 \right\rbrace. \]
\end{remark}

\begin{proof}
The aim of the proof is to prove that for $\mu$ large enough, there exists $C>0$ such that 
\[\lambda_\alpha (\Omega_{1,2,\mu }) \geq \min(\underline{\lambda}_{1 ,\alpha}, \underline{\lambda}_{2, \alpha}) - \frac{C}{\mu^{1+2\alpha}}.\]
The conclusion follows by the intermediate value Theorem. We start from the Rayleigh quotient defining $\underline{\lambda}_{i, \alpha}$: 

\begin{align*}
\underline{\lambda}_{i,\alpha} &=  \frac{ \int_{\mathbb{R}}\int_\mathbb{R} \frac{(\underline{\phi}_{i,\alpha}(x) - \underline{\phi}_{i,\alpha}(y))^2}{|x-y|^{1+2\alpha}}dy - (\underline{\phi}_{i,\alpha}(x))^2 dx }{\int_{\Omega_i} (\underline{\phi}_{i,\alpha}(x))^2dx}  \\
&=  \frac{ \int_{\Omega_i}\int_\mathbb{R} \frac{(\underline{\phi}_{i,\alpha}(x) - \underline{\phi}_{i,\alpha}(y))^2}{|x-y|^{1+2\alpha}}dy - (\underline{\phi}_{i,\alpha}(x))^2 dx }{\int_{\Omega_i} (\underline{\phi}_{i,\alpha}(x))^2 dx} +  \frac{ \int_{\mathbb{R} \backslash \Omega_i}\int_{\Omega_i} \frac{(\underline{\phi}_{i,\alpha}(y))^2}{|x-y|^{1+2\alpha}}dy  dx }{\int_{\Omega_i} (\underline{\phi}_{i,\alpha}(x))^2 dx} \\
&= \frac{ \int_{\Omega_i}\int_{\Omega_i} \frac{(\underline{\phi}_{i,\alpha}(x) - \underline{\phi}_{i,\alpha}(y))^2}{|x-y|^{1+2\alpha}}dy - (\underline{\phi}_{i,\alpha}(x))^2 dx }{\int_{\Omega_i} (\underline{\phi}_{i,\alpha}(x)) ^2 dx} + 2 \frac{ \int_{\mathbb{R} \backslash \Omega_i}\int_{\Omega_i} \frac{(\underline{\phi}_{i,\alpha}(y))^2}{|x-y|^{1+2\alpha}}dy  dx }{\int_{\Omega_i} (\underline{\phi}_{i,\alpha}(x))^2 dx}.
\end{align*}
We continue in the same way by rewriting $\lambda_{1,2,\mu,\alpha}$:
\begin{align*}
\lambda_\alpha (\Omega_{1,2,\mu }) =&  \frac{ \int_{\mathbb{R}}\int_\mathbb{R} \frac{(\phi_{1,2,\mu,\alpha}(x) - \phi_{1,2,\mu,\alpha}(y))^2}{|x-y|^{1+2\alpha}}dy - (\phi_{1,2,\mu,\alpha}(x))^2 dx }{\int_{\Omega_{1,2,\mu}} (\phi_{1,2,\mu,\alpha}(x))^2dx} \\
=&  \frac{\int_{\Omega_{1,2,\mu}}\int_\mathbb{R} \frac{(\phi_{1,2,\mu,\alpha}(x) -\phi_{1,2,\mu,\alpha}(y))^2}{|x-y|^{1+2\alpha}}dy - (\phi_{1,2,\mu,\alpha}(x))^2 dx }{\int_{\Omega_{1,2,\mu}} (\phi_{1,2,\mu,\alpha}(x))^2dx}\\
&+ \frac{ \int_{\mathbb{R} \backslash \Omega_{1,2,\mu}}\int_\mathbb{R} \frac{(\phi_{1,2,\mu,\alpha}(x) - \phi_{1,2,\mu,\alpha}(y))^2}{|x-y|^{1+2\alpha}}dydx}{\int_{\Omega_{1,2,\mu}} (\phi_{1,2,\mu,\alpha}(x))^2dx}.
\end{align*}
Thus, we have found:
\begin{equation}\label{pf_thm_2_lambda}
\lambda_\alpha (\Omega_{1,2,\mu })=  \frac{1}{\int_{\Omega_{1,2,\mu}} (\phi_{1,2,\mu,\alpha}(x))^2dx} \left( I_1+I_2 \right).
\end{equation}
We rewrite $I_1$ and $I_2$ in order to involving the expression of $\underline{\lambda}_{1,\alpha}$ and $\underline{\lambda}_{2,\alpha}$.
We begin by rewriting $I_1$:

\begin{align*}
I_1 &= \int_{\Omega_{1,2,\mu}}\int_\mathbb{R} \frac{(\phi_{1,2,\mu,\alpha}(x) - \phi_{1,2,\mu,\alpha}(y))^2}{|x-y|^{1+2\alpha}}dy - (\phi_{1,2,\mu,\alpha}(x))^2 dx \\ 
&= \int_{\Omega_1} \int_\mathbb{R} \frac{(\phi_{1,2,\mu,\alpha}(x) - \phi_{1,2,\mu,\alpha}(y))^2}{|x-y|^{1+2\alpha}}dy - (\phi_{1,2,\mu,\alpha}(x))^2 dx \\
& + \int_{\Omega_2} \int_\mathbb{R} \frac{(\phi_{1,2,\mu,\alpha}(x) - \phi_{1,2,\mu,\alpha}(y))^2}{|x-y|^{1+2\alpha}}dy - (\phi_{1,2,\mu,\alpha}(x))^2 dx \\
&=  \int_{\Omega_1} \int_{\Omega_1} \frac{(\phi_{1,2,\mu,\alpha}(x) - \phi_{1,2,\mu,\alpha}(y))^2}{|x-y|^{1+2\alpha}}dy - (\phi_{1,2,\mu,\alpha}(x))^2 dx   + \int_{\Omega_1} \int_{\mathbb{R} \backslash \Omega_{1,2,\mu}} \frac{(\phi_{1,2,\mu,\alpha}(x) )^2}{|x-y|^{1+2\alpha}}dy dx \\
&+ \int_{\Omega_2} \int_{\Omega_2} \frac{(\phi_{1,2,\mu,\alpha}(x) - \phi_{1,2,\mu,\alpha}(y))^2}{|x-y|^{1+2\alpha}}dy - (\phi_{1,2,\mu,\alpha}(x))^2 dx   + \int_{\Omega_2} \int_{\mathbb{R} \backslash \Omega_{1,2,\mu}} \frac{(\phi_{1,2,\mu,\alpha}(x) )^2}{|x-y|^{1+2\alpha}}dydx \\
&+2 \int_{\Omega_1} \int_{\Omega_2} \frac{(\phi_{1,2,\mu,\alpha}(x) - \phi_{1,2,\mu,\alpha}(y))^2}{|x-y|^{1+2\alpha}}dydx \\
&= \int_{\Omega_1} \int_{\Omega_1} \frac{(\phi_{1,2,\mu,\alpha}^1(x) - \phi_{1,2,\mu,\alpha}^1(y))^2}{|x-y|^{1+2\alpha}}dy - (\phi_{1,2,\mu,\alpha}^1(x))^2 dx   + \int_{\Omega_1} \int_{\mathbb{R} \backslash \Omega_1} \frac{(\phi_{1,2,\mu,\alpha}^1(x) )^2}{|x-y|^{1+2\alpha}}dydx \\
&+ \int_{\Omega_2} \int_{\Omega_2} \frac{(\phi_{1,2,\mu,\alpha}^2(x) - \phi_{1,2,\mu,\alpha}^2(y))^2}{|x-y|^{1+2\alpha}}dy - (\phi_{1,2,\mu,\alpha}^2(x))^2 dx   + \int_{\Omega_2} \int_{\mathbb{R} \backslash \Omega_2} \frac{(\phi_{1,2,\mu,\alpha}^2(x) )^2}{|x-y|^{1+2\alpha}}dy dx\\
&+2 \int_{\Omega_1} \int_{\Omega_2} \frac{(\phi_{1,2,\mu,\alpha}(x) - \phi_{1,2,\mu,\alpha}(y))^2}{|x-y|^{1+2\alpha}}dydx  \\
&- \int_{\Omega_1} \int_{\Omega_2}   \frac{(\phi_{1,2,\mu,\alpha}^1(x))^2}{|x-y|^{1+2\alpha}} dydx - \int_{\Omega_2} \int_{\Omega_1}   \frac{(\phi_{1,2,\mu,\alpha}^2(x))^2}{|x-y|^{1+2\alpha}} dydx.
\end{align*}
Finally, we find that 

\begin{equation}\label{pf_thm_2_i1}
\begin{aligned}
I_1& = \int_{\Omega_1} \int_{\Omega_1} \frac{(\phi_{1,2,\mu,\alpha}^1(x) - \phi_{1,2,\mu,\alpha}^1(y))^2}{|x-y|^{1+2\alpha}}dy - (\phi_{1,2,\mu,\alpha}^1(x))^2 dx   + \int_{\Omega_1} \int_{\mathbb{R} \backslash \Omega_1} \frac{(\phi_{1,2,\mu,\alpha}^1(x) )^2}{|x-y|^{1+2\alpha}}dydx \\
&+ \int_{\Omega_2} \int_{\Omega_2} \frac{(\phi_{1,2,\mu,\alpha}^2(x) - \phi_{1,2,\mu,\alpha}^2(y))^2}{|x-y|^{1+2\alpha}}dy - (\phi_{1,2,\mu,\alpha}^2(x))^2 dx   + \int_{\Omega_2} \int_{\mathbb{R} \backslash \Omega_2} \frac{(\phi_{1,2,\mu,\alpha}^2(x) )^2}{|x-y|^{1+2\alpha}}dydx \\
&+2 \int_{\Omega_1} \int_{\Omega_2} \frac{(\phi_{1,2,\mu,\alpha}(x) - \phi_{1,2,\mu,\alpha}(y))^2}{|x-y|^{1+2\alpha}}dydx  \\
&- \int_{\Omega_2} \int_{\Omega_1}   \frac{(\phi_{1,2,\mu,\alpha}^2(x))^2}{|x-y|^{1+2\alpha}} dydx - \int_{\Omega_1} \int_{\Omega_2}   \frac{(\phi_{1,2,\mu,\alpha}^1(x))^2}{|x-y|^{1+2\alpha}} dydx.
\end{aligned}
\end{equation}
With similar computations, we find that for $I_2$:
\begin{equation}\label{pf_thm_2_i2}
\begin{aligned}
I_2 &=  \int_{\Omega_1} \int_{\mathbb{R} \backslash \Omega_1}  \frac{(\phi_{1,2,\mu,\alpha}^1(y))^2}{|x-y|^{1+2\alpha}}dxdy +\int_{\Omega_2} \int_{\mathbb{R} \backslash \Omega_2}  \frac{(\phi_{1,2,\mu,\alpha}^2(y))^2}{|x-y|^{1+2\alpha}}dxdy \\
& -  \int_{\Omega_1} \int_{\Omega_2} \frac{(\phi_{1,2,\mu,\alpha}^1(y))^2}{|x-y|^{1+2\alpha}}dxdy  - \int_{\Omega_2} \int_{\Omega_1} \frac{(\phi_{1,2,\mu,\alpha}^2(y))^2}{|x-y|^{1+2\alpha}}dxdy .
\end{aligned}
\end{equation}
Combining \eqref{pf_thm_2_i1} and \eqref{pf_thm_2_i2} in \eqref{pf_thm_2_lambda}, we deduce 
\begin{align*}
&\lambda_\alpha (\Omega_{1,2,\mu }) = \frac{2  \int_{\Omega_1} \int_{\Omega_2} \frac{(\phi_{1,2,\mu,\alpha}(x) -\phi_{1,2,\mu,\alpha}(y))^2}{|x-y|^{1+2\alpha}}dydx  }{\int_{\Omega_{1,2,\mu}} (\phi_{1,2,\mu,\alpha}(x))^2 dx} \\
&+ \frac{\int_{\Omega_1} (\phi_{1,2,\mu,\alpha}^1(x))^2 dx   \left(\frac{\int_{\Omega_1} \int_{\Omega_1} \frac{(\phi_{1,2,\mu,\alpha}^1(x) - \phi_{1,2,\mu,\alpha}^1(y))^2}{|x-y|^{1+2\alpha}}dy - (\phi_{1,2,\mu,\alpha}^1(x))^2 dx   + 2  \int_{\Omega_1} \int_{\mathbb{R} \backslash \Omega_1} \frac{(\phi_{1,2,\mu,\alpha}^1(x) )^2}{|x-y|^{1+2\alpha}}dydx}{\int_{\Omega_1} (\phi_{1,2,\mu,\alpha}^1(x))^2 dx } \right)}{\int_{\Omega_{1,2,\mu}} (\phi_{1,2,\mu,\alpha}(x))^2 dx}\\
&+ \frac{\int_{\Omega_2} (\phi_{1,2,\mu,\alpha}^2(x))^2 dx   \left(\frac{\int_{\Omega_2} \int_{\Omega_2} \frac{(\phi_{1,2,\mu,\alpha}^2(x) - \phi_{1,2,\mu}^2(y))^2}{|x-y|^{1+2\alpha}}dy - (\phi_{1,2,\mu,\alpha}^2(x))^2 dx   + 2  \int_{\Omega_2} \int_{\mathbb{R} \backslash \Omega_2} \frac{(\phi_{1,2,\mu,\alpha}^2(x) )^2}{|x-y|^{1+2\alpha}}dydx}{\int_{\Omega_2} (\phi_{1,2,\mu,\alpha}^2(x))^2 dx } \right)}{\int_{\Omega} (\phi_{1,2,\mu,\alpha}(x))^2 dx} \\ 
&-\frac{  2 \int_{\Omega_1} \int_{\Omega_2}   \frac{(\phi_{1,2,\mu,\alpha}^1(x))^2}{|x-y|^{1+2\alpha}} dydx + 2\int_{\Omega_2} \int_{\Omega_1}   \frac{(\phi_{1,2,\mu,\alpha}^2(x))^2}{|x-y|^{1+2\alpha}} dydx }{\int_{\Omega_{1,2,\mu}} (\phi_{1,2,\mu,\alpha}(x))^2 dx} \\
&\geq \frac{\int_{\Omega_1} (\phi_{1,2,\mu,\alpha}^1(x))^2 dx }{\int_{\Omega_{1,2,\mu}} (\phi_{1,2,\mu,\alpha}(x))^2 dx}  \underline{\lambda}_{1,\alpha} + \frac{\int_{\Omega_2} (\phi_{1,2,\mu,\alpha}^2(x))^2 dx }{\int_{\Omega_{1,2,\mu}} (\phi_{1,2,\mu,\alpha}(x))^2 dx} \underline{\lambda}_{2,\alpha} \\
\ & \ -\frac{ 2\int_{\Omega_1} \int_{\Omega_2}   \frac{(\phi_{1,2,\mu,\alpha}^1(x))^2}{|x-y|^{1+2\alpha}} dydx + 2\int_{\Omega_2} \int_{\Omega_1}   \frac{(\phi_{1,2,\mu,\alpha}^2(x))^2}{|x-y|^{1+2\alpha}} dydx}{\int_{\Omega_{1,2,\mu}} (\phi_{1,2,\mu,\alpha}(x))^2 dx} \\
&\geq \min(\underline{\lambda}_{1,\alpha}, \underline{\lambda}_{2,\alpha}) -\frac{ 2\int_{\Omega_1} \int_{\Omega_2}   \frac{(\phi_{1,2,\mu,\alpha}^1(x))^2}{|x-y|^{1+2\alpha}} dydx + 2\int_{\Omega_2} \int_{\Omega_1}   \frac{(\phi_{1,2,\mu,\alpha}^2(x))^2}{|x-y|^{1+2\alpha}} dydx}{\int_{\Omega_{1,2,\mu}} (\phi_{1,2,\mu,\alpha}(x))^2 dx} .
\end{align*}
But, for $i \in \left\lbrace 1,2 \right\rbrace$, we have for all $x \in \Omega_i$ and $ y \in \Omega_{3-i}$ 
\[2\mu  < |x-y| \Rightarrow  -\frac{1}{(2\mu)^{1+2\alpha}} \leq -\frac{1}{|x-y|^{1+2\alpha}}.\]
We deduce that
\begin{align*}
\lambda_\alpha (\Omega_{1,2,\mu }) &\geq \min(\underline{\lambda}_{1,\alpha}, \underline{\lambda}_{2,\alpha}) -\frac{ 2\int_{\Omega_1} \int_{\Omega_2}   \frac{(\phi_{1,2,\mu,\alpha}^1(x))^2}{|x-y|^{1+2\alpha}} dydx + 2\int_{\Omega_2} \int_{\Omega_1}   \frac{(\phi_{1,2,\mu,\alpha}^2(x))^2}{|x-y|^{1+2\alpha}} dydx}{\int_{\Omega_{1,2,\mu}} (\phi_{1,2,\mu,\alpha}(x))^2 dx} \\
&\geq \min(\underline{\lambda}_{1,\alpha}, \underline{\lambda}_{2,\alpha}) -\frac{ 2\int_{\Omega_1} \int_{\Omega_2}   \frac{(\phi_{1,2,\mu,\alpha}^1(x))^2}{\mu^{1+2\alpha}} dydx + 2\int_{\Omega_2} \int_{\Omega_1}   \frac{(\phi_{1,2,\mu,\alpha}^2(x))^2}{\mu^{1+2\alpha}} dydx}{\int_{\Omega_{1,2,\mu}} (\phi_{1,2,\mu,\alpha}(x))^2 dx} \\
& \geq \min(\underline{\lambda}_{1,\alpha}, \underline{\lambda}_{2,\alpha}) - \frac{4A}{(2\mu)^{1+2\alpha}} \underset{\mu \rightarrow + \infty}\longrightarrow \min(\underline{\lambda}_{1,\alpha}, \underline{\lambda}_{2,\alpha}). 
\end{align*}
Since $\min(\underline{\lambda}_{1,\alpha}, \underline{\lambda}_{2,\alpha})>0$, we deduce the existence of $\mu_1>0$ such that for all $\mu > \mu_1$, 
\[\lambda_\alpha (\Omega_{1,2,\mu }) > 0.\]
\end{proof}

\bibliographystyle{plain} 
{\footnotesize
\bibliography{Biblio}}

\begin{thebibliography}{10}

\bibitem{Berestycki81}
H.~Berestycki.
\newblock Le nombre de solutions de certains probl\`emes semi-lin\'{e}aires
  elliptiques.
\newblock {\em J. Funct. Anal.}, 40(1):1--29, 1981.

\bibitem{BerCovVo}
H.~Berestycki, J.~Coville, and H.-H. Vo.
\newblock Persistence criteria for populations with non-local dispersion.
\newblock {\em J. Math. Biol.}, 72(7):1693--1745, 2016.

\bibitem{BerRossi}
H.~Berestycki and L.~Rossi.
\newblock Generalizations and properties of the principal eigenvalue of
  elliptic operators in unbounded domains.
\newblock {\em Comm. Pure Appl. Math.}, 68(6):1014--1065, 2015.

\bibitem{Bogdan2}
K.~Bogdan.
\newblock The boundary {H}arnack principle for the fractional {L}aplacian.
\newblock {\em Studia Math.}, 123(1):43--80, 1997.

\bibitem{Bra}
J.~Brasseur.
\newblock On the role of the range of dispersal in a nonlocal {F}isher-{KPP}
  equation: an asymptotic analysis.
\newblock {\em ArxiV.}, 2019.

\bibitem{Caffarelli:Silvestre:Extension}
L.~Caffarelli and L.~Silvestre.
\newblock An extension problem related to the fractional {L}aplacian.
\newblock {\em Comm. Partial Differential Equations}, 32(7-9):1245--1260, 2007.

\bibitem{Caffarelli:Silvestre:Regularity}
L.~Caffarelli and L.~Silvestre.
\newblock Regularity theory for fully nonlinear integro-differential equations.
\newblock {\em Comm. Pure Appl. Math.}, 62(5):597--638, 2009.

\bibitem{Hitchhiker}
E.~Di~Nezza, G.~Palatucci, and E.~Valdinoci.
\newblock Hitchhiker's guide to the fractional {S}obolev spaces.
\newblock {\em Bull. Sci. Math.}, 136(5):521--573, 2012.

\bibitem{Fisher}
R.~A. Fisher.
\newblock The wave of advance of advantageous genes.
\newblock {\em Annals of Eugenics}, 7(4):355--369, 1937.

\bibitem{Hopf_lemma_Greco}
A.~Greco and R.~Servadei.
\newblock Hopf's lemma and constrained radial symmetry for the fractional
  {L}aplacian.
\newblock {\em Math. Res. Lett.}, 23(3):863--885, 2016.

\bibitem{Potential_theory}
J.~Heinonen, T.~Kilpel\"{a}inen, and O.~Martio.
\newblock {\em Nonlinear potential theory of degenerate elliptic equations}.
\newblock Oxford Mathematical Monographs. The Clarendon Press, Oxford
  University Press, New York, 1993.
\newblock Oxford Science Publications.

\bibitem{KPP1937}
A.~Kolmogorov, I.~Petrovskii, and N.~Piscunov.
\newblock A study of the equation of diffusion with increase in the quantity of
  matter, and its application to a biological problem.
\newblock {\em Byul. Moskovskogo Gos. Univ.}, 1(6):1--25, 1938.

\bibitem{papier2}
A.~L{\'e}culier, S.~Mirrahimi, and J.-M. Roquejoffre.
\newblock Propagation in a fractional reaction--diffusion equation in a
  periodically hostile environment.
\newblock {\em Journal of Dynamics and Differential Equations}, pages 1--28,
  2020.

\bibitem{Ros-Oton-Serra}
X.~Ros-Oton and J.~Serra.
\newblock The {D}irichlet problem for the fractional {L}aplacian: regularity up
  to the boundary.
\newblock {\em J. Math. Pures Appl. (9)}, 101(3):275--302, 2014.

\bibitem{Serrin}
J.~Serrin.
\newblock Removable singularities of solutions of elliptic equations.
\newblock {\em Arch. Rational Mech. Anal.}, 17:67--78, 1964.

\bibitem{Smoller}
J.~Smoller.
\newblock {\em Shock waves and reaction-diffusion equations}, volume 258 of
  {\em Grundlehren der Mathematischen Wissenschaften [Fundamental Principles of
  Mathematical Science]}.
\newblock Springer-Verlag, New York-Berlin, 1983.

\bibitem{tartar}
L.~Tartar.
\newblock {\em An introduction to Sobolev spaces and interpolation spaces}.
\newblock Springer, Berlin; Heidelberg, 2010.

\end{thebibliography}
	

\end{document}